\documentclass{amsart}
\usepackage{color}
\usepackage{enumerate}
\usepackage{amsthm,amssymb}
\usepackage{amsmath}
\usepackage{graphicx,epsfig}
\usepackage{hyperref}

\parindent1em
\sloppy  \textwidth=15cm \textheight=22.5cm
\oddsidemargin=0cm \evensidemargin=0cm

\allowdisplaybreaks[1]

\def\R{{\mathbb{R}}}

\newtheorem{theo}{Theorem}[section]
\newtheorem{lemma}[theo]{Lemma}
\newtheorem{prop}[theo]{Proposition}
\newtheorem{cor}[theo]{Corollary}

\newtheorem{hyp}[theo]{Hypothesis}
\theoremstyle{definition}

\newtheorem{rem}[theo]{Remark}
\newtheorem{defi}[theo]{Definition}

\def\calH{{\mathcal{H}}}

\def\R{\mathbb R}

\def\N{\mathbb N}

\newcommand{\norm}[1]{\left\Vert#1\right\Vert}
\newcommand{\abs}[1]{\left\vert#1\right\vert}

\let\div\undefined
\DeclareMathOperator{\div}{\mathrm{div}}

\newcommand{\bpr}{\begin{proof}}  
\newcommand{\epr}{\end{proof}}

\begin{document}

\numberwithin{equation}{section}

\title[Bounds for the gradient of transition kernels]{Bounds for the gradient of the transition kernel for elliptic operators with unbounded diffusion, drift and potential term}
\author{Markus Kunze, Marianna Porfido and Abdelaziz Rhandi}
\address{M. Kunze: Fachbereich Mathematik und Statistik, Universit\"at Konstanz, 78457 Konstanz, Germany}
\address{M. Porfido, A. Rhandi: Dipartimento di Matematica, Università degli Studi di Salerno, Via Giovanni Paolo II, 132, 84084 Fisciano (Sa), Italy}
\thanks{The second and the third authors are members of the
Gruppo Nazionale per l'Analisi Matematica, la Probabilità e le loro Applicazioni (GNAMPA) of
the Istituto Nazionale di Alta Matematica (INdAM). This article is based upon work
from COST Action 18232 MAT-DYN-NET, supported by COST (European Cooperation in Science
and Technology), www.cost.eu}
\vskip 0.5cm


\begin{abstract}
We prove global Sobolev regularity and pointwise upper bounds for the gradient of transition
densities associated with second order differential operators in $\R^d$ with unbounded diffusion, drift and potential terms.
\end{abstract}

\maketitle

\section{Introduction}
In this article, we are concerned with elliptic operators of the form
\begin{equation}\label{eq.div}
A\varphi ={\rm div}(Q\nabla \varphi)+F\cdot\nabla\varphi-V\varphi , \quad \varphi \in C^2(\R^d),
\end{equation}
where the diffusion coefficients $Q$, the drift $F$ and the potential $V$ are typically unbounded functions. Moreover, we write $A_0$ for the operator $A+V$. Throughout, we make the following assumptions on $Q$, $F$ and $V$.

\begin{hyp}\label{h.1}
We have $Q=(q_{ij})_{i,j=1, \ldots, d} \in C_{\mathrm{loc}}^{1+\zeta}(\R^d; \R^{d\times d})$, $F=(F_j)_{j=1, \ldots, d} \in C_{\mathrm{loc}}^{1+\zeta}(\R^d;\R^d)$ and $0\leq V \in C_{\mathrm{loc}}^\zeta(\R^d)$ for some $\zeta \in (0,1)$. Moreover, 
\begin{enumerate}
\item The matrix $Q$ is symmetric and uniformly elliptic, i.e. there is $\eta>0$ such that
\[
\sum_{i,j=1}^dq_{ij}(x)\xi_i\xi_j\ge \eta |\xi|^2 \quad \hbox{for all }x,\,\xi\in \R^d;
\]
\item there is $0\le Z\in C^2(\R^d)$ and a constant $M\ge 0$ such that $\lim_{|x|\to \infty}Z(x)=\infty$, $AZ(x)\le M$ and $\eta\Delta Z(x)+ F\cdot\nabla Z(x)-V(x)Z(x)\leq M$ for all $x\in \R^d$.
\item there is $0\le Z_0\in C^2(\R^d)$ and a constant $M\ge 0$ such that $\lim_{|x|\to \infty}Z_0(x)=\infty$, $A_0Z_0(x)\le M$ and $\eta\Delta Z_0(x)+ F\cdot\nabla Z_0(x)\leq M$ for all $x\in \R^d$.
\end{enumerate}
\end{hyp}

We immediately see that $(c)$ implies $(b)$, but in the applications will be useful to distinguish between $Z$ and $Z_0$.

It is well known  (see \cite[Theorem 2.2.5]{Lorenzi} and \cite{MPW}) that, assuming Hypothesis \ref{h.1}, 
a suitable realization of the above operator $A$ generates a (typically not strongly continuous) semigroup $T=(T(t))_{t\geq 0}$ on the space $C_b(\R^d)$ that is given through an integral kernel $p$, i.e.\ 
\[
T(t)f(x)=\int_{\R^d}p(t,x,y)f(y)\,dy,\quad t>0,\,x\in \R^d,\,f\in C_b(\R^d),
\]
where the kernel $p$ is positive, $p(t,\cdot ,\cdot)$ and $p(t,x,\cdot)$ are measurable for any $t>0,\,x\in \R^d$, and for a.e. fixed $y\in \R^d,\,p(\cdot ,\cdot ,y)\in C_{\mathrm{loc}}^{1+\frac{\zeta}{2},2+\zeta}((0,\infty)\times \R^d)$.\smallskip

An important aspect in the study of elliptic operators is to have estimates for the kernel $p$ and, consequently, this question has received a lot of attention in the literature. We mention here \cite{ALR10, AMP, BRS06, BS20, KunzeLorenziRhandi1, KunzeLorenziRhandi2, LMPR11, MPR, S13}, where specific operators were considered. We refer also to the manuscript \cite{BKRS15} and the references therein. There is also a general approach to establish estimates for $p$ making use of so called \emph{Lyapunov functions}, initiated in \cite{MPR} and later refined in \cite{ALR10, LMPR11}. 
We point out that these results are assuming the diffusion coefficients $Q$ to be uniformly bounded. However, using an approximation procedure, these results were subsequently extended to unbounded diffusion coefficients, see \cite{KunzeLorenziRhandi1, KunzeLorenziRhandi2}.\smallskip

In this article, we are concerned to establish not only estimates for $p$ but also for $\nabla p$, the gradient of $p$.  An important tool to obtain such estimates is the square integrability of the logarithmic gradient of $p$. Such integrability property plays an important role to obtain regularity results for $p$, cf. \cite[Section 7.4]{BKRS15}. Moreover, as in \cite{MPR}, once estimates for $\nabla p$ are obtained, one can repeat the same procedure to get estimates for $D^2p$ and hence estimates for $\partial_tp$. This allows us to obtain the differentiability of the semigroup $T(\cdot)$. 

Estimates for the gradient of $p$ were obtained in \cite[Section 5]{MPR} in the case of bounded diffusion coefficients. As in \cite{KunzeLorenziRhandi1, KunzeLorenziRhandi2}, we use approximation to extend this to unbounded diffusion coefficients.
We point out that the constant in the estimate for $\nabla p$ obtained in \cite[Thm.\ 5.3]{MPR} depend on $\norm{Q}_\infty$ and thus this estimate cannot be used in an approximation result. Therefore, we establish in Theorem \ref{Thm: weighted-gradient estimate bounded case} an estimate for $\nabla p$ in the case of bounded diffusion coefficients where the constant in the estimate does not depend on $\norm{Q}_\infty$.

With this estimate at hand, we can then tackle the case of unbounded diffusion coefficients by approximating them with bounded ones (see Section \ref{s.approx}). In this way, we can prove our main result Theorem \ref{Thm: weighted-gradient estimate} which provides an estimate of
$\nabla p$ in the general case. We illustrate our results by applying them to the prototype operator
\begin{equation}\label{eq.prototype}
\mathrm{div}\big((1+|x|_*^m)\nabla u\big) - |x|^{p-1}x\cdot\nabla u - |x|^s,
\end{equation}
where $x\mapsto |x|_*$ is a $C^2$-function satisfying $|x|_*=|x|$ for $|x|\geq 1$.

\subsection*{Notation}
$B_r$ denotes the open ball of $\R^d$ of radius $r$ and center $0$. For $0\leq a<b$, we write $Q(a,b)$ for $(a,b)\times \R^d$.

If $u: J\times \R^d\to \R$, where $J\subset [0,\infty)$ is an
interval, we use the following notation:
\begin{align*}
\partial_t u =&\frac{\partial u}{\partial t}, \
D_iu=\frac{\partial u}{\partial x_i}, \
D_{ij}u=D_iD_ju
\\
\nabla u=&(D_1u, \dots, D_du), \ {\div}(F)=\sum_{i=1}^dD_iF_i \hbox{\ for }F:\R^d\to \R^d,
\end{align*}
and
$$
|\nabla u|^2=\sum_{j=1}^d |D_j u|^2 , \qquad
|D^2u|^2=\sum_{i,j=1}^d|D_{ij} u|^2.
$$
Let us come to notation for function spaces.  $C_b(\R^d)$ is the
space of bounded and continuous functions in $\R^d$.  $\mathcal{D}(\R^d)$ is the space
of test functions. $C^\alpha(\R^d)$ denotes the space of all
$\alpha$-H\"older continuous functions on $\R^d$. $C^{1,2}(Q(a,b))$ is the space of all functions $u$ such that $\partial_tu$, $D_iu$ and $D_{ij}u$ are continuous in $Q(a,b)$. 
We also introduce
the space
$$C_c^{1,2}(Q(a,b))=\{\phi \in C^{1,2}(\overline{Q(a,b)}):{\rm
supp}\phi \subset [a,b]\times B_R \hbox{\ for some }R>0\}.$$ Notice
that we are not requiring that $u\in C_c^{1,2}(Q(a,b))$ vanishes at
$t=a,\,t=b$.

 For $\Omega \subseteq \R^d,\,1\le k\le \infty,\,j\in \N,\,W^{j}_{k}(\Omega)$
denotes the classical Sobolev space of all $L^k$--functions having
weak derivatives in $L^k(\Omega)$ up to the order $j$.  Its usual
norm is denoted by $\|\cdot \|_{j,k}$ and by $\|\cdot \|_k$ when
$j=0$. When $k=2$ we set $H^j(\Omega):=W^{j}_{2}(\Omega)$ and $H_0^1(\Omega)$ denotes the closure of 
the set of test functions on $\Omega$ with respect to the norm of $H^1(\Omega)$.

For $0<\alpha \le 1$ we denote by
$C^{1+\alpha/2 ,2+\alpha}(Q(a,b))$ the space of all
functions $u$ such that $\partial_tu$, $D_iu$ and $D_{ij}u$ are 
$\alpha$-H\"older continuous in $Q(a,b)$ with respect to
the parabolic distance $d((t,x),(s,y)):=|x-y|+|t-s|^{\frac{1}{2}}$.

Local H\"older spaces are defined, as usual, requiring that the
H\"older condition holds in every compact subset.

We shall also use parabolic Sobolev spaces.  We denote by
$W_k^{1,2}(Q(a,b))$ the space of functions $u\in L^k(Q(a,b))$
having weak space derivatives $D_i^\alpha u\in L^k(Q(a,b))$ for
$|\alpha|\le 2$ and weak time derivative $\partial_t u\in
L^k(Q(a,b))$ equipped with the norm
$$
\|u\|_{W_k^{1,2}(Q(a,b))}:=\|u\|_{L^k(Q(a,b))}
+\|\partial_t u\|_{L^k(Q(a,b))}
+\sum_{1\le |\alpha |\le 2}\|D^\alpha u\|_{L^k(Q(a,b))}.
$$
Let $\calH^{k,1}(Q(a,b))$ denote a space of all functions $u\in W_k^{0,1}(Q(a,b))$ with $\partial_t u\in (W_{k'}^{0,1}(Q(a,b)))'$, the dual space of $W_{k'}^{0,1}(Q(a,b))$, endowed with the norm
\[
\norm{u}_{\calH^{k,1}(Q(a,b))} := \norm{\partial_t u}_{(W_{k'}^{0,1}(Q(a,b)))'}+ \norm{u}_{W_k^{0,1}(Q(a,b))},
\]
where $1/k+1/k'=1$.

\section{Results for bounded diffusion coefficients}

Throughout this section we assume that the coefficients $q_{ij}$ and their spatial derivatives $D_kq_{ij}$ are bounded on $\R^d$ for all $i,j,k=1,\dots, d$.

As in \cite{ALR10}, \cite{Spina08} and \cite{KunzeLorenziRhandi2}, we introduce time dependent Lyapunov functions for $L:=\partial_t+A$.

\begin{defi}
We say that a function $W:[0,T]\times \R^d\to [0,\infty)$ is a \emph{time dependent Lyapunov function for} $L$ if $W\in C^{1,2}((0,T)\times \R^d)\cap C([0,T]\times \R^d)$ such that $\lim_{|x|\to \infty}W(t,x)=\infty$ uniformly for $t$ in compact subsets of $(0,T],\,W\le Z$ and there is $h\in [0,T]\to [0,\infty)$ integrable near $0$ such that
\begin{equation}\label{eq1: definition time dependent Lyapunov functions}
LW(t,x)\le h(t)W(t,x)
\end{equation}
and 
\begin{equation}\label{eq2: definition time dependent Lyapunov functions}
\partial_t W(t,x)+\eta\Delta W(t,x)+ F(x)\cdot\nabla W(t,x)-V(x)W(t,x)\leq h(t)W(t,x)
\end{equation}
for all $(t,x)\in (0,T)\times \R^d$. 
To emphasize the dependence on $Z$ and $h$, we also say that $W$ is a \emph{time dependent Lyapunov function for $L$ with respect to $Z$ and $h$}.
\end{defi}

Moreover, given a time dependent Lyapunov function $W$ for $L$, we define the function
$$\xi_W (t,x):=\int_{\R^d}p(t,x,y)W(t,y)\,dy.$$
This is finite due to the following result which is true also for possibly unbounded diffusion coefficients.

\begin{prop}\label{Prop: Time dependent Lyapunov functions are integrable with respect to pdy}
If $W$ is a time dependent Lyapunov function for $L$ with respect to $h$, then for $\xi_W (t,x):=\int_{\R^d}p(t,x,y)W(t,y)\,dy$, we have
\[
\xi_W (t,x)\le e^{\int_0^th(s)\,ds}W(0,x),\quad \forall (t,x)\in [0,T]\times \R^d.\]
\end{prop}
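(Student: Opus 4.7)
The plan is to approximate the minimal semigroup $T$ by Dirichlet semigroups on balls and then run a Gronwall argument, transferring the Lyapunov inequality $LW \le hW$ to an inequality for $\xi_W$ via integration by parts.

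First I would fix $n \in \N$ and consider the Dirichlet realization $A_n$ of $A$ on $B_n$ together with its transition kernel $p_n(s,x,y)$ on $(0,T] \times B_n \times B_n$. Under Hypothesis~\ref{h.1}, standard parabolic theory gives that $p_n$ is smooth in the interior, satisfies $p_n(s,x,y) = 0$ whenever $y \in \partial B_n$, and solves the forward Kolmogorov equation $\partial_s p_n = A^*_y p_n$, with $A^*\varphi = \div(Q\nabla\varphi) - \div(F\varphi) - V\varphi$. It is classical that $p_n \nearrow p$ pointwise as $n \to \infty$ (minimal semigroup construction).

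Next I would study the auxiliary function
\[
z_n(s,x) := \int_{B_n} p_n(s,x,y)\,W(s,y)\,dy,\qquad (s,x) \in (0,T] \times B_n.
\]
Since $W \in C^{1,2}((0,T)\times \R^d)\cap C([0,T]\times \R^d)$ and the coefficients are bounded on $\overline{B_n}$, I can differentiate under the integral and integrate by parts. Using $\partial_s p_n = A^*_y p_n$ and $p_n|_{\partial B_n} = 0$, almost all boundary contributions vanish and one is left with
\[
\int_{\partial B_n} W(s,y)\,\bigl(Q(y)\nabla_y p_n(s,x,y)\bigr)\cdot \nu(y)\,d\sigma(y).
\]
On $\partial B_n$, the tangential derivatives of $p_n(s,x,\cdot)$ vanish, so $\nabla_y p_n = -c\,\nu$ with $c \ge 0$ by Hopf's lemma; ellipticity of $Q$ then gives $Q\nabla_y p_n \cdot \nu \le 0$, and since $W\ge 0$ this boundary term is non-positive. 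Combining this with $\partial_s W + AW \le hW$ yields
\[
\partial_s z_n(s,x) \le \int_{B_n} p_n(s,x,y)\bigl[\partial_s W(s,y) + AW(s,y)\bigr]\,dy \le h(s)\,z_n(s,x).
\]
Gronwall, together with the initial condition $\lim_{s\to 0^+} z_n(s,x) = W(0,x)$ (which follows from continuity of $W$ on $[0,T]\times \overline{B_n}$ and strong continuity of the Dirichlet semigroup at interior points), gives $z_n(s,x) \le e^{\int_0^s h(r)\,dr}\,W(0,x)$ for every $x \in B_n$.

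Finally I would let $n\to\infty$: since $0 \le p_n \nearrow p$ and $W \ge 0$, monotone convergence yields $z_n(s,x)\nearrow \xi_W(s,x)$, which proves the bound. The main obstacle is the middle step, namely a careful justification of the differentiation under the integral and the integration by parts on $B_n$, including the Hopf-type sign argument for the boundary contribution; once those parabolic-regularity issues are handled, the rest is a Gronwall inequality followed by monotone convergence.
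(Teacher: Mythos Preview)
Your approach is correct and is precisely the standard argument for such Lyapunov-type kernel estimates: the paper itself gives no details and simply refers to \cite[Proposition~12.1]{KunzeLorenziRhandi1}, whose proof proceeds along the same lines (approximation by Dirichlet semigroups on balls, differentiation of $z_n$ using the forward equation, control of the boundary term via the Hopf-type sign argument, Gronwall, and monotone convergence as $n\to\infty$). The technical points you flag---justifying differentiation under the integral and the integration by parts on $B_n$---are exactly the places where care is needed, and your outline handles them correctly.
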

\begin{proof}
The proof is similar to the one given in \cite[Proposition 12.1]{KunzeLorenziRhandi1}.
\end{proof}

Making use of time dependent Lyapunov functions, we start by establishing pointwise upper bounds for the kernel $p$.
The following result can be deduced as in \cite[Theorem 12.4]{KunzeLorenziRhandi1} and \cite[Theorem 4.2]{KunzeLorenziRhandi2}.

\begin{theo}\label{theo-estimate bounded case}
Fix $T>0,\,x\in \R^d$ and $0<a_0<a<b<b_0<T$. Let us consider two time dependent Lyapunov functions $1\le W_1,\,W_2$ with $W_1\le W_2$ and a weight function $1\le w\in C^{1,2}((0,T)\times\R^d)$ such that 
\begin{enumerate}
\item the functions $w^{-2}\partial_t w$ and  $w^{-2}\nabla w$ are bounded on $Q(a_0,b_0)$;
\item there exist $k>d+2$ and constants $c_1,\ldots ,c_6\geq 1$, possibly depending on the interval $(a_0,b_0)$, with
\begin{tabbing}
\= (i) $w\le c_1w^{\frac{k-2}{k}}W_1^{\frac{2}{k}}$,
\qquad \= (ii) $|Q\nabla w|\le c_2w^{\frac{k-1}{k}}W_1^{\frac{1}{k}}$,
\qquad \= (iii) $|{\rm div}(Q\nabla w)|\le c_3w^{\frac{k-2}{k}}W_1^{\frac{2}{k}}$,\\[0.5em]
\> (iv) $|\partial_t w|\leq c_4 w^\frac{k-2}{k} W_1^\frac{2}{k}$,
\>(v) $V^{\frac{1}{2}}\le c_5w^{-\frac{1}{k}}W_2^{\frac{1}{k}}$,
\> (vi) $|F|\leq c_6 w^{-\frac{1}{k}}W_2^{\frac{1}{k}}$,
\end{tabbing}
on $[a_0,b_0]\times \R^d$.
\end{enumerate}
Then there is a constant $C>0$ depending only on  $d,\,k$ and $\eta$ such that
\begin{align}\label{Estimate for wp in the bounded case under h.2}
w(t,y)p(t,x,y) \leq C\Bigg[&c_1^{\frac{k}{2}}\sup_{t\in (a_0,b_0)}\xi_{W_1}(t,x)+\left(c_2^k+\frac{c_1^{\frac{k}{2}}}{(b_0-b)^{\frac{k}{2}}}+c_3^{\frac{k}{2}} +c_4^\frac{k}{2}\right) \int_{a_0}^{b_0}\xi_{W_1} (t,x)\, dt\notag \\
& +\left(c_5^k+c_6^k + c_2^\frac{k}{2} c_6^\frac{k}{2} \,\right)\int_{a_0}^{b_0}\xi_{W_2} (t,x)\, dt\Bigg],
\end{align}
for all $(t,y)\in (a,b)\times \R^d$ and any fixed $x\in \R^d$.
\end{theo}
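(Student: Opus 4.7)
The approach is a Moser-type iteration in the spirit of \cite[Thm.\ 12.4]{KunzeLorenziRhandi1} and \cite[Thm.\ 4.2]{KunzeLorenziRhandi2}, with the extra twist that one must keep the final constant depending only on $d,k,\eta$ and polynomial in $c_1,\ldots,c_6$, with no constant involving $\norm{Q}_\infty$ appearing. The boundedness of $Q$ and of its first derivatives assumed in this section is used only to give a rigorous meaning to the adjoint equation satisfied by $p$ and to justify the integrations by parts below; it plays no role in the final inequality.

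Fix $x\in\R^d$ and set $u(t,y):=p(t,x,y)$. Then $u$ solves classically the dual parabolic equation
\[
\partial_t u - \div(Q\nabla u) + \div(Fu) + Vu = 0 \quad \text{on } Q(0,T).
\]
For an exponent $q\ge k$ and a smooth time cutoff $\chi$ supported in a slightly larger interval than $(a,b)$, I would multiply this equation by $\chi^2 w^q u^{q-1}$ and integrate over $\R^d\times(a_0,t)$. Integration by parts produces on the left a coercive term bounded below by $\eta\int\!\!\int \chi^2 w^q u^{q-2}|\nabla u|^2$, together with a time-boundary term controlling $\sup_{t}\int_{\R^d}\chi^2(wu)^q$, while on the right one finds five families of terms coming from $\partial_t w^q$, $\div(Q\nabla w^q)$, the cross-gradient $Q\nabla w^q\cdot\nabla u^{q-1}$, the drift term $Fu\cdot\nabla(w^q u^{q-1})$, and the potential $Vw^q u^q$. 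Applying Young's inequality to split the two cross-gradient terms and absorbing half of each into the $\eta$-coercive left-hand side, and invoking conditions (ii)--(vi) in the appropriate combinations together with (i) in the form $w\le c_1^{k/2}W_1$, yields integrands pointwise dominated by constants times $(wu)^{q-1}W_1$ or $(wu)^{q-1}W_2$. Integrating against $y$ and using Proposition \ref{Prop: Time dependent Lyapunov functions are integrable with respect to pdy} leads to an energy inequality of the form
\[
\sup_{t\in(a,b)}\int_{\R^d}(wu)^q\, dy + \int_a^b\!\!\int_{\R^d}\big|\nabla(wu)^{q/2}\big|^2\, dy\, ds \le \mathcal{E}_q,
\]
with $\mathcal{E}_q$ polynomial in $c_1,\ldots,c_6$ and linear in $\sup\xi_{W_i}$ and $\int\xi_{W_i}\,dt$; crucially $\mathcal{E}_q$ depends on $Q$ only through the ellipticity constant $\eta$.

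The final step is a Moser iteration: the parabolic Sobolev embedding $L^\infty_tL^2_y\cap L^2_t H^1_y\hookrightarrow L^{2(d+2)/d}_{t,y}$ upgrades the energy bound for $(wu)^{q/2}$ to an $L^{q(d+2)/d}$-bound for $wu$ on a slightly smaller cylinder. Iterating along a sequence of exponents $q_n=k(1+2/d)^n$ and a shrinking sequence of cylinders interpolating between $(a_0,b_0)$ and $(a,b)$ produces a product of constants that converges geometrically --- here the assumption $k>d+2$ enters, guaranteeing summability of the relevant series --- and passing to the limit $n\to\infty$ gives the pointwise bound \eqref{Estimate for wp in the bounded case under h.2}. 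The hard part is the bookkeeping in the energy step: each of the two cross-gradient terms must be split so that after absorption into the coercive term what remains is controlled purely in terms of $c_2,c_6$ and $\eta$ (and not $\norm{Q}_\infty$), and the precise exponents of the $c_i$'s appearing in \eqref{Estimate for wp in the bounded case under h.2} must be reproduced. This is exactly the distinction from \cite[Thm.\ 5.3]{MPR}, and it is what makes the theorem usable in the approximation argument of Section \ref{s.approx}.
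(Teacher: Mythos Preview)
The paper does not spell out a proof but simply defers to \cite[Thm.\ 12.4]{KunzeLorenziRhandi1} and \cite[Thm.\ 4.2]{KunzeLorenziRhandi2}. The argument in those references is \emph{not} a Moser iteration on powers $(wu)^q$; rather one sets $v:=\vartheta^{k/2}wp$ for a time cutoff $\vartheta$, checks that $v$ solves in the weak sense
\[
\partial_t v - \div(Q\nabla v) = f + \div h
\]
for explicit $f,h$ built out of $p,\nabla p,w,\nabla w,\partial_t w,F,V$, and then invokes a one-shot De~Giorgi/Aronson bound (the precursor of Theorem~\ref{Thm 3.7 KunzeLorenziRhandi}) to obtain $\norm{v}_\infty\le C(\norm{v}_{\infty,2}+\norm{f}_{k/2}+\norm{h}_k)$ with $C=C(d,k,\eta)$. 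Each norm on the right is then estimated via (i)--(vi), and the outcome is an \emph{algebraic} self-referential inequality of the shape
\[
\norm{wp}_\infty\le C\big(\norm{wp}_\infty^{1/2}\,\Gamma_1 + \norm{wp}_\infty^{(k-2)/k}\,\Gamma_2 + \norm{wp}_\infty^{(k-1)/k}\,\Gamma_3\big),
\]
which is solved by Young's inequality. The precise exponents $c_1^{k/2},c_2^k,c_3^{k/2},\ldots$ in \eqref{Estimate for wp in the bounded case under h.2} come directly from computing the $L^{k/2}$- and $L^k$-norms of the individual pieces of $f$ and $h$, and the condition $k>d+2$ is the Morrey-type criticality threshold needed for the black-box $L^\infty$ bound to apply --- it has nothing to do with summability of a Moser product.

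Your energy scheme can in principle be pushed through, but the step where it diverges from a routine Moser argument is exactly the one you gloss over. Your claim that the right-hand side $\mathcal{E}_q$ is ``linear in $\sup\xi_{W_i}$ and $\int\xi_{W_i}\,dt$'' is not correct: the integrand $V(wu)^q$, for instance, is \emph{not} pointwise dominated by $C(wu)^{q-1}W_2$ (that would require $Vwu\le CW_2$, which is not available), and after the honest estimates every error term carries a factor $\norm{wu}_\infty^{q-1}$ (or some $L^r$ norm of $wu$) in front of the $\xi_{W_i}$. Hence the recursion does not close by summing a geometric product as in textbook Moser; one is forced back to a self-referential inequality in $\norm{wu}_\infty$ anyway --- which is precisely what the cited approach delivers in one step, without iterating. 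If you insist on Moser you must track these $\norm{wu}$ factors through the recursion and verify that the limiting inequality still reproduces the specific powers of $c_1,\ldots,c_6$ in \eqref{Estimate for wp in the bounded case under h.2}; this is feasible but considerably heavier than the one-step route, and your sketch does not yet contain that mechanism.
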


\begin{rem}\label{Rem:supp}
If one assumes $|Q\nabla w|\le c_2W_1^{\frac{1}{2k}},\,|QD^2w|\le c'_3W_1^{\frac{1}{k}}$ and  
$|\nabla Q|\le c_7w^{-\frac{1}{k}}W_1^{\frac{1}{2k}}$, for some positive constants $c_2,c'_3,c_7$, then, since $w\ge 1$, we have
\begin{eqnarray}\label{eq:important}
\abs{\mathrm{div} (Q\nabla w)}
&\leq & d\left( \abs{\nabla Q} \abs{\nabla w}+\abs{Q D^2 w}\right) \nonumber \\
&\leq & d\left( c_2c_7\eta^{-1}w^{\frac{-1}{k}}W_1^{\frac{1}{k}}+c'_3W_1^{\frac{1}{k}}\right)\nonumber \\
&\leq & d\left( c_2c_7\eta^{-1}+c'_3\right)w^{\frac{k-2}{k}}W_1^{\frac{1}{k}}.
\end{eqnarray}
So, the assumption $(iii)$ of the above theorem is satisfied with $c_3=d\left( c_2c_7\eta^{-1}+c'_3\right)$, since $1\le W_1$.
\end{rem}

For further purposes, we obtain from the above remark the following corollary.
\begin{cor}\label{Rmk: constants in h.2 under h.3}
Assume all the assumptions of Theorem \ref{theo-estimate bounded case} except $(ii)$ and $(iii)$. If $|Q\nabla w|\le c_2W_1^{\frac{1}{2k}},\,|QD^2w|\le c'_3W_1^{\frac{1}{k}}$ and  
$|\nabla Q|\le c_7w^{-\frac{1}{k}}W_1^{\frac{1}{2k}}$ hold for some positive constants $c_2,c'_3,c_7$, then
there is a constant $C>0$ depending only on  $d,\,k$ and $\eta$ such that
\begin{equation}\label{Estimate for wp in the bounded case under h.3}
w(t,y)p(t,x,y) \leq C\left(A_1\sup_{t\in (a_0,b_0)}\xi_{W_1}(t,x)+A_2\, \Xi_1(a_0,b_0) +A_3\,\Xi_2(a_0,b_0)\right),
\end{equation}
with
\begin{align}\label{def: constants A_i}
&A_1=c_1^{\frac{k}{2}},\notag\\
&A_2 =c_2^k+\frac{c_1^{\frac{k}{2}}}{(b_0-b)^{\frac{k}{2}}}+c_3^{\frac{k}{2}} +c_4^\frac{k}{2},\notag\\
&A_3=c_5^k+c_6^k + c_2^\frac{k}{2} c_6^\frac{k}{2},
\end{align}
where $c_3$ is as in Remark \ref{Rem:supp} and 
\[
\xi_{W_i} (t,x):=\int_{\R^d}p(t,x,y)W_i(t,y)\,dy,
\quad  
\Xi_i(a_0,b_0):= \int_{a_0}^{b_0}\xi_{W_i} (t,x)\, dt
\]
for $i=1,2$.
\end{cor}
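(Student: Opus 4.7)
The plan is to reduce the corollary to a direct application of Theorem \ref{theo-estimate bounded case} by verifying that its hypotheses (ii) and (iii) are implied by the new bounds on $Q\nabla w$, $QD^2w$, and $\nabla Q$, with the constant $c_2$ unchanged and with $c_3 = d(c_2 c_7 \eta^{-1} + c'_3)$ as asserted. All the other hypotheses of Theorem \ref{theo-estimate bounded case} — namely (i), (iv), (v), (vi), the regularity of $w$, $W_1$, $W_2$, the admissible behavior of $w^{-2}\partial_t w$ and $w^{-2}\nabla w$, and $k > d+2$ — are assumed to hold unchanged, so only (ii) and (iii) need to be reestablished.

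For (ii), I would simply observe that because $w \ge 1$ and $W_1 \ge 1$, one has
\[
c_2 W_1^{\frac{1}{2k}} \le c_2\, w^{\frac{k-1}{k}} W_1^{\frac{1}{k}},
\]
so the assumption $|Q\nabla w| \le c_2 W_1^{1/(2k)}$ immediately implies hypothesis (ii) of Theorem \ref{theo-estimate bounded case} with the same constant $c_2$.

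For (iii), the argument is exactly the computation recorded in Remark \ref{Rem:supp}. The chain rule gives $|\mathrm{div}(Q\nabla w)| \le d\bigl(|\nabla Q||\nabla w| + |QD^2 w|\bigr)$, and the uniform ellipticity $Q\xi\cdot\xi \ge \eta|\xi|^2$ combined with the symmetry of $Q$ yields the pointwise bound $|\nabla w| \le \eta^{-1}|Q\nabla w| \le c_2 \eta^{-1} W_1^{1/(2k)}$ (since the smallest eigenvalue of $Q$ is $\ge \eta$). Inserting this together with $|\nabla Q| \le c_7 w^{-1/k} W_1^{1/(2k)}$ and $|QD^2 w| \le c'_3 W_1^{1/k}$, and then using $w^{-1/k} \le w^{(k-2)/k}$ and $W_1^{1/k} \le W_1^{2/k}$ (valid because $w,W_1 \ge 1$ and $k>d+2>2$), one obtains
\[
|\mathrm{div}(Q\nabla w)| \le d\bigl(c_2 c_7 \eta^{-1} + c'_3\bigr)\, w^{\frac{k-2}{k}} W_1^{\frac{2}{k}},
\]
which is hypothesis (iii) with $c_3 = d(c_2 c_7 \eta^{-1} + c'_3)$.

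With (ii) and (iii) verified, Theorem \ref{theo-estimate bounded case} applies and delivers exactly the estimate \eqref{Estimate for wp in the bounded case under h.2}; with $A_1, A_2, A_3$ defined as in \eqref{def: constants A_i} this is the asserted bound \eqref{Estimate for wp in the bounded case under h.3}. The proof is a pure bookkeeping reduction, with no real obstacle: the analytic content already resides in Theorem \ref{theo-estimate bounded case} and the short computation behind Remark \ref{Rem:supp}.
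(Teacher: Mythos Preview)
Your proof is correct and follows exactly the approach of the paper: the corollary is stated there as an immediate consequence of Remark \ref{Rem:supp} (which verifies (iii) with $c_3=d(c_2c_7\eta^{-1}+c'_3)$) together with the trivial observation that $|Q\nabla w|\le c_2 W_1^{1/(2k)}$ implies (ii) since $w,W_1\ge 1$, after which Theorem \ref{theo-estimate bounded case} applies directly.
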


We aim to establish estimates for the derivatives of the kernel $p$.
To this purpose we make the following assumptions.

\begin{hyp}\label{h.3}
Fix $T>0,\,x\in \R^d$ and $0<a_0<a<b<b_0<T$. Let us consider two time dependent Lyapunov functions $1\le W_1,\,W_2$ with $W_1\le W_2$ and a weight function $1\le w\in C^{1,3}((0,T)\times\R^d)$ with $\partial_t\nabla w\in C((0,T)\times\R^d)$ such that for some $\varepsilon\in (0,1)$ and $k>2(d+2)$ the following hold true:
\begin{enumerate}
\item $\displaystyle \int_{\R^d} \left( \frac{1}{w(t,y)}\right)^{1-\varepsilon}dy<\infty$ and $\displaystyle\int_{Q(a,b)} \left( \frac{1}{w(t,y)}\right)^{1-\varepsilon} dt\,dy<\infty$;
\smallskip
\item the functions $w^{-2}\nabla w$, $w^{-2}\partial_t w$, $w^{-2} D^2 w$, $w^{-3}\nabla w \cdot \nabla w$, $w^{-2}\partial_t \nabla w$, $w^{-3} \partial_t w \nabla w$, $(\nabla w)^{-k-1} D^2 w$ and $(\nabla w)^{-k-1} \partial_t \nabla w$ are bounded on $Q(a_0,b_0)$;
\smallskip
\item there exist  constants $c_1,\ldots, c_{11}\geq 1$, possibly depending on the interval $(a_0,b_0)$, such that
\begin{tabbing}
\= (i) $w\le c_1w^{\frac{k-2}{k}}W_1^{\frac{1}{k}}$,
\qquad \qquad \= (ii) $|Q\nabla w|\le c_2W_1^{\frac{1}{2k}}$,
\qquad \qquad \= (iii) $|QD^2 w|\le c_3W_1^{\frac{1}{k}}$,\\[0.5em]
\> (iv) $|\partial_t w|\leq c_4 w^\frac{k-2}{k} W_1^\frac{1}{2k}$,
\>(v) $V^{\frac{1}{2}}\le c_5w^{-\frac{1}{k}}W_2^{\frac{1}{2k}}$,
\> (vi) $|F|\leq c_6 w^{-\frac{1}{k}}W_2^{\frac{1}{2k}}$,\\[0.5em]
\> (vii) $\abs{\nabla Q}\leq c_7 w^{-\frac{1}{k}} W_1^{\frac{1}{2k}}$,
\> (viii) $\abs{\nabla F}\leq c_8 w^{-\frac{1}{k}} W_2^{\frac{1}{k}}$, 
\> (ix) $\abs{\nabla V} \leq c_9 w^{-\frac{2}{k}} W_2^{\frac{2}{k}}$,\\[0.5em]
\> (x) $\abs{D^3 w}\leq c_{10} W_1^{\frac{3}{2k}}$
,
\> (xi) $|\partial_t\nabla w|\leq c_{11} W_1^\frac{1}{k}$,
\end{tabbing}
on $[a_0,b_0]\times \R^d$.
\end{enumerate}
\end{hyp}

From now on, we fix $0<a_0<a<a_1<b_1<b<b_0<T$ with $T>0,\,b-b_1\geq a_1-a\geq a-a_0$ and $x\in\R^d$. Moreover, we consider $p$ as a function of $(t,y)\in (0,T)\times \R^d$.

We prove some preliminary results.

\begin{theo}\label{Thm: similar to Thm 5.1,Lem 5.1, Thm 5.2 MPR}
Assume Hypothesis \ref{h.3}. Then the following statements hold.
\begin{enumerate}
\item The functions $p\log p$ and $p\log^2 p$ are integrable in $Q(a,b)$ and in $\R^d$ for all fixed $t\in [a, b]$ and
\begin{align*}
\int_{Q(a,b)} \frac{\abs{\nabla p(t,x,y)}^2}{p(t,x,y)}\,dt\,dy 
\leq &\frac{1}{\eta^2}\int_{Q(a,b)} \left(|F(y)|^2+V^2(y)\right) p(t,x,y)\,dt\,dy\notag\\
& + \int_{Q(a,b)} p(t,x,y)\log^2 p(t,x,y) \,dt\,dy\notag\\
&- \frac{2}{\eta} \int_{\R^d} [p(t,x,y)\log p(t,x,y)]_{t=a}^{t=b}dy<\infty.
\end{align*}
In particular, $p^\frac{1}{2}$ belongs to $W_2^{0,1}(Q(a,b))$.
\item $\nabla p\in L^s(Q(a_1,b_1))$ for all $1\leq s\leq \infty$.
\item $p\in W_{k/2}^{1,2}(Q(a_1,b_1))$.
\end{enumerate}
\end{theo}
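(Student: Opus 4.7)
My plan is to follow the structure of \cite[Section 5]{MPR}, with the crucial modification that every estimate is carried out so that no constant depends on $\norm{Q}_\infty$: each appearance of $Q$ will be handled either through the uniform ellipticity $Q\nabla p\cdot\nabla p\ge\eta|\nabla p|^2$ or via the weighted bounds of Hypothesis \ref{h.3}. For (a), conditions (c)(i)--(vi) of Hypothesis \ref{h.3} are designed to verify the hypotheses of Corollary \ref{Rmk: constants in h.2 under h.3} (the bound on $\mathrm{div}(Q\nabla w)$ coming via Remark \ref{Rem:supp}), so $wp\le C$ on $(a_0,b_0)\times\R^d$. Since $w\ge 1$, this gives $p\le C/w\le C$, and the elementary bound $|\log s|^j\le C_{j,\varepsilon}\,s^{-\varepsilon}$ for $s\in(0,1]$ (with $\varepsilon$ from Hypothesis \ref{h.3}) yields $p|\log p|^j\le C'w^{-(1-\varepsilon)}$ for $j=1,2$. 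Integrability of $p\log p$ and $p\log^2 p$ on $Q(a,b)$ and on each time slice then follows from Hypothesis \ref{h.3}(a).

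For the Fisher-type inequality in (a), I would multiply the forward equation $\partial_t p=\mathrm{div}_y(Q\nabla_y p)-\mathrm{div}_y(Fp)-Vp$ by $\log(p+\delta)$, cut off with $\chi_R$ supported in $B_{2R}$, integrate over $(a,b)\times\R^d$, and pass to the limits $R\to\infty$ and $\delta\to 0^+$. The cutoff errors vanish thanks to the decay $p\le C/w$ with $w\to\infty$, and the $\delta$-errors by dominated convergence once the $L^1$-bounds of the previous step are in hand. After integration by parts in $y$, the resulting identity reads, up to boundary contributions controlled by $[\int p\log p\,dy]_a^b$ and $[\int p\,dy]_a^b$,
\[
\int_{Q(a,b)}\frac{Q\nabla p\cdot\nabla p}{p}\,dt\,dy = \int_{Q(a,b)}F\cdot\nabla p\,dt\,dy-\int_{Q(a,b)}Vp\log p\,dt\,dy+(\text{boundary}).
\]
Ellipticity gives $Q\nabla p\cdot\nabla p\ge\eta|\nabla p|^2$, and the two Young splittings $|F\cdot\nabla p|\le\tfrac{\eta}{2}|\nabla p|^2/p+\tfrac{1}{2\eta}|F|^2p$ and $|Vp\log p|\le\tfrac{1}{2\eta}V^2p+\tfrac{\eta}{2}p\log^2 p$ then absorb the gradient term and yield the displayed estimate after dividing through by $\eta/2$. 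Since $p\in L^1$ and $|\nabla p^{1/2}|^2=|\nabla p|^2/(4p)$, the membership $p^{1/2}\in W_2^{0,1}(Q(a,b))$ is immediate.

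For (b), Corollary \ref{Rmk: constants in h.2 under h.3} also gives $\norm{p}_{L^\infty(Q(a_0,b_0))}<\infty$, so
\[
\int_{Q(a,b)}|\nabla p|^2\,dt\,dy\le\norm{p}_{L^\infty(Q(a,b))}\int_{Q(a,b)}\frac{|\nabla p|^2}{p}\,dt\,dy<\infty
\]
by (a); hence $\nabla p\in L^2$. To reach arbitrary $s$, I would carry out the Caccioppoli/Moser iteration of \cite[Theorem 5.2]{MPR} on the equation satisfied by $\nabla p$ over shrinking sub-boxes fitting between $(a,b)$ and $(a_1,b_1)$; the new coefficient terms arising from the differentiation are precisely controlled by the extra bounds on $|\nabla Q|$, $|\nabla F|$, $|\nabla V|$, $|D^3 w|$ and $|\partial_t\nabla w|$ in Hypothesis \ref{h.3}(c)(vii)--(xi). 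Combined with the decay of $\nabla p$ at infinity (obtained by a weighted bound of the same type as Corollary \ref{Rmk: constants in h.2 under h.3}, using Hypothesis \ref{h.3}(b)), this gives $\nabla p\in L^s(Q(a_1,b_1))$ for every $s\in[1,\infty]$.

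For (c), rewrite the forward equation as $\partial_t p-\mathrm{div}(Q\nabla p)=-\mathrm{div}(F)p-F\cdot\nabla p-Vp=:g$. The bounds of Hypothesis \ref{h.3}(c) combined with the pointwise bound on $p$ and the $L^\infty$-bound on $\nabla p$ from (b) give $g\in L^{k/2}(Q(a_1,b_1))$, and divergence-form parabolic $L^p$ regularity (applicable since $Q\in C^{1+\zeta}_{\mathrm{loc}}$) then yields $p\in W^{1,2}_{k/2}(Q(a_1,b_1))$. The hardest step is the rigorous integration by parts in (a): every cutoff and regularization error has to be controlled using only the weighted bounds of Hypothesis \ref{h.3} and never $\norm{Q}_\infty$, which is what forces the fine framework of that hypothesis and distinguishes the present argument from \cite{MPR}.
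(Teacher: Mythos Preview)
Your sketch follows the same route as the paper, which simply says that (a), (b), (c) are obtained by adapting \cite[Theorem~5.1]{MPR}, \cite[Lemma~5.1]{MPR} and \cite[Theorem~5.2]{MPR} respectively to operators with a potential term, and that Hypothesis~\ref{h.3}(a) is what yields the integrability of $p\log p$ and $p\log^2 p$. Your derivation of the Fisher-type inequality via multiplication by $\log(p+\delta)$, cutoff, and the two Young splittings is exactly the adaptation intended, and your use of $wp\le C$ together with $|\log s|^j\le C_{j,\varepsilon}s^{-\varepsilon}$ and Hypothesis~\ref{h.3}(a) for the integrability is precisely the point the paper singles out.

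Two small corrections. First, your insistence that every constant be independent of $\norm{Q}_\infty$ is misplaced for \emph{this} theorem: Section~2 works under the standing assumption that $Q$ and $\nabla Q$ are bounded, and the inequality in (a) already carries the explicit constants $1/\eta^2$, $1$ and $2/\eta$, so nothing further needs to be tracked here. The delicate $\norm{Q}_\infty$-free argument is the content of Theorem~\ref{Thm 3.7 KunzeLorenziRhandi} and Theorem~\ref{Thm: weighted-gradient estimate bounded case}, not of the present statement. Second, for (b) the relevant reference is \cite[Lemma~5.1]{MPR} (not Theorem~5.2), and the conditions (vii)--(xi) of Hypothesis~\ref{h.3}(c) are not needed at this stage; they enter later in the gradient estimate of Theorem~\ref{Thm: weighted-gradient estimate bounded case}.
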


\begin{proof}
(a), (b) and (c) can be deduced adapting respectively \cite[Theorem 5.1]{MPR}, \cite[Lemma 5.1]{MPR} and \cite[Theorem 5.2]{MPR} to operators with potential term. We note that Hypothesis \ref{h.3}(a) is used to infer integrability of the functions $p\log p$ and $p\log^2 p$.
\end{proof}

It is possible to prove even more regularity on $\nabla p$, as the following result shows.

\begin{theo}\label{Thm: Dp belongs to H^{k/2,1}}
Assume Hypothesis \ref{h.3}.
Then
$\nabla p\in \calH^{\frac{k}{2},1}(Q(a_1,b_1))$.
\end{theo}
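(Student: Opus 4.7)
My plan is to unpack the definition of $\calH^{k/2,1}(Q(a_1,b_1))$ and verify each of its two ingredients using only the regularity $p\in W_{k/2}^{1,2}(Q(a_1,b_1))$ already delivered by Theorem \ref{Thm: similar to Thm 5.1,Lem 5.1, Thm 5.2 MPR}(c). The inclusion $\nabla p\in W_{k/2}^{0,1}(Q(a_1,b_1))$ is immediate, since $p\in W_{k/2}^{1,2}$ supplies both $\nabla p\in L^{k/2}$ and $D^2 p=\nabla(\nabla p)\in L^{k/2}$ on $Q(a_1,b_1)$.

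The real step is to show $\partial_t(\nabla p)\in (W_{(k/2)'}^{0,1}(Q(a_1,b_1)))'$ with $(k/2)'=k/(k-2)$. The idea is to commute the distributional derivatives, writing $\partial_t(\nabla p)=\nabla(\partial_t p)$ on $Q(a_1,b_1)$, and then to exploit that the same theorem gives $\partial_t p\in L^{k/2}(Q(a_1,b_1))$. Testing against a smooth compactly supported vector field $\varphi\in C_c^\infty(Q(a_1,b_1);\R^d)$ and integrating by parts in the spatial variables should yield
\[
\langle \partial_t(\nabla p),\varphi\rangle=-\int_{Q(a_1,b_1)}(\partial_t p)\,\mathrm{div}(\varphi)\,dt\,dy,
\]
after which H\"older's inequality bounds the right-hand side by a constant times $\|\partial_t p\|_{L^{k/2}(Q(a_1,b_1))}\,\|\varphi\|_{W_{(k/2)'}^{0,1}(Q(a_1,b_1))}$. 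Since $(k/2)'<\infty$, smooth compactly supported vector fields are dense in $W_{(k/2)'}^{0,1}(Q(a_1,b_1);\R^d)$, so this estimate extends by continuity to the whole space and identifies $\partial_t(\nabla p)$ with a bounded linear functional on $W_{(k/2)'}^{0,1}(Q(a_1,b_1))$.

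The main subtlety I expect is precisely this commutation step: elements of $W_{(k/2)'}^{0,1}$ carry no a priori time regularity, so one cannot integrate by parts in $t$ against $\varphi$ directly. The move that rescues the argument is rewriting $\partial_t(\nabla p)$ as $\nabla(\partial_t p)$ at the distributional level, which converts the time derivative into a spatial divergence that instead lands on $\varphi$. Once this commutation and the accompanying density statement are pinned down, no further Lyapunov-type estimates beyond those packaged in Theorem \ref{Thm: similar to Thm 5.1,Lem 5.1, Thm 5.2 MPR}(c) are needed to conclude $\nabla p\in\calH^{k/2,1}(Q(a_1,b_1))$.
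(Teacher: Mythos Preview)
Your argument is correct and, in fact, considerably shorter than the paper's own proof. Once Theorem~\ref{Thm: similar to Thm 5.1,Lem 5.1, Thm 5.2 MPR}(c) furnishes $p\in W_{k/2}^{1,2}(Q(a_1,b_1))$, the membership $\nabla p\in W_{k/2}^{0,1}$ is immediate, and for each $j$ the distributional identity $\partial_t D_j p=D_j(\partial_t p)$ together with $\partial_t p\in L^{k/2}$ gives
\[
|\langle \partial_t D_j p,\varphi\rangle|=\Big|\int_{Q(a_1,b_1)}(\partial_t p)\,D_j\varphi\,dt\,dy\Big|\le \|\partial_t p\|_{L^{k/2}}\|\varphi\|_{W_{(k/2)'}^{0,1}}
\]
for $\varphi\in C_c^\infty(Q(a_1,b_1))$; density of $C_c^\infty$ in $W_{(k/2)'}^{0,1}(Q(a_1,b_1))$ (valid since $(k/2)'<\infty$ and only spatial derivatives are involved) then yields $\partial_t D_j p\in (W_{(k/2)'}^{0,1})'$.

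The paper proceeds quite differently: it does not exploit $\partial_t p\in L^{k/2}$ directly, but instead goes back to the weak parabolic equation satisfied by $p$, inserts a time cutoff $\vartheta^{k/2}$, and replaces the test function by difference quotients $\tau_{-h}\varphi$ in $y$. After rearranging, one obtains a uniform bound (in $h$) on $\tau_h q$ in the dual space, and concludes by weak compactness. This route uses the PDE structure and several ingredients of Hypothesis~\ref{h.3} (boundedness of $q_{ij},D_k q_{ij}$, growth of $F$, $V$, etc.) to control the individual terms $I_2,\dots,I_5$; your argument bypasses all of this, needing only the black-box regularity of part (c). Since the subsequent application in Theorem~\ref{Thm: weighted-gradient estimate bounded case} requires merely the membership $\nabla p\in\calH^{k/2,1}$ and not a quantitative bound on its norm, your more elementary route suffices for the paper's purposes.
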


\begin{proof}
In view of Theorem \ref{Thm: similar to Thm 5.1,Lem 5.1, Thm 5.2 MPR}, we are left to show that
\[
\partial_t \nabla p(\cdot,x, \cdot) \in  (W_{(k/2)'}^{0,1}(Q(a_1,b_1)))'.
\]
Let $\vartheta\in C^\infty(\R)$ such that $\vartheta(t)=1$ for $t\in [a_1,b_1]$, $\vartheta(t)=0$ for $t\leq a$, $t\geq b$, $0\leq \vartheta\leq 1$.
We define, for fixed $x\in \R^d$,
\[
q(t,y):=\vartheta^{k/2}(t)p(t,x,y).
\]
Consider $\varphi\in C_c^{1,2}(Q(a,b))$. By \cite[Lemma 2.1]{MPR}, we have
\[
\int_{Q(a,b)} (\partial_t \varphi(t,y) + A\varphi(t,y)) p(t,x,y)\, dt\, dy =\int_{\R^d}\left(p(b,x,y)\varphi(b,y)-p(a,x,y)	\varphi(a,y)\right)dy.
\]
Substituting $\vartheta^\frac{k}{2}\varphi$ instead of $\varphi$ in the previous equation, we get
\[
\int_{Q(a,b)} \left( q\partial_t \varphi -\langle Q\nabla \varphi, \nabla q\rangle + \langle F, \nabla \varphi\rangle q -V\varphi q+p\varphi \partial_t \vartheta^\frac{k}{2} \right)\, dt\, dy =0.
\]
We replace again $\varphi$ by the difference quotients with respect to the variable $y$
\[
\tau_{-h}\varphi(t,y)=\frac{\varphi(t,y-he_j)-\varphi(t,y)}{|h|},
\]
for $(t,y)\in Q(a,b) $, $0\neq h\in\R$ and we obtain
\begin{align*}
& \int_{Q(a,b)}  q\partial_t (\tau_{-h}\varphi)\, dt\, dy 
-\int_{Q(a,b)} \langle Q\nabla (\tau_{-h}\varphi), \nabla q\rangle \, dt\, dy 
+ \int_{Q(a,b)} \langle F, \nabla (\tau_{-h}\varphi)\rangle q \, dt\, dy \\
&-\int_{Q(a,b)} Vq(\tau_{-h}\varphi)\,dt\,dy+\int_{Q(a,b)} p(\tau_{-h}\varphi )\partial_t \vartheta^\frac{k}{2} \, dt\, dy = I_1-I_2+I_3-I_4+I_5=0,
\end{align*}
where
\begin{align*}
& I_1=\int_{Q(a,b)}  q\partial_t (\tau_{-h}\varphi)\, dt\, dy, 
\quad  I_2=\int_{Q(a,b)} \langle Q\nabla (\tau_{-h}\varphi), \nabla q\rangle \, dt\, dy,\\
& I_3=\int_{Q(a,b)} \langle F, \nabla (\tau_{-h}\varphi)\rangle q \, dt\, dy,\quad I_4=\int_{Q(a,b)} Vq(\tau_{-h}\varphi)\,dt\,dy,
&& I_5=\int_{Q(a,b)} p(\tau_{-h}\varphi )\partial_t \vartheta^\frac{k}{2} \, dt\, dy.
\end{align*}
By a change of variables we have
\[
I_1
= \int_{Q(a,b)} (\tau_{h} q) \partial_t \varphi\, dt\, dy
\]
and
\[
I_2
= \frac{1}{|h|}\int_{Q(a,b)} \left(\langle Q(y+he_j)\nabla \varphi(t,y), \nabla q(t,y+he_j)\rangle
- \langle Q(y)\nabla \varphi(t,y), \nabla q(t,y)\rangle \right)\, dt\, dy.
\]
Summing and subtracting $|h|^{-1}\int_{Q(a,b)}\langle Q(y+he_j)\nabla \varphi(t,y), \nabla q(t,y)\rangle\, dt\, dy$ in the previous expression yields
\[
I_2
= \int_{Q(a,b)} \left(\langle Q(y+he_j)\nabla \varphi(t,y), \nabla \tau_h q(t,y)\rangle
+ \langle \tau_h Q(y)\nabla \varphi(t,y), \nabla q(t,y)\rangle \right)\, dt\, dy.
\]
Similarly, we find that
\begin{eqnarray*}
I_3 &=& \int_{Q(a,b)} \left(\tau_h q(t,y) \langle F(y+he_j), \nabla \varphi(t,y)\rangle + q(t,y)\langle \tau_h F(t,y), \nabla \varphi(t,y)\rangle \right)\, dt\, dy,\\
I_4 &=& \int_{Q(a,b)} \left(\tau_h V(y)q(t,y)+V(y+he_j)\tau_h q(t,y)\right)\varphi(t,y)\,dt\,dy
\end{eqnarray*}
and 
\[
I_5=\int_{Q(a,b)} (\tau_h p) \varphi \partial_t \vartheta^\frac{k}{2} \, dt\, dy.
\]
Since $q_{ij}\in C_b^1(\R^d)$, applying the Cauchy-Schwarz inequality and H\"older's inequality we deduce that
\[
|I_2|
\leq c \left(\norm{\nabla \tau_h q}_{L^{k/2}(Q(a,b))} +\norm{\nabla q}_{L^{k/2}(Q(a,b))}\right)\norm{\varphi}_{W_{(k/2)'}^{0,1}(Q(a,b))}.
\]
Moreover,
\begin{align*}
|I_3|
\leq &\left(\int_{Q(a,b)} \abs{\tau_h q(t,y)}^\frac{k}{2} \abs{F(y+he_j)}^\frac{k}{2}\, dt\, dy\right)^\frac{2}{k}\norm{\varphi}_{W_{(k/2)'}^{0,1}(Q(a,b))}\\
&+ \left(\int_{Q(a,b)}  q^\frac{k}{2} \abs{\tau_h F}^\frac{k}{2}\, dt\, dy\right)^\frac{2}{k}\norm{\varphi}_{W_{(k/2)'}^{0,1}(Q(a,b))}\\
\leq &\norm{\tau_h q}_\infty^\frac{k-2}{k}\left(\int_{Q(a,b)}\frac{\abs{\tau_h p(t,y)}^2}{p} \, dt\, dy\right)^\frac{1}{k}\left(\int_{Q(a,b)}\abs{F(y+he_j)}^k p\, dt\, dy\right)^\frac{1}{k}\norm{\varphi}_{W_{(k/2)'}^{0,1}(Q(a,b))}\\
&+ \norm{q}_{L^\infty(Q(a,b))}^\frac{k-2}{k}\left(\int_{Q(a,b)} \abs{\tau_h F}^\frac{k}{2}q\, dt\, dy\right)^\frac{2}{k}\norm{\varphi}_{W_{(k/2)'}^{0,1}(Q(a,b))}.
\end{align*}
Similarly, we have
\begin{align*}
|I_4| \leq &\norm{\tau_h q}_\infty^\frac{k-2}{k}\left(\int_{Q(a,b)}\abs{V(y+he_j)}^kp\,dt\,dy\right)^{\frac{1}{k}}
\left(\int_{Q(a,b)}\frac{\abs{\tau_h p(t,y)}^2}{p} \, dt\, dy\right)^\frac{1}{k}\norm{\varphi}_{W_{(k/2)'}^{0,1}(Q(a,b))}\\
&+ \norm{q}_{L^\infty(Q(a,b))}^\frac{k-2}{k}\left(\int_{Q(a,b)} \abs{\tau_h V}^\frac{k}{2}p\, dt\, dy\right)^\frac{2}{k}
\norm{\varphi}_{W_{(k/2)'}^{0,1}(Q(a,b))}.
\end{align*}
Finally,
\[
|I_5|
\leq c \norm{\tau_h p}_{L^{k/2}(Q(a,b))}\norm{\varphi}_{W_{(k/2)'}^{0,1}(Q(a,b))}.
\]
Hence,
\begin{align*}
\Bigg| \int_{Q(a,b)} & (\tau_{h} q) \partial_t \varphi\, dt\, dy \Bigg|
\leq c\Bigg[ \norm{\nabla \tau_h q}_{L^{k/2}(Q(a,b))} +\norm{\nabla q}_{L^{k/2}(Q(a,b))}\\
&+\norm{\tau_h q}_\infty^\frac{k-2}{k}\left\Vert\frac{\tau_h p}{\sqrt{p}}\right\Vert_{L^2(Q(a,b))}^{\frac{2}{k}}\left(\int_{Q(a,b)}\abs{F(y+he_j)}^k p(t,y)\, dt\, dy\right)^\frac{1}{k}\\
&+ \norm{\tau_h q}_\infty^\frac{k-2}{k}\left\Vert\frac{\tau_h p}{\sqrt{p}}\right\Vert_{L^2(Q(a,b))}^{\frac{2}{k}}\left(\int_{Q(a,b)}\abs{V(y+he_j)}^kp(t,y)\,dt\,dy\right)^{\frac{1}{k}}\\
&+\norm{q}_{L^\infty(Q(a,b))}^\frac{k-2}{k}\left\{\left(\int_{Q(a,b)}\abs{\tau_h F}^\frac{k}{2} q(t,y)\, dt\, dy\right)^\frac{2}{k}+\left(\int_{Q(a,b)}\abs{\tau_h V}^\frac{k}{2} q(t,y)\, dt\, dy\right)^\frac{2}{k}\right\}\\
&+\norm{\tau_h p}_{L^{k/2}(Q(a,b))}
\Bigg]\norm{\varphi}_{W_{(k/2)'}^{0,1}(Q(a,b))}.
\end{align*}

As $p\in W_{k/2}^{1,2}(Q(a,b))$ by Theorem \ref{Thm: similar to Thm 5.1,Lem 5.1, Thm 5.2 MPR}(c), it follows that $\nabla \tau_h q\to \nabla D_j q$ in $L^\frac{k}{2}(Q(a,b))$ which implies the boundedness of $\norm{\nabla \tau_h q}_{L^\frac{k}{2}(Q(a,b))}$. Similarly, we may infer the boundedness of $\left\Vert\frac{\tau_h p}{\sqrt{p}}\right\Vert_{L^2(Q(a,b))}$ from Theorem \ref{Thm: similar to Thm 5.1,Lem 5.1, Thm 5.2 MPR}(a). 
As $\nabla p\in L^\infty(Q(a,b))$ by Theorem \ref{Thm: similar to Thm 5.1,Lem 5.1, Thm 5.2 MPR}(b), the difference quotients $\tau_h q$ converge weak* in $L^\infty(Q(a,b))$ to $D_j q$, where also $\norm{\tau_h q}_\infty$ is bounded.
Boundedness of the integrals involving $F$ can easily be deduced from the fact that $F\in C_{\mathrm{loc}}^{1+\zeta},\,V\in C_{\mathrm{loc}}^\zeta$ and the mean value theorem.
All together, we see that for a certain constant $C$, we have
\[
\abs{\int_{Q(a,b)} (\tau_{h} q) \partial_t \varphi\, dt\, dy}
\leq C \norm{\varphi}_{W_{(k/2)'}^{0,1}(Q(a,b))},
\]
for all $\varphi \in C_c^{1,2}(Q(a,b))$. By density, this estimate extends to $\varphi\in W_{(k/2)'}^{0,1}(Q(a,b))$ and it follows that the elements $\tau_h q$ are uniformly bounded in $(W_{(k/2)'}^{0,1}(Q(a,b)))'$.
Thus, by reflexivity, we see that as $h\to 0$ we find cluster-points in $(W_{(k/2)'}^{0,1}(Q(a,b)))'$. But testing against functions in $C_c^\infty(Q(a,b))$, we find that the only possible cluster point is $D_j q$. 
This yields $\partial_t D_j p\in (W_{(k/2)'}^{0,1}(Q(a,b)))'$ and finishes the proof.
\end{proof}

The following result, which is a version of \cite[Thm.\ 3.7]{KunzeLorenziRhandi2}, is the key to prove the main theorem of this section.

\begin{theo}\label{Thm 3.7 KunzeLorenziRhandi}
Let $q_{ij}\in C_{\mathrm{loc}}^\varsigma (\R^d)\cap C_b(\R^d)$ be such that $q_{ij}=q_{ji}$ for $i,j=1,\dots,d$ and such that $\langle Q(x)\xi,\xi\rangle\geq \eta\abs{\xi}^2$ for a certain $\eta>0$ and any $x,\xi\in\R^d$.\\
Further, let $0\leq a_0<b_0\leq 1$, $k>d+2$ and let functions $f\in L^\frac{k}{2}(Q(a_0,b_0))$, $h=(h_i)\in L^k(Q(a_0,b_0),\R^d)$ and $u\in \mathcal{H}^{p,1}(Q(a_0,b_0))\cap L^\infty(a_0,b_0;L^2(\R^d))$, for some $p>d+2$, be given such that $u(a_0)=0$ and
\begin{equation}
\int_{Q(a_0,b_0)}[\langle Q\nabla u,\nabla\psi\rangle+\psi\partial_t u]dtdx
= \int_{Q(a_0,b_0)} f\psi dtdx
+\int_{Q(a_0,b_0)} \langle h,\nabla \psi\rangle dtdx,
\end{equation}
for all $\psi\in C_c^\infty(Q(a_0,b_0))$. Then, $u$ is bounded and there exists a constant $C>0$, depending only on $\eta,d$ and $k$ (but not depending on $\norm{Q}_\infty$) such that
\begin{equation*}
\norm{u}_\infty\leq C(\norm{u}_{2}+\norm{f}_{\frac{k}{2}}+\norm{h}_k).
\end{equation*}
\end{theo}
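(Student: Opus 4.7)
My proof would proceed by parabolic Moser iteration, mirroring the structure of the original \cite[Thm.\ 3.7]{KunzeLorenziRhandi2}. The decisive observation — and the reason the resulting constant depends on $Q$ only through $\eta$ — is that $Q$ enters the weak formulation only through $\langle Q\nabla u,\nabla\psi\rangle$: choosing $\psi$ as a suitable power of $u$ makes this term a non-negative multiple of $\langle Q\nabla u,\nabla u\rangle\geq\eta\abs{\nabla u}^2$, so only the ellipticity constant is used. The gradient contribution arising from the $h$-term on the right-hand side is then reabsorbed via Young's inequality against precisely this $\eta$, so no upper bound for $Q$ is ever needed.

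\textbf{Energy step.} For each $\beta\geq 1$ I would test the equation with $\psi_\beta:=u\abs{u}^{2\beta-2}$. To make this test function admissible — given that $\partial_t u$ lives only in the dual space $(W^{0,1}_{p'}(Q(a_0,b_0)))'$ — I would first truncate $u$ at level $N$ and employ a Steklov average in time, and then pass to the limit; these are standard technical ingredients. Setting $v_\beta:=u\abs{u}^{\beta-1}$, so that $\abs{v_\beta}^2=\abs{u}^{2\beta}$ and $\abs{\nabla v_\beta}^2=\beta^2\abs{u}^{2\beta-2}\abs{\nabla u}^2$, the time term integrates to $\tfrac{1}{2\beta}\partial_t\abs{v_\beta}^2$. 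Using $v_\beta(a_0)=0$ and integrating from $a_0$ to an arbitrary $t\in(a_0,b_0)$ I obtain
\begin{equation*}
\frac{1}{2\beta}\norm{v_\beta(t)}_{L^2(\R^d)}^2+\frac{(2\beta-1)\eta}{\beta^2}\int_{a_0}^{t}\!\!\int_{\R^d}\abs{\nabla v_\beta}^2\,ds\,dy\leq \left|\int_{a_0}^{t}\!\!\int f\psi_\beta\,ds\,dy\right|+\left|\int_{a_0}^{t}\!\!\int\langle h,\nabla\psi_\beta\rangle\,ds\,dy\right|.
\end{equation*}
The $h$-integral is split via Cauchy–Schwarz into factors $\abs{\nabla v_\beta}$ and $\abs{h}\abs{u}^{\beta-1}$, and Young's inequality then absorbs half of the gradient term back into the left-hand side. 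The $f$-integral is estimated by Hölder with exponents $k/2$ and $(k/2)'$ against a power of $\norm{u}_{(2\beta-1)k/(k-2)}$.

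\textbf{Iteration.} Combining the resulting bounds on $\sup_{t\in(a_0,b_0)}\norm{v_\beta(t)}_{L^2(\R^d)}$ and on $\norm{\nabla v_\beta}_{L^2(Q(a_0,b_0))}$, the parabolic Gagliardo–Nirenberg embedding
\begin{equation*}
L^\infty(a_0,b_0;L^2(\R^d))\cap L^2(a_0,b_0;H^1(\R^d))\hookrightarrow L^{2(d+2)/d}(Q(a_0,b_0))
\end{equation*}
produces a reverse-Hölder inequality controlling $\norm{u}_{L^{2\beta\kappa}(Q(a_0,b_0))}$ by $\norm{u}_{L^{2\beta}(Q(a_0,b_0))}$, $\norm{f}_{L^{k/2}(Q(a_0,b_0))}$ and $\norm{h}_{L^{k}(Q(a_0,b_0))}$, with $\kappa=(d+2)/d>1$. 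The assumption $k>d+2$ is exactly what ensures that the "forcing" exponents $(2\beta-1)k/(k-2)$ stay below $2\beta\kappa$, so that a further application of Young's inequality dominates the mixed terms by $\norm{u}_{L^{2\beta\kappa}}$ plus pure $L^{k/2}$- and $L^k$-norms of $f$ and $h$. Iterating along $\beta_n=\kappa^n$ and checking that the accumulated product of multiplicative constants $(C\beta_n)^{c/\beta_n}$ converges as $n\to\infty$ yields, in the limit, the desired bound $\norm{u}_\infty\leq C(\norm{u}_2+\norm{f}_{k/2}+\norm{h}_k)$ with $C=C(\eta,d,k)$.

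\textbf{Main obstacle.} The main difficulty I anticipate is not the iteration itself but the bookkeeping in the energy step with one-sided boundary data: since only $u(a_0)=0$ is given and no sign information is available on $u(b_0)$, one cannot introduce a standard time cutoff, as this would create a boundary contribution at $t=b_0$ whose control would ultimately require an upper bound on $\norm{Q}_\infty$. Instead, one must exploit the $L^\infty(a_0,b_0;L^2(\R^d))$-hypothesis directly together with the identity $\psi_\beta\partial_t u=\tfrac{1}{2\beta}\partial_t\abs{v_\beta}^2$ to extract a pointwise-in-time $L^2$-bound without time localization. A related technical point is the justification of $\psi_\beta$ as a test function for every $\beta$, addressed by the truncation-and-Steklov approximation sketched above.
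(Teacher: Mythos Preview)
Your approach is correct but differs from the paper's. The paper does not reprove the result; it simply refers to \cite[Thm.\ 3.7]{KunzeLorenziRhandi2}, whose proof is a De~Giorgi-type level-set iteration (working with the superlevel sets $A_\ell=\{u\geq\ell\}$ and the truncations $(u-\ell)_+$), and observes that the only place the $L^\infty(a_0,b_0;L^2)$-norm of $u$ is used is in the bound $|A_\ell|\leq 1$ under the normalization $\norm{u}_{\infty,2}\leq 1$---and that bound follows equally well from $\norm{u}_2\leq 1$. So the paper's argument is a one-line modification of an existing proof. Your proposal instead carries out a full Moser iteration with test functions $\psi_\beta=u\abs{u}^{2\beta-2}$. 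Both schemes share the structural reason the constant does not depend on $\norm{Q}_\infty$: whether one tests with $(u-\ell)_+$ or with $u\abs{u}^{2\beta-2}$, the matrix $Q$ enters only through the coercive term $\langle Q\nabla u,\nabla u\rangle\geq\eta\abs{\nabla u}^2$. Your route is longer but self-contained and makes the mechanism explicit; the paper's is shorter but relies on having \cite{KunzeLorenziRhandi2} at hand. One minor correction: your worry that a time cutoff would introduce a $\norm{Q}_\infty$-dependent boundary contribution is unfounded---a cutoff $\vartheta(t)$ only produces an extra term $\vartheta'\abs{u}^{2\beta}$, which does not involve $Q$ at all---but since $u(a_0)=0$ no cutoff is needed anyway, so the point is moot.
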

\begin{proof}
The difference of \cite[Thm.\ 3.7]{KunzeLorenziRhandi2} and the above theorem is that in \cite[Thm.\ 3.7]{KunzeLorenziRhandi2},
the norm $\norm{u}_2$ in the right-hand side is replaced with $\norm{u}_{\infty, 2}$. However, inspecting the proof of \cite[Theorem 3.7]{KunzeLorenziRhandi2}, we see that basically the same proof works. Indeed, in the proof it is initially assumed that
$\norm{u}_{\infty, 2}\leq 1$. This assumption is needed to prove that for $A_\ell(t):=\lbrace u(t)\geq \ell\rbrace$ and $A_\ell:=\lbrace u\geq \ell\rbrace$ we have $|A_\ell|\leq 1$. However, this is still true under the weaker assumption $\norm{u}_2\leq 1$:
\begin{equation*}
|A_\ell|
< \int_{A_\ell} \ell^2\,dt dx
\leq  \int_{A_\ell} |u(t,x)|^2 \,dt dx
\leq \norm{u}_{2}
\leq 1.
\end{equation*}
As this is the only place where the $\norm{\cdot}_{\infty, 2}$-norm appears, the rest of the proof carries over verbatim.
\end{proof}

With the help of Theorem \ref{Thm 3.7 KunzeLorenziRhandi}, we can now prove an upper bound for $\abs{w\nabla p}$ that does not depend on the $\norm{\cdot}_\infty$-bound of the diffusion coefficients.

\begin{theo} \label{Thm: weighted-gradient estimate bounded case}
Assume Hypothesis \ref{h.3}. 
Then there is a constant $C>0$ depending only on $d$, $k$ and $\eta$ (but not depending on $\norm{Q}_\infty$) such that
\begin{align}\label{weighted-gradient estimate bounded case}
 |w(t,y)&\nabla p(t,x,y)|\notag\\
& \leq C\Bigg\lbrace B_1 \,\Xi_1(a_0,b_0)^\frac{1}{2}\norm{w p}_{L^\infty(Q(a,b))}^\frac{1}{2}
+\left(B_2 \, \Xi_1(a_0,b_0)^\frac{2}{k}+B_3\,\Xi_2(a_0,b_0)^\frac{2}{k}\right)\norm{w p}_{L^\infty(Q(a,b))}^\frac{k-2}{k}\notag\\
&+ \Bigg[B_1\left(\sup_{t\in (a_0,b_0)}\xi_{W_1}(t,x)\right)^\frac{1}{k} 
+ B_4\, \Xi_1(a_0,b_0)^\frac{1}{k}+B_5\,\Xi_2(a_0,b_0)^\frac{1}{k}\Bigg]\norm{w p}_{L^\infty(Q(a,b))}^\frac{k-1}{k}\notag\\
&+ \left( B_6 \,\Xi_1(a_0,b_0)^\frac{1}{2} +B_7\,\Xi_2(a_0,b_0)^\frac{1}{2}\right)\left(\int_{Q(a,b)}\frac{\abs{\nabla p}^2}{p}\,dt\, dy\right)^\frac{1}{2}\Bigg\rbrace
\end{align}
for all $(t,y)\in (a_1,b_1)\times \R^d$ and fixed $x\in\R^d$, where $B_i,\,i=1,\ldots ,7$ are positive constants
depending only on $c_i,\,i=1,\ldots, 11,\,b,\,b_1$ and $k$.
\end{theo}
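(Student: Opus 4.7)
The plan is to apply Theorem \ref{Thm 3.7 KunzeLorenziRhandi} componentwise to
\[
v_j(t,y):=\vartheta(t)^{k/2}\,w(t,y)\,D_j p(t,x,y),\qquad j=1,\dots ,d,
\]
where $\vartheta\in C_c^\infty(\R)$ is a cutoff with $\vartheta\equiv 1$ on $[a_1,b_1]$, $\vartheta\equiv 0$ outside $(a,b)$ and $0\le\vartheta\le 1$. By construction $v_j$ extends by zero to $Q(a_0,b_0)$ with $v_j(a_0,\cdot)\equiv 0$. Combining Theorems \ref{Thm: similar to Thm 5.1,Lem 5.1, Thm 5.2 MPR} and \ref{Thm: Dp belongs to H^{k/2,1}} with the boundedness of the quotients of derivatives of $w$ collected in Hypothesis \ref{h.3}(a),(b), one checks that $v_j\in\calH^{k/2,1}(Q(a_0,b_0))\cap L^\infty(a_0,b_0;L^2(\R^d))$; an affine rescaling of time places $(a_0,b_0)$ into $(0,1)$, with the scaling constants absorbed into the $B_i$.

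The heart of the argument is the derivation of a divergence-form parabolic equation for $v_j$. Starting from the adjoint equation $\partial_t p=\div(Q\nabla p)-\div(Fp)-Vp$ satisfied by $p(t,x,\cdot)$, one differentiates in $y_j$, multiplies by $\vartheta^{k/2}w$, and systematically applies the product rule together with the symmetry of $Q$ and the identity
\[
w\,\div(Q\nabla D_j p)=\div(Q\nabla v_j)-(D_j p)\,\div(Q\nabla w)-2\,\nabla w\cdot Q\nabla D_j p
\]
(and its analogues for the drift and potential contributions, plus a time-boundary correction from $\vartheta'$). This yields a weak identity of the form
\[
\int_{Q(a_0,b_0)}\bigl[\langle Q\nabla v_j,\nabla\psi\rangle+\psi\,\partial_t v_j\bigr]\,dt\,dy=\int_{Q(a_0,b_0)}f_j\,\psi\,dt\,dy+\int_{Q(a_0,b_0)}\langle h_j,\nabla\psi\rangle\,dt\,dy
\]
for every $\psi\in C_c^\infty(Q(a_0,b_0))$, where $f_j$ and $h_j$ are explicit expressions linear in $p$ and $\nabla p$ whose coefficients are products of $\vartheta,\vartheta'$, derivatives of $w$ of order up to three, and derivatives of $Q,F,V$ of order up to one. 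The delicate part of this step is deciding which contributions naturally sit in divergence form (and therefore go into $h_j$) and which must be expanded out (and go into $f_j$); this split fixes which power of $\norm{wp}_\infty$ each term eventually carries.

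Granted the equation, Theorem \ref{Thm 3.7 KunzeLorenziRhandi} gives
\[
\norm{v_j}_\infty\le C\bigl(\norm{v_j}_2+\norm{f_j}_{k/2}+\norm{h_j}_k\bigr),
\]
and it remains to estimate each term. The $L^2$-bound starts from $\norm{v_j}_2^2\le\norm{wp}_{L^\infty(Q(a,b))}\int_{Q(a,b)}\vartheta^k w\,\abs{\nabla p}^2/p\,dt\,dy$; combined with Hypothesis \ref{h.3}(c)(i) (which, using $w\ge 1$, gives $w\le c_1^{k/2}W_1^{1/2}$) and the Cauchy--Schwarz inequality
\[
\int\alpha\,\abs{\nabla p}\,dt\,dy\le\Bigl(\int\alpha^2 p\,dt\,dy\Bigr)^{1/2}\Bigl(\int\abs{\nabla p}^2/p\,dt\,dy\Bigr)^{1/2},
\]
this produces the contributions $B_1\Xi_1^{1/2}\norm{wp}_\infty^{1/2}$ and, via the same Cauchy--Schwarz applied to the mixed terms in $f_j,h_j$ that still carry a factor $\nabla p$, the block $(B_6\Xi_1^{1/2}+B_7\Xi_2^{1/2})\bigl(\int\abs{\nabla p}^2/p\bigr)^{1/2}$; finiteness of the last integral is guaranteed by Theorem \ref{Thm: similar to Thm 5.1,Lem 5.1, Thm 5.2 MPR}(a). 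The norms $\norm{f_j}_{k/2}$ and $\norm{h_j}_k$ are controlled term by term: writing each integrand as (coefficient bound)$^{m}(wp)^{m-1}p\,w^{-(m-1)}$ with $m=k/2$ or $m=k$, the coefficient factors are dominated through parts (ii)--(xi) of Hypothesis \ref{h.3}(c), and the surviving integrals $\int W_i^{1/2}p\le\int W_i p=\Xi_i$ (since $W_i\ge 1$) furnish the exponents $\Xi_i^{2/k}$ or $\Xi_i^{1/k}$, while the extracted $(wp)^{m-1}$ produces $\norm{wp}_\infty^{(k-2)/k}$ or $\norm{wp}_\infty^{(k-1)/k}$, matching the statement. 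Summing the resulting pointwise bounds over $j=1,\dots,d$ yields \eqref{weighted-gradient estimate bounded case}. The crucial point in the whole argument is that the constant $C$ in Theorem \ref{Thm 3.7 KunzeLorenziRhandi} is independent of $\norm{Q}_\infty$, so the final $B_i$ inherit this independence, which is exactly what is needed for the approximation procedure of the next section. The main technical obstacle is the PDE derivation and the $f_j/h_j$ bookkeeping of Step~2; Steps~3--4 are routine once the Lyapunov framework is in place.
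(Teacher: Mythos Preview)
There is a genuine gap in your absorption of the $\nabla p$-terms. Your claim that $\norm{v_j}_2$ and the $\nabla p$-carrying pieces of $f_j,h_j$ are controlled purely in terms of $\norm{wp}_\infty$, $\Xi_i$ and $P:=\int_{Q(a,b)}|\nabla p|^2/p$ does not hold. For the $L^2$-norm, your starting inequality $\norm{v_j}_2^2\le\norm{wp}_\infty\int w|\nabla p|^2/p$ is correct, but the remaining integral $\int w|\nabla p|^2/p$ cannot be bounded by $\Xi_1$ and $P$ alone via Cauchy--Schwarz; the displayed Cauchy--Schwarz inequality you quote handles $\int\alpha|\nabla p|$, not $\int\alpha|\nabla p|^2/p$. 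Likewise, a typical $h_j$-term such as $wF\nabla q$ has $\norm{wF\nabla q}_k^k=\int w^k|F|^k|\nabla q|^k$: one factor of $|\nabla q|$ can be split off and paired with $\sqrt{q}$ to produce $P^{1/2}$, but the remaining $|\nabla q|^{k-1}$ forces a factor $\norm{w\nabla q}_\infty^{k-1}$. The same happens for the $\|\cdot\|_{k/2}$-terms containing $\nabla q$, where one gets $\norm{w\nabla q}_\infty^{(k-2)/k}$. Consequently the output of Theorem~\ref{Thm 3.7 KunzeLorenziRhandi} is not a closed bound but a self-referential inequality of the type
\[
X^k\le \tfrac{4}{3}\alpha+\tfrac{4}{3}\beta X^{k-1}+\tfrac{4}{3}\gamma X^{k-2},\qquad X:=\norm{w\nabla q}_\infty^{1/k},
\]
after absorbing the $L^2$-contribution by Young's inequality. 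The paper closes this by an elementary analysis of the polynomial $r\mapsto r^k-\tfrac{4}{3}\beta r^{k-1}-\tfrac{4}{3}\gamma r^{k-2}-\tfrac{4}{3}\alpha$, yielding $X\le\tfrac{4}{3}\beta+\sqrt{\tfrac{4}{3}\gamma}+(\tfrac{4}{3}\alpha)^{1/k}$; this bootstrap is the missing step in your outline and is precisely where the $B_6,B_7$ block (coming from $\beta^k,\gamma^{k/2}$) acquires its final form.

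Two further points you should address. First, the paper works with $u=D_h(wq)$ rather than $v_j=wD_jq$, so an extra term $\norm{q\nabla w}_\infty$ appears; this is \emph{not} estimated termwise but by a second application of Theorem~\ref{theo-estimate bounded case} with the auxiliary weight $\tilde w=(1+|\nabla w|^2)^{k/2}$, and it is here---and only here---that conditions (x) on $D^3w$ and (xi) on $\partial_t\nabla w$ enter (explaining the presence of $c_{10},c_{11}$ in $B_4$). If you insist on working with $v_j$ directly you must explain where these hypotheses are used. Second, all of the above manipulations are carried out first under the extra assumption that $w$ together with its relevant derivatives is bounded (so that membership of $v_j$ in $\calH^{k/2,1}$ and the integrations by parts are justified); the general case is recovered at the end by replacing $w$ with $w_\varepsilon=w/(1+\varepsilon w)$, checking that Hypothesis~\ref{h.3}(c) persists with constants independent of $\varepsilon$, and letting $\varepsilon\to0$. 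Your sketch omits this truncation step.
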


\begin{proof}

We first prove the theorem assuming that the weight function $w$, along with its first order partial derivatives and its second order partial derivatives of the form $D_{ij}w$ and $\partial_t D_i w$ are bounded.
We fix $a_0<a<a_2<a_1<b_1<b_2<b<b_0$.
  
We show that 
\begin{equation}\label{eq4: Thm: weighted-gradient estimate bounded case}
\nabla (wp)\in \mathcal{H}^{\frac{k}{2},1}(Q(a_2,b_2))\cap L^\infty(Q(a_2,b_2)).
\end{equation}
We apply Theorem \ref{Thm: similar to Thm 5.1,Lem 5.1, Thm 5.2 MPR}(b) and Theorem \ref{Thm: Dp belongs to H^{k/2,1}} to infer that that $\nabla p\in \mathcal{H}^{\frac{k}{2},1}(Q(a_2,b_2))\cap L^\infty(Q(a_2,b_2))$. 
Moreover, by \cite[Lemma 12.4]{KunzeLorenziRhandi1} and Theorem \ref{theo-estimate bounded case}, we have that $p\in \mathcal{H}^{\frac{k}{2},1}(Q(a_2,b_2))\cap L^\infty(Q(a_2,b_2)) $. Thus, we get \eqref{eq4: Thm: weighted-gradient estimate bounded case}.

Let $\vartheta\in C^\infty(\R)$ such that $\vartheta(t)=1$ for $t\in [a_1,b_1]$, $\vartheta(t)=0$ for $t\leq a_2$, $t\geq b_2$, $0\leq \vartheta\leq 1$ and $\abs{\vartheta'}\leq \frac{2}{b_2-b_1}$. 
We define
\[
q(t,y):=\vartheta^{k/2}(t)p(t,x,y)
\]
and we note that $\nabla (wq)\in \mathcal{H}^{\frac{k}{2},1}(Q(a_2,b_2))\cap L^\infty(Q(a_2,b_2))$.
Moreover, given $\varphi\in C_c^\infty(Q(a_2,b_2))$, we write
\[
\psi(t,y):=\vartheta^{k/2}(t) w(t,y) D_h\varphi(t,y),
\]
with $h=1,\dots, d$.
For each $h=1,\dots, d$ we apply \cite[Lemma 2.1]{MPR}, which remains valid for operators with potential term. Hence
\begin{align*}
\int_{Q(a_2,b_2)} (\partial_t \psi(t,y) + A\psi(t,y)) p(t,x,y)\, dt\, dy =0.
\end{align*}
Integrating by parts, we get
\begin{align*}
\int_{Q(a_2,b_2)} \big[ p\partial_t\psi - \langle Q\nabla \psi, \nabla p\rangle +\langle F,\nabla \psi \rangle p-V\psi p\big] dt\,dy=0.
\end{align*}
Replacing the expression of the functions $\psi$ and $q$, after some computations we derive that
\begin{align*}
\int_{Q(a_2,b_2)} \Bigg[&\frac{k}{2} \vartheta' \vartheta^\frac{k-2}{2} wp (D_h\varphi) + wq (\partial_t D_h\varphi) - \langle Q\nabla w, \nabla q\rangle (D_h\varphi)- \langle Q \nabla D_h\varphi, w\nabla q\rangle\\
&+ \langle F, q\nabla w\rangle (D_h\varphi)+ \langle F, \nabla D_h\varphi\rangle wq- Vwq(D_h\varphi)+q(\partial_t w)(D_h\varphi)\Bigg]dt\,dy=0.
\end{align*}
Integrating by parts again in order to remove the derivative $D_h$ in front of $\varphi$, we have that
\begin{align}\label{eq1: Thm: weighted-gradient estimate bounded case}
\int_{Q(a_2,b_2)} \Bigg[& -\frac{k}{2} \vartheta' \vartheta^\frac{k-2}{2} w(D_h p)\varphi -\frac{k}{2} \vartheta' \vartheta^\frac{k-2}{2} p(D_h w)\varphi + (\partial_t D_h(wq))\varphi+\langle (D_hQ)\nabla w, \nabla q\rangle \varphi \notag\\
&+\langle Q(D_h\nabla w), \nabla q\rangle \varphi+\langle Q\nabla w, D_h \nabla q\rangle \varphi+w\langle (D_h Q)\nabla q, \nabla \varphi\rangle + \langle QD_h(w\nabla q), \nabla \varphi\rangle \notag\\
&-q\langle F,  D_h\nabla w\rangle\varphi- q\langle D_h F, \nabla w\rangle \varphi-(D_h q)\langle F, \nabla w\rangle \varphi- w(D_h q)\langle F, \nabla \varphi\rangle  \notag\\
&- q(D_h w)\langle F, \nabla \varphi\rangle-wq\langle D_h F, \nabla \varphi\rangle + Vw(D_h q)\varphi+ Vq(D_h w)\varphi+(D_h V)wq\varphi\notag\\
&-(\partial_t D_h w)q\varphi-(\partial_t w)(D_h q)\varphi\Bigg] dt\, dy=0.
\end{align}
Since
\begin{align*}
\int_{Q(a_2,b_2)} \langle Q\nabla w, D_h \nabla q\rangle \varphi \,dt\,dy = -\int_{Q(a_2,b_2)} \Big[ (D_h q)\mathrm{div}(Q\nabla w)\varphi + (D_h q)\langle Q\nabla w, \nabla \varphi\rangle \Big] dt\,dy
\end{align*}
and 
\begin{align*}
\int_{Q(a_2,b_2)}\langle QD_h(w\nabla q), \nabla \varphi\rangle \,dt\,dy
&=\int_{Q(a_2,b_2)}\Big[\langle Q\nabla D_h(wq),\nabla \varphi \rangle 
- q\langle QD_h(\nabla w),\nabla \varphi \rangle\\
& \qquad - (D_h q) \langle Q\nabla w, \nabla \varphi\rangle\Big]\,dt\,dy,
\end{align*}
we can adjust the terms in \eqref{eq1: Thm: weighted-gradient estimate bounded case} to obtain that
\begin{equation*}
\int_{Q(a_2,b_2)}[\langle Q\nabla u,\nabla\varphi\rangle+\varphi\partial_t u]\,dtdy
= \int_{Q(a_2,b_2)} f\varphi \,dt dy
+\int_{Q(a_2,b_2)} \langle h,\nabla \varphi\rangle \,dtdy,
\end{equation*}
where
\begin{align*}
u=&D_h(wq),\\
f=&\frac{k}{2} \vartheta'\vartheta^\frac{k-2}{2} w(D_h p)+\frac{k}{2} \vartheta'\vartheta^\frac{k-2}{2} p(D_h w) - \langle (D_hQ)\nabla w, \nabla q\rangle- \langle Q(D_h\nabla w), \nabla q\rangle\\
& + (D_h q)\, \mathrm{div}(Q\nabla w)+ q\langle D_h \nabla w, F\rangle + q\langle \nabla w, D_h F\rangle + (D_h q)\langle \nabla w,F\rangle-V w(D_h q)\\
&-V q(D_h w)- wq (D_h V)+(\partial_t D_h w)q+(\partial_t w)(D_h q),\\
h=& 2(D_h q) Q \nabla w-w (D_h Q) (\nabla q)+ q Q D_h\nabla w+ wF (D_hq)+ qF (D_hw)+wq (D_h F).
\end{align*}

We now want to apply Theorem \ref{Thm 3.7 KunzeLorenziRhandi} to the function $u$ and infer that there exists a constant $C$, depending only on $d$, $\eta$ and $k$, but not on $\norm{Q}_\infty$, such that
\begin{align*}
\norm{D_h(wq)}_\infty
\leq C \Big[ &\norm{D_h(wq)}_2 
+ \frac{k}{b_2-b_1} \norm{\vartheta^\frac{k-2}{2} w(D_hp)}_\frac{k}{2}
+ \frac{k}{b_2-b_1} \norm{\vartheta^\frac{k-2}{2} p(D_hw)}_\frac{k}{2} \\
&+\norm{\langle (D_hQ)\nabla w, \nabla q\rangle}_\frac{k}{2}
+\norm{\langle Q(D_h\nabla w), \nabla q\rangle}_\frac{k}{2}+ \norm{(D_h q) \mathrm{div}(Q\nabla w)}_\frac{k}{2}
\\
&+\norm{q\langle D_h \nabla w, F\rangle}_\frac{k}{2}
+\norm{q\langle \nabla w, D_h F\rangle}_\frac{k}{2}
+\norm{(D_h q)\langle \nabla w,F\rangle}_\frac{k}{2}+\norm{V w(D_h q)}_\frac{k}{2}\\
& +\norm{V q(D_h w)}_\frac{k}{2} +\norm{wq (D_h V)}_\frac{k}{2}
+ \norm{(\partial_t D_h w)q}_\frac{k}{2}+ \norm{(\partial_t w)(D_h q)}_\frac{k}{2}\\
&+ \norm{(D_h q)Q\nabla w}_k+\norm{ w (D_h Q) (\nabla q)}_k
+\norm{qQ D_h\nabla w}_k+\norm{wF(D_h q)}_k\\
&+\norm{qF(D_h w)}_k+\norm{wq D_h F}_k\Big].
\end{align*}
Summing over $h=1, \dots, d$ and since $\norm{\nabla(wq)}_\infty\geq \norm{w\nabla q}_\infty-\norm{q\nabla w}_\infty $ yields
\begin{align}\label{eq2: Thm: weighted-gradient estimate bounded case}
\norm{w\nabla q}_\infty
\leq C \Bigg[ &\norm{w\nabla q}_2+\norm{q\nabla w}_2 
+ \frac{k}{b_2-b_1} \norm{\vartheta^\frac{k-2}{2} w\nabla p}_\frac{k}{2}
+ \frac{k}{b_2-b_1} \norm{\vartheta^\frac{k-2}{2} p\nabla w}_\frac{k}{2} \notag\\
&+\norm{\langle \nabla Q \nabla w, \nabla q\rangle}_\frac{k}{2}
+\norm{Q D^2 w \nabla q}_\frac{k}{2}
+ \norm{(\nabla q) \mathrm{div}(Q\nabla w)}_\frac{k}{2}\notag\\
&+\norm{q (D^2 w) F}_\frac{k}{2}
+\norm{q\langle \nabla w, \nabla F\rangle}_\frac{k}{2}
+\norm{(\nabla q)\langle \nabla w,F\rangle}_\frac{k}{2}
+\norm{V w\nabla q}_\frac{k}{2}+\norm{V q\nabla w}_\frac{k}{2}\notag\\
& +\norm{wq \nabla V}_\frac{k}{2}
+ \norm{(\partial_t \nabla w)q}_\frac{k}{2}
+ \norm{(\partial_t w)(\nabla q)}_\frac{k}{2}
+ \norm{\langle Q\nabla w,\nabla q\rangle}_k
+\norm{ w (\nabla Q) (\nabla q)}_k\notag \\
&+\norm{qQ D^2 w}_k+\norm{w\langle \nabla q, F\rangle}_k
+\norm{q\langle \nabla w, F\rangle}_k
+\norm{wq \nabla F}_k\Bigg] +\norm{q\nabla w}_\infty.
\end{align}

We set
\[
P:=\int_{Q(a_2,b_2)}\frac{\abs{\nabla p}^2}{p}\,dt\, dy 
\]
and, for a sake of simplicity, we write $\Xi_i$ instead of $\Xi_i(a_2,b_2)$ to refer to $\int_{a_2}^{b_2}\xi_{W_i} (t,x)\, dt$ for $i=1,2$. 
We observe that $\Xi_1,\,\Xi_2<\infty$ by Proposition \ref{Prop: Time dependent Lyapunov functions are integrable with respect to pdy}. Moreover, thanks to Theorem \ref{Thm: similar to Thm 5.1,Lem 5.1, Thm 5.2 MPR}(a), we know that $P<\infty$.
Finally, we estimate the terms in the right hand side of \eqref{eq2: Thm: weighted-gradient estimate bounded case}. We start with $\norm{w\nabla q }_2$. Using H\"older's inequality and Hypothesis \ref{h.3}(c) one obtains

\begin{align*}
\norm{w\nabla q }_2^2
= \int_{Q(a_2,b_2)} w^2\abs{\nabla q}^2 dt\, dy
&\leq \norm{w\nabla q}_\infty \int_{Q(a_2,b_2)} \frac{\abs{\nabla q}}{\sqrt{q}}\sqrt{q} w\,dt\, dy\\
&\leq \norm{w\nabla q}_\infty \left(\int_{Q(a_2,b_2)} \frac{\abs{\nabla q}^2}{q}\,dt\, dy\right)^\frac{1}{2} \left(\int_{Q(a_2,b_2)} w^2 q\,dt\, dy\right)^\frac{1}{2}\\
&\leq c_1^\frac{k}{2} \norm{w\nabla q}_\infty \left(\int_{Q(a_2,b_2)} \frac{\abs{\nabla p}^2}{p}\,dt\, dy\right)^\frac{1}{2}\left(\int_{Q(a_2,b_2)} \xi_{W_1}(t,x)\,dt\right)^\frac{1}{2}\\
&= c_1^\frac{k}{2}\norm{w\nabla q}_\infty  P^\frac{1}{2} \Xi_1^\frac{1}{2}.
\end{align*}

Hence, we have
\[
\norm{w\nabla q}_2\leq c_1^\frac{k}{4} P^\frac{1}{4} \Xi_1^\frac{1}{4} \norm{w\nabla q}_\infty^\frac{1}{2}.
\]

Similarly, we get
\[
\norm{\vartheta^\frac{k-2}{2} w\nabla p}_\frac{k}{2}\leq c_1 P^\frac{1}{k} \Xi_1^\frac{1}{k}\norm{w\nabla q}_\infty^ \frac{k-2}{k},
\]

\[
\norm{\langle \nabla Q \nabla w, \nabla q\rangle}_\frac{k}{2}\leq  \eta^{-1} c_2 c_7 P^\frac{1}{k} \Xi_1^\frac{1}{k}\norm{w\nabla q}_\infty^\frac{k-2}{k} ,
\]

\[
\norm{Q D^2 w \nabla q}_\frac{k}{2}\leq c_3 P^\frac{1}{k} \Xi_1^\frac{1}{k}\norm{w\nabla q}_\infty^\frac{k-2}{k} ,
\]

\[
\norm{(\nabla q) \mathrm{div}(Q\nabla w)}_\frac{k}{2}\leq d(\eta^{-1}c_2 c_7+c_3) P^\frac{1}{k} \Xi_1^\frac{1}{k}\norm{w\nabla q}_\infty^\frac{k-2}{k},
\]
where we have applied here \eqref{eq:important}. Moreover,
\[
\norm{(\nabla q)\langle \nabla w,F\rangle}_\frac{k}{2}\leq \eta^{-1} c_2 c_6 P^\frac{1}{k} \Xi_2^\frac{1}{k}\norm{w\nabla q}_\infty^\frac{k-2}{k},
\]

\[
\norm{V w\nabla q}_\frac{k}{2}\leq c_5^2 P^\frac{1}{k} \Xi_2^\frac{1}{k}\norm{w\nabla q}_\infty^\frac{k-2}{k},
\]

\[
\norm{(\partial_t w)(\nabla q)}_\frac{k}{2}\leq c_4 P^\frac{1}{k} \Xi_1^\frac{1}{k}\norm{w\nabla q}_\infty^\frac{k-2}{k},
\]

\[
\norm{\langle Q\nabla w,\nabla q\rangle}_k\leq c_2 P^\frac{1}{2k} \Xi_1^\frac{1}{2k}\norm{w\nabla q}_\infty^\frac{k-1}{k},
\]

\[
\norm{ w (\nabla Q) (\nabla q)}_k\leq c_7 P^\frac{1}{2k} \Xi_1^\frac{1}{2k}\norm{w\nabla q}_\infty^\frac{k-1}{k},
\]

\[
\norm{w\langle \nabla q, F\rangle}_k\leq c_6 P^\frac{1}{2k} \Xi_2^\frac{1}{2k}\norm{w\nabla q}_\infty^\frac{k-1}{k}.
\]

Moreover, we estimate $\norm{q\nabla w}_2^2$ as follows:

\[
\norm{q\nabla w}_2^2
= \int_{Q(a_2,b_2)} q^2 \abs{\nabla w}^2\, dt\, dy
\leq \eta^{-2} c_2^2 \norm{wq}_{\infty} \int_{Q(a_2,b_2)} W_1^\frac{1}{k} q\, dt\, dy
 \leq \eta^{-2} c_2^2 \norm{wq}_{\infty}\Xi_1.
\]

Thus, we have

\[
\norm{q\nabla w}_2\leq \eta^{-1} c_2 \Xi_1^\frac{1}{2}\norm{wq}_\infty^ \frac{1}{2}.
\]

In a similar way, we obtain 

\[
\norm{\vartheta^\frac{k-2}{2} p\nabla w}_\frac{k}{2}\leq \eta^{-1} c_2 \Xi_1^\frac{2}{k}\norm{wq}_\infty^ \frac{k-2}{k} ,
\]

\[
\norm{q (D^2 w) F}_\frac{k}{2} \leq \eta^{-1} c_3 c_6  \Xi_2^\frac{2}{k}\norm{w q}_\infty^\frac{k-2}{k},
\]

\[
\norm{q\langle \nabla w, \nabla F\rangle}_\frac{k}{2}\leq \eta^{-1} c_2 c_8  \Xi_2^\frac{2}{k}\norm{w q}_\infty^\frac{k-2}{k},
\]

\[
\norm{V q\nabla w}_\frac{k}{2}\leq \eta^{-1} c_2 c_5^2 \Xi_2^\frac{2}{k}\norm{w q}_\infty^\frac{k-2}{k},
\]

\[
\norm{wq \nabla V}_\frac{k}{2}\leq c_9 \Xi_2^\frac{2}{k}\norm{w q}_\infty^\frac{k-2}{k},
\]

\[
\norm{(\partial_t\nabla w)q}_\frac{k}{2}\leq c_{11}  \Xi_1^\frac{2}{k}\norm{w q}_\infty^\frac{k-2}{k},
\]

\[
\norm{qQ D^2 w}_k\leq c_3  \Xi_1^\frac{1}{k}\norm{w q}_\infty^\frac{k-1}{k},
\]

\[
\norm{q\langle \nabla w, F\rangle}_k\leq \eta^{-1} c_2 c_6  \Xi_2^\frac{1}{k}\norm{w q}_\infty^\frac{k-1}{k},
\]

\[
\norm{wq \nabla F}_k\leq c_8  \Xi_2^\frac{1}{k}\norm{w q}_\infty^\frac{k-1}{k}.
\]
Finally, we get
\[
\norm{q\nabla w}_\infty\leq \norm{q}_\infty^\frac{k-1}{k} \norm{q(1+\abs{\nabla w}^2)^\frac{k}{2}}_\infty^\frac{1}{k}.
\]
We now estimate $\norm{q(1+\abs{\nabla w}^2)^\frac{k}{2}}_\infty$ by applying Theorem \ref{theo-estimate bounded case} with $w$ replaced by $\tilde{w}=(1+\abs{\nabla w}^2)^\frac{k}{2}$. 
First, we check the assumptions using Hypothesis \ref{h.3}(c):

\[
\tilde{w}^\frac{2}{k}
= 1+\abs{\nabla w}^2 \leq 1+ \eta^{-2} c_2^2 W_1^\frac{1}{k}\leq (1+ \eta^{-2} c_2^2) W_1^\frac{2}{k},
\]

\[
|Q\nabla \tilde{w}|
= k (1+\abs{\nabla w}^2)^\frac{k-2}{2} |(Q D^2 w )\nabla w|
\leq k \tilde{w}^\frac{k-2}{k} |Q D^2 w| |\nabla w|
\leq kc_3 \tilde{w}^\frac{k-1}{k} W_1^\frac{1}{k},
\]

\begin{align*}
|\mathrm{div} (Q\nabla \tilde{w})|
\leq &d |\nabla(Q\nabla\tilde{w})|
\leq d |\nabla Q| |\nabla \tilde{w}| + d |Q D^2\tilde{w}|\\
\leq &d |\nabla Q| k \tilde{w}^\frac{k-2}{k} |D^2 w||\nabla w| 
+d \Big[(k-2) \tilde{w}^{-\frac{2}{k}} |\nabla \tilde{w}| |QD^2 w| |\nabla w|\\
&+ k \tilde{w}^\frac{k-2}{k} |D^3 w| |Q\nabla w| +k \tilde{w}^\frac{k-2}{k} |QD^2 w| |D^2 w|\Big]\\
\leq & k d [\eta^{-2} c_2 c_3 c_7+(k-1)\eta^{-1}c_3^2+c_2 c_{10}]\tilde{w}^\frac{k-2}{k} W_1^\frac{2}{k},
\end{align*}

\[
|\partial_t \tilde{w}|\leq k (1+\abs{\nabla w}^2)^\frac{k-2}{2} |\nabla w| |\partial_t \nabla w|
\leq k \eta^{-1} c_2 c_{11} \tilde{w}^\frac{k-2}{k} W_1^\frac{2}{k},
\]

\[
\tilde{w}^\frac{1}{k} V^\frac{1}{2}
\leq (1+|\nabla w|)V^\frac{1}{2}
\leq (c_5+\eta^{-1} c_2 c_5)W_2^\frac{1}{k},
\]

\[
\tilde{w}^\frac{1}{k} |F|
\leq (1+|\nabla w|)|F|
\leq (c_6+\eta^{-1}c_2c_6)W_2^\frac{1}{k}.
\]

Moreover, $\tilde{w}^{-2}\nabla \tilde{w}$ and $\tilde{w}^{-2}\partial_t \tilde{w}$ are bounded on $Q(a_0,b_0)$ as we assume in Hypothesis \ref{h.3} that the functions $(\nabla w)^{-k-1} D^2 w$ and $(\nabla w)^{-k-1} \partial_t \nabla w$ are bounded.
Hence, the assumptions of Theorem \ref{theo-estimate bounded case} holds true with $w$ replaced by $\tilde{w}$ and with the constants $c_1, \dots, c_6$ replaced, respectively, by $1+\eta^{-2}c_2^2$, $kc_3$, $k d [\eta^{-2} c_2 c_3 c_7+(k-1)\eta^{-1}c_3^2+c_2 c_{10}]$, $k \eta^{-1} c_2 c_{11}$, $c_5+\eta^{-1} c_2 c_5$ and $c_6+\eta^{-1}c_2c_6$.
Thus, we obtain that 
\begin{align*}
\norm{q(1+\abs{\nabla w}^2)^\frac{k}{2}}_\infty
\leq C\Bigg[&c_2^k \sup_{t\in (a_2,b_2)}\xi_{W_1}(t,x)
+\left(c_3^k+\frac{c_2^k}{(b_2-b_1)^{\frac{k}{2}}}+c_2^{\frac{k}{2}}c_3^{\frac{k}{2}}c_7^{\frac{k}{2}}+c_2^{\frac{k}{2}}c_{10}^{\frac{k}{2}}+c_2^\frac{k}{2}c_{11}^\frac{k}{2}\right)\Xi_1\notag\\
&  +\left(c_6^k + c_2^kc_6^k+ c_3^\frac{k}{2} c_6^\frac{k}{2}+c_2^\frac{k}{2} c_3^\frac{k}{2}c_6^\frac{k}{2}+ c_5^k+c_2^kc_5^k\right)\Xi_2\Bigg].
\end{align*}
Consequently, if we set
\[
\overline{M}:= \sup_{t\in (a_2,b_2)}\xi_{W_1}(t,x),
\]
we estimate the last term in the right hand side of \eqref{eq2: Thm: weighted-gradient estimate bounded case} as follows
\begin{align*}
\norm{q\nabla w}_\infty
\leq C\Bigg[&c_2 \overline{M}^\frac{1}{k}
+\left(c_3+\frac{c_2}{(b_2-b_1)^{\frac{1}{2}}}+c_2^{\frac{1}{2}}c_3^{\frac{1}{2}}c_7^{\frac{1}{2}}+c_2^{\frac{1}{2}}c_{10}^{\frac{1}{2}}+c_2^\frac{1}{2}c_{11}^\frac{1}{2}\right)\Xi_1^\frac{1}{k}\notag\\
&  +\left(c_6 + c_2c_6+ c_3^\frac{1}{2} c_6^\frac{1}{2}+c_2^\frac{1}{2} c_3^\frac{1}{2}c_6^\frac{1}{2}+ c_5+c_2c_5\right)\Xi_2^\frac{1}{k}\Bigg]\norm{wq}_\infty^\frac{k-1}{k}.
\end{align*}

Combining \eqref{eq2: Thm: weighted-gradient estimate bounded case} with the above estimates yields
\begin{align*}
\norm{w\nabla q }_\infty
\leq &C c_1^\frac{k}{4} P^\frac{1}{4}\Xi_1^\frac{1}{4} \norm{w\nabla q}_\infty^\frac{1}{2}
+CP^\frac{1}{2k}\left[(c_2+c_7)\Xi_1^\frac{1}{2k}+c_6 \Xi_2^\frac{1}{2k}\right] \norm{w\nabla q}_\infty^\frac{k-1}{k}\\
&+ CP^\frac{1}{k}\left[\left(\frac{c_1}{b_2-b_1}+c_2c_7+c_3+c_4\right)\Xi_1^\frac{1}{k}+(c_2c_6+c_5^2)\Xi_2^\frac{1}{k}\right] \norm{w\nabla q}_\infty^\frac{k-2}{k}
+ C c_2 \Xi_1^\frac{1}{2}\norm{w q}_\infty^\frac{1}{2}\\
&+C\left[\left(\frac{c_2}{b_2-b_1}+c_{11}\right) \Xi_1^\frac{2}{k}+(c_2c_5^2+c_3c_6+c_2c_8+c_9)\Xi_2^\frac{2}{k}\right]\norm{w q}_\infty^\frac{k-2}{k}\\
&+ C\Bigg[c_2 \overline{M}^\frac{1}{k} 
+ \left(c_3+\frac{c_2}{(b_2-b_1)^{\frac{1}{2}}}+c_2^{\frac{1}{2}}c_3^{\frac{1}{2}}c_7^{\frac{1}{2}}+c_2^{\frac{1}{2}}c_{10}^{\frac{1}{2}}+c_2^{\frac{1}{2}}c_{11}^{\frac{1}{2}}\right) \Xi_1^\frac{1}{k}\\
&\quad \quad +\left(c_6 + c_2c_6+ c_3^\frac{1}{2} c_6^\frac{1}{2}+c_2^\frac{1}{2} c_3^\frac{1}{2}c_6^\frac{1}{2}+ c_5+c_2c_5+c_8\right)\Xi_2^\frac{1}{k}\Bigg]\norm{w q}_\infty^\frac{k-1}{k}.
\end{align*}
We observe that, by Young's inequality, we find
\[
C c_1^\frac{k}{4} P^\frac{1}{4}\Xi_1^\frac{1}{4} \norm{w\nabla q}_\infty^\frac{1}{2}
\leq C^2 c_1^\frac{k}{2} P^\frac{1}{2}\Xi_1^\frac{1}{2}+\frac{1}{4}\norm{w\nabla q}_\infty.
\]
Then, setting
\begin{align*}
X:=&\norm{w\nabla q}_\infty^\frac{1}{k},\\
\alpha:=&C^2 c_1^\frac{k}{2} P^\frac{1}{2}\Xi_1^\frac{1}{2} + C c_2 \Xi_1^\frac{1}{2}\norm{w q}_\infty^\frac{1}{2}
+C\Bigg[\left(\frac{c_2}{b_2-b_1}+c_{11}\right)  \Xi_1^\frac{2}{k}\\
&\quad \quad +(c_2c_5^2+c_3c_6+c_2c_8+c_9)\Xi_2^\frac{2}{k}\Bigg]\norm{w q}_\infty^\frac{k-2}{k}\\
&+ C\Bigg[c_2 \overline{M}^\frac{1}{k} 
+ \left(c_3+\frac{c_2}{(b_2-b_1)^{\frac{1}{2}}}+c_2^{\frac{1}{2}}c_3^{\frac{1}{2}}c_7^{\frac{1}{2}}+c_2^{\frac{1}{2}}c_{10}^{\frac{1}{2}}+c_2^{\frac{1}{2}}c_{11}^{\frac{1}{2}}\right) \Xi_1^\frac{1}{k}\\
&\quad \quad +\left(c_6 + c_2c_6+ c_3^\frac{1}{2} c_6^\frac{1}{2}+c_2^\frac{1}{2} c_3^\frac{1}{2}c_6^\frac{1}{2}+ c_5+c_2c_5+c_8\right)\Xi_2^\frac{1}{k}\Bigg]\norm{w q}_\infty^\frac{k-1}{k},\\
\beta:=&CP^\frac{1}{2k}\Big[(c_2+c_7)\Xi_1^\frac{1}{2k}+c_6 \Xi_2^\frac{1}{2k}\Big],\\
\gamma:=&CP^\frac{1}{k}\left[\left(\frac{c_1}{b_2-b_1}+c_2c_7+c_3+c_4\right)\Xi_1^\frac{1}{k}+(c_2c_6+c_5^2)\Xi_2^\frac{1}{k}\right],
\end{align*}
we derive that
\begin{equation}\label{eq1: estimate Dp in case of bounded diff coeff}
X^k \leq \frac{4}{3}\alpha+\frac{4}{3}\beta X^{k-1}+\frac{4}{3}\gamma X^{k-2} .
\end{equation}
We now prove that it leads to
\begin{equation}\label{eq2: estimate Dp in case of bounded diff coeff}
X\leq \frac{4}{3}\beta +\sqrt{\frac{4}{3}\gamma} +\bigg(\frac{4}{3}\alpha\bigg)^\frac{1}{k}.
\end{equation}
We consider the function
\begin{equation*}
f(r):= r^k-\frac{4}{3}\beta r^{k-1}-\frac{4}{3}\gamma r^{k-2}-\frac{4}{3}\alpha= r^{k-2} \left(r^2-\frac{4}{3} \beta r -\frac{4}{3}\gamma\right)-\frac{4}{3}\alpha=: r^{k-2} g(r)-\frac{4}{3}\alpha.
\end{equation*}
First, we show that $f$ is increasing in $\left(\frac{4}{3}\beta+ \sqrt{\frac{4}{3}\gamma}+ (\frac{4}{3}\alpha)^\frac{1}{k}, \infty \right) $. This can be seen by computing the first derivative:
\begin{equation*}
f'(r)=(k-2)r^{k-3}g(r)+r^{k-2} g'(r).
\end{equation*}
Since the function $g$ in positive and increasing in $\left(\frac{4}{3}\beta+ \sqrt{\frac{4}{3}\gamma}+ (\frac{4}{3}\alpha)^\frac{1}{k}, \infty \right) $, it follows that $f'(r)\geq 0$ in the given interval, so $f$ is increasing.\\
Second, we have that
\begin{eqnarray*}
f\left( \frac{4}{3}\beta+ \sqrt{\frac{4}{3}\gamma}+ \left(\frac{4}{3}\alpha\right)^\frac{1}{k}\right)
& = & \left( \frac{4}{3}\beta+ \sqrt{\frac{4}{3}\gamma}+ \left(\frac{4}{3}\alpha\right)^\frac{1}{k}\right)^{k-2} \left[\left( \frac{4}{3}\beta+ \sqrt{\frac{4}{3}\gamma}
+ \left(\frac{4}{3}\alpha\right)^\frac{1}{k}\right)^2 \right.\\
& &- \left.\frac{4}{3} \beta \left( \frac{4}{3}\beta+ \sqrt{\frac{4}{3}\gamma}+ \left(\frac{4}{3}\alpha\right)^\frac{1}{k}\right) - \frac{4}{3} \gamma \right] -\frac{4}{3} \alpha\\
& = &  \left( \frac{4}{3}\beta+ \sqrt{\frac{4}{3}\gamma}+ \left(\frac{4}{3}\alpha\right)^\frac{1}{k}\right)^{k-2} \left[ \left( \frac{4}{3} \alpha\right)^\frac{2}{k}+ \frac{8\sqrt{3}}{9} \sqrt{\gamma} \beta\right. \\
& &\left. + \frac{4\sqrt{3}}{3}  \left(\frac{4}{3}\right)^{\frac{1}{k}} \alpha^\frac{1}{k} \left( \frac{\sqrt{3}}{3} \beta +\sqrt{\gamma}\right)\right]-\frac{4}{3} \alpha\\
&  > & \left( \frac{4}{3} \alpha\right) ^\frac{k-2}{k}\left( \frac{4}{3} \alpha\right)^\frac{2}{k}-\frac{4}{3} \alpha
=0.
\end{eqnarray*}
On one hand, from the previous observations we deduce that $f(r)>0$ if $r>\frac{4}{3}\beta+ \sqrt{\frac{4}{3}\gamma}+ (\frac{4}{3}\alpha)^\frac{1}{k}$. On the other hand, by \eqref{eq1: estimate Dp in case of bounded diff coeff}, $f(X)\leq 0$. Thus, we conclude that \eqref{eq2: estimate Dp in case of bounded diff coeff} holds true. Consequently, there exists a positive constant $K_1$ such that
\[
\norm{w\nabla q}_\infty
\leq K_1 \left(\alpha+\beta^k+\gamma^\frac{k}{2}\right).
\]
By plugging in the previous inequality the definition of $\alpha, \beta,\gamma$ we get
\begin{align*}
\norm{w\nabla q}_{L^\infty(Q(a_2,b_2))}
\leq C\Bigg\lbrace&c_2 \Xi_1^\frac{1}{2}\norm{w q}_{L^\infty(Q(a_2,b_2))}^\frac{1}{2}
+\Bigg[\left(\frac{c_2}{b_2-b_1}+c_{11}\right)  \Xi_1^\frac{2}{k}\\
&+(c_2c_5^2+c_3c_6+c_2c_8+c_9)\Xi_2^\frac{2}{k}\Bigg]\norm{w q}_{L^\infty(Q(a_2,b_2))}^\frac{k-2}{k}\\
&+ \Bigg[c_2 \overline{M}^\frac{1}{k} 
+ \left(c_3+\frac{c_2}{(b_2-b_1)^{\frac{1}{2}}}+c_2^{\frac{1}{2}}c_3^{\frac{1}{2}}c_7^{\frac{1}{2}}+c_2^{\frac{1}{2}}c_{10}^{\frac{1}{2}}+c_2^\frac{1}{2}c_{11}^\frac{1}{2}\right) \Xi_1^\frac{1}{k}\\
&\quad \quad +\left(c_6 + c_2c_6+ c_3^\frac{1}{2} c_6^\frac{1}{2}+c_2^\frac{1}{2} c_3^\frac{1}{2}c_6^\frac{1}{2}+ c_5+c_2c_5+c_8\right)\Xi_2^\frac{1}{k}\Bigg]\norm{w q}_{L^\infty(Q(a_2,b_2))}^\frac{k-1}{k}\\
&+ \Bigg[ \left(c_1^\frac{k}{2}+\frac{c_1^\frac{k}{2}}{(b_2-b_1)^\frac{k}{2}}+c_2^k+c_2^\frac{k}{2}c_7^\frac{k}{2}+c_3^\frac{k}{2} +c_7^k +c_4^\frac{k}{2} \right)\Xi_1^\frac{1}{2}\\
&\quad \quad +(c_6^k+c_2^\frac{k}{2}c_6^\frac{k}{2}+c_5^k)\Xi_2^\frac{1}{2}\Bigg]P^\frac{1}{2}\Bigg\rbrace.
\end{align*}
Letting $a_2\downarrow a$ and $b_2\uparrow b$ and considering that $\int_{a}^{b} \xi_{W_j}(t,x)\,dt\leq \int_{a_0}^{b_0} \xi_{W_j}(t,x)\,dt$ for $j=1,2$, we gain
\begin{align}\label{eq3: Thm: weighted-gradient estimate bounded case}
|w(t,&y)  \nabla p(t,x,y)|
\leq C\Bigg\lbrace c_2 \Xi_1(a_0,b_0)^\frac{1}{2}\norm{w p}_{L^\infty(Q(a,b))}^\frac{1}{2}
+\Bigg[\left(\frac{c_2}{b-b_1}+c_{11}\right) \Xi_1(a_0,b_0)^\frac{2}{k}\notag\\
&+(c_2c_5^2+c_3c_6+c_2c_8+c_9)\Xi_2(a_0,b_0)^\frac{2}{k}\Bigg]\norm{w p}_{L^\infty(Q(a,b))}^\frac{k-2}{k}
+ \Bigg[c_2 \left(\sup_{t\in (a_0,b_0)}\xi_{W_1}(t,x)\right)^\frac{1}{k}\notag\\
& + \Bigg(c_3+\frac{c_2}{(b-b_1)^{\frac{1}{2}}}+c_2^{\frac{1}{2}}c_3^{\frac{1}{2}}c_7^{\frac{1}{2}}+c_2^{\frac{1}{2}}c_{10}^{\frac{1}{2}}+c_2^\frac{1}{2}c_{11}^\frac{1}{2}\Bigg)\Xi_1(a_0,b_0)^\frac{1}{k}+\Big(c_6 + c_2c_6+ c_3^\frac{1}{2} c_6^\frac{1}{2}+c_2^\frac{1}{2} c_3^\frac{1}{2}c_6^\frac{1}{2}+ c_5\notag\\
& +c_2c_5+c_8\Big)\Xi_2(a_0,b_0)^\frac{1}{k}\Bigg]\norm{w p}_{L^\infty(Q(a,b))}^\frac{k-1}{k}
+ \Bigg[ \left(c_1^\frac{k}{2}+\frac{c_1^\frac{k}{2}}{(b-b_1)^\frac{k}{2}}+c_2^k+c_2^\frac{k}{2}c_7^\frac{k}{2}+c_3^\frac{k}{2} +c_7^k +c_4^\frac{k}{2}\right)\times\notag\\
&\times \Xi_1(a_0,b_0)^\frac{1}{2}+(c_6^k+c_2^\frac{k}{2}c_6^\frac{k}{2}+c_5^k)\Xi_2(a_0,b_0)^\frac{1}{2}\Bigg]\left(\int_{Q(a,b)}\frac{\abs{\nabla p}^2}{p}\,dt\, dy\right)^\frac{1}{2},
\end{align}
for all $(t,y)\in (a_1,b_1)\times \R^d$ and fixed $x\in\R^d$.

To finish the proof, it remains to remove the additional assumption on the weight $w$.
We define
\[
w_\varepsilon:= \frac{w}{1+\varepsilon w}, \quad \varepsilon >0.
\]
Since 
\[
D_i w_\varepsilon=(1+\varepsilon w)^{-2} D_i w
\]
and
\[
D_{ij}w_\varepsilon=(1+\varepsilon w)^{-2} D_{ij}w-2\varepsilon (1+\varepsilon w)^{-3} (D_i w)( D_j w),
\]
for all $i,j=1, \dots, d$, then by Hypothesis \ref{h.3}(b) it follows that $w_\varepsilon$, along with its first order partial derivatives and its second order partial derivatives of the form $D_{ij}w$ and $\partial_t D_i w$ are bounded.
If we now check Hypothesis \ref{h.3}(c) we have that

\[
w_\varepsilon\leq w\leq c_1^\frac{k}{2} W_1^\frac{1}{2},
\]

\[
|Q\nabla w_\varepsilon|= (1+\varepsilon w)^{-2} |Q\nabla w|
\leq c_2 W_1^\frac{1}{2k},
\]

\[
|QD^2 w_\varepsilon|
\leq (1+\varepsilon w)^{-2}|QD^2 w|+2\varepsilon (1+\varepsilon w)^{-3} |Q\nabla w| |\nabla w|
\leq (c_3 + 2\eta^{-1}c_2^2)W_1^\frac{1}{k},
\]

\begin{align*}
|D^3 w_\varepsilon|
&\leq (1+\varepsilon w)^{-2} |D^3 w|+6\varepsilon(1+\varepsilon w)^{-3}|\nabla w| |D^2 w|+6\varepsilon^2 (1+\varepsilon w)^{-4} |\nabla w|^3\\
&\leq (c_{10} +6\eta^{-2}c_2c_3+6\eta^{-3}c_2^3) W_1^\frac{3}{2k},
\end{align*}

\[
|\partial_t w_\varepsilon|= (1+\varepsilon w)^{-2} |\partial_t w|
\leq c_4 w^\frac{k-2}{k} W_1^\frac{1}{2k}
\]
and
\[
|\partial_t \nabla w_\varepsilon|
\leq (1+\varepsilon w)^{-2} |\partial_t \nabla w|+2\varepsilon (1+\varepsilon w)^{-3} |\nabla w| |\partial_t w|
\leq (c_{11}+2\eta^{-1}c_2 c_4) W_1^\frac{1}{k}.
\]

This shows that $w_\varepsilon$ satisfies Hypothesis \ref{h.3}(c) with the same constants $c_1, c_2, c_4, c_5, c_6, c_7, c_8, c_9$ and with the constants $c_3$, $c_{10}$, $c_{11}$ replaced, respectively, by $c_3+2\eta^{-1}c_2^2$, $c_{10}+6\eta^{-2}c_2c_3+6\eta^{-3}c_2^3$ and $c_{11}+2\eta^{-1}c_2 c_4$.

Thus, the estimate \eqref{eq3: Thm: weighted-gradient estimate bounded case} shown in the first part of the proof holds true with the function $w_\varepsilon$ instead of $w$ and with the constants on the right hand side that do not depend on $\varepsilon$.
We finally let $\varepsilon\to 0$ to gain the desired inequality \eqref{weighted-gradient estimate bounded case}.
\end{proof}

\begin{rem}
From the above proof one can see that the constants $B_i,\,i=1,\ldots ,6$ are given by
\begin{align}\label{def: constants B_i}
B_1&=c_2,\notag\\
B_2&=\frac{c_2}{b-b_1}+c_2c_4+c_{11},\notag \\
B_3&=c_2c_5^2+c_3c_6+c_2^2c_6+c_2c_8+c_9,\notag\\
B_4&= c_3+c_2^2+\frac{c_2}{(b-b_1)^{\frac{1}{2}}}+c_2^{\frac{1}{2}}c_3^{\frac{1}{2}}c_7^{\frac{1}{2}}+c_2^{\frac{3}{2}}c_7^{\frac{1}{2}}+c_2^{\frac{1}{2}}c_{10}^{\frac{1}{2}}+c_2c_3^{\frac{1}{2}}+c_2c_4^\frac{1}{2}+c_2^\frac{1}{2}c_{11}^\frac{1}{2}, \notag\\
B_5&= c_6 + c_2c_6+ c_3^\frac{1}{2} c_6^\frac{1}{2}+c_2c_6^{\frac{1}{2}}+c_2^\frac{1}{2} c_3^\frac{1}{2}c_6^\frac{1}{2}+c_2^{\frac{3}{2}}c_6^{\frac{1}{2}}+ c_5+c_2c_5+c_8, \notag\\
B_6&= c_1^\frac{k}{2}+\frac{c_1^\frac{k}{2}}{(b-b_1)^\frac{k}{2}} +c_2^k+c_2^\frac{k}{2}c_7^\frac{k}{2}+c_3^\frac{k}{2} +c_7^k +c_4^\frac{k}{2}, \notag\\
B_7&= c_6^k+c_2^\frac{k}{2}c_6^\frac{k}{2}+c_5^k.
\end{align}
\end{rem}

\section{Results for general diffusion coefficients}

\subsection{Approximation of the coefficients}\label{s.approx}

We approximate the operator $A$ with a family of operators $A_n$ with bounded diffusion coefficients in order to apply the results from the previous section.

To that end, we approximate the diffusion matrix $Q$ as follows. Picking a function $\varphi\in C_c^\infty(\R)$ such that $0\le \varphi\le 1,\,\varphi\equiv 1$ in $(-1,1)$, $\varphi\equiv 0$ in $\R\setminus (-2,2)$ and $\abs{s\varphi'(s)}\leq 2$ for all $s\in\R$, we set
\begin{equation*}
\varphi_n(x):= \varphi(W_1(t_0,x)/n),
\end{equation*}
where $W_1$ is the Lyapunov function from Hypothesis \ref{h.3}. The constant $t_0\in (0,T)$ will be chosen later on.
We then put
\begin{equation*}
q_{ij}^{(n)}(x):= \varphi_n(x)q_{ij}(x)+(1-\varphi_n(x))\eta \delta_{ij},
\end{equation*}
where $\delta_{ij}$ is the Kronecker delta. Replacing $Q$ with $Q_n:=(q_{ij}^{(n)})$ we approximate $A$ with the operators $A_n$ defined by
\begin{equation*}
A_n = {\rm div}(Q_n\nabla )+F\cdot \nabla-V.
\end{equation*}

\begin{lemma}\label{Lemma: A_n satisfies h.1}
For every $n\in\N$ the diffusion coefficients $q_{ij}^{(n)}$ and their first order spatial derivatives are bounded on $\R^d$. Moreover, the operator $A_n$ satisfies Hypothesis \ref{h.1} and if $W$ is a time dependent Lyapunov function for the operator $\partial_t+A$ such that $|\nabla W|$ is bounded on $[0,T]\times B_R$ for all $R>0$, then $W$ is a time dependent Lyapunov function for $\partial_t+A_n$.
\end{lemma}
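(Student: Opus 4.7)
The plan is to verify the three assertions in turn, exploiting the fact that $\varphi_n$ is compactly supported: since $W_1$ is a time dependent Lyapunov function one has $W_1(t_0,x)\to\infty$ as $|x|\to\infty$, so $\varphi_n$ vanishes outside the compact set $\overline{K_n}$, where $K_n := \{x\in\R^d : W_1(t_0,x)<2n\}$.

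For the \emph{boundedness of the coefficients}: off $\overline{K_n}$ we have $q_{ij}^{(n)}=\eta\delta_{ij}$, while on the compact $\overline{K_n}$ boundedness of $q_{ij}^{(n)}$ follows by continuity. For the spatial derivatives, one writes
\begin{equation*}
D_k q_{ij}^{(n)} = (D_k\varphi_n)(q_{ij}-\eta\delta_{ij}) + \varphi_n D_k q_{ij}
\end{equation*}
and notes that $D_k\varphi_n = \varphi'(W_1(t_0,x)/n)\,D_k W_1(t_0,x)/n$. The assumption $|s\varphi'(s)|\le 2$ gives $|\varphi'(W_1/n)| \le 2n/W_1\le 2$ on its support (where $W_1\ge n$), and $\nabla W_1(t_0,\cdot)$ is continuous, hence bounded on $\overline{K_n}$. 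Combined with continuity of $q_{ij}$ and $D_k q_{ij}$ on $\overline{K_n}$, this yields the desired boundedness.

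For \emph{Hypothesis \ref{h.1} applied to $A_n$}: ellipticity is immediate from
\begin{equation*}
\xi^T Q_n(x)\xi = \varphi_n(x)\,\xi^T Q(x)\xi + (1-\varphi_n(x))\eta|\xi|^2 \ge \eta|\xi|^2.
\end{equation*}
The condition $\eta\Delta Z + F\cdot\nabla Z - VZ \le M$ does not involve $Q$ and is therefore automatic, and similarly for $Z_0$. As for $A_n Z$: off $\overline{K_n}$ we have $A_n Z = \eta\Delta Z + F\cdot\nabla Z - VZ \le M$, while on the compact $\overline{K_n}$, $A_n Z$ is continuous, hence bounded by some $M_n$; taking $\widetilde M:=\max(M, M_n)$ gives $A_n Z \le \widetilde M$ on $\R^d$. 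Analogously for $Z_0$.

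For the \emph{Lyapunov claim for $\partial_t+A_n$}: expanding $\mathrm{div}(Q_n\nabla W)$ and collecting terms yields the key identity
\begin{equation*}
\partial_t W + A_n W = \varphi_n(\partial_t W + AW) + (1-\varphi_n)(\partial_t W + \eta\Delta W + F\cdot\nabla W - VW) + \nabla\varphi_n\cdot (Q-\eta I)\nabla W.
\end{equation*}
Since $\varphi_n,\,1-\varphi_n\in[0,1]$, inequalities \eqref{eq1: definition time dependent Lyapunov functions}--\eqref{eq2: definition time dependent Lyapunov functions} bound the convex combination of the first two summands by $h(t)W$. The remainder is supported in $\overline{K_n}$, on which $|\nabla\varphi_n|$ is bounded by the previous step, $|Q-\eta I|$ is continuous hence bounded, and $|\nabla W|$ is bounded by assumption. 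Writing this uniform bound as a constant $C_n$ and using $W\ge 1$ (which holds in all applications of this lemma, cf.\ Hypothesis \ref{h.3}) yields $\partial_t W + A_n W \le (h(t)+C_n)W$ with $h+C_n$ integrable near $0$; since the second Lyapunov inequality \eqref{eq2: definition time dependent Lyapunov functions} is unchanged for $A_n$, one concludes that $W$ is a time dependent Lyapunov function for $\partial_t+A_n$ (with respect to $Z$ itself, by the previous step). The principal technical point, and the reason for the hypothesis on $|\nabla W|$, is precisely the control of the correction term $\nabla\varphi_n\cdot(Q-\eta I)\nabla W$; the cutoff built from $W_1$ ensures this remainder is concentrated on a set where all relevant data are bounded, even though $Q$ itself may be unbounded at infinity.
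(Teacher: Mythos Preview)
Your proof is correct and follows essentially the same strategy as the paper's: ellipticity via the convex combination, and for the Lyapunov claim the key decomposition
\[
\partial_t W + A_n W = \varphi_n\, LW + (1-\varphi_n)\bigl(\partial_t W+\eta\Delta W+F\cdot\nabla W-VW\bigr)+\nabla\varphi_n\cdot(Q-\eta I)\nabla W,
\]
with the remainder controlled on the compact support of $\nabla\varphi_n$. Your explicit appeal to $W\ge 1$ to absorb the constant $C_n$ into $h$ is a subtlety the paper's proof glosses over (it simply asserts the existence of $\tilde h\in L^1$), and your direct treatment of $A_n Z$ via continuity on the compact set $\overline{K_n}$ is slightly more economical than the paper's, which applies the same decomposition formula to $Z$ as to $W$.
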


\begin{proof}
Clearly, since $\lim_{\abs{x}\to\infty} W_1(t_0,x)=+\infty$, the functions $\varphi_n$ vanish outside a compact set. As a consequence, the coefficients $q_{ij}^{(n)}$ and their spatial derivatives $D_k q_{ij}^{(n)}$ are bounded on $\R^d$ for all $i,j,k=1, \dots, d$.
We now check Hypothesis \ref{h.1}. First, we observe that $Q_n$ is symmetric and, thanks to the uniformly ellipticity of $Q$, we get
\[
\sum_{i,j=1}^d q_{ij}^{(n)}(x)\xi_i\xi_j
= \varphi_n(x) \sum_{i,j=1}^d q_{ij}(x)\xi_i\xi_j + \eta (1-\varphi_n(x)) \abs{\xi}^2
\ge \eta |\xi|^2 
\]
for all $x,\,\xi\in \R^d$. It remains to prove that $A_nZ(x)\le M_1$ for a certain constant $M_1\geq 0$ and for all $x\in\R^d$. Let $x\in\R^d$. Then
\begin{align*}
A_nZ(x)
=& {\rm div}(Q_n\nabla Z(x))+F(x)\cdot \nabla Z(x)-V(x)Z(x)\\
=& \varphi_n(x) {\rm div}(Q\nabla Z(x)) + Q \nabla \varphi_n(x)\cdot \nabla Z(x)-\eta \nabla \varphi_n(x)\cdot \nabla Z(x)\\
&+\eta (1- \varphi_n(x))\Delta Z(x)+F(x)\cdot \nabla Z(x) -V(x)Z(x)\\
=& \varphi_n(x) AZ(x) +(1- \varphi_n(x))(\eta\Delta Z(x)+F(x)\cdot \nabla Z(x)-V(x)Z(x)) \\
&+ Q \nabla \varphi_n(x)\cdot \nabla Z(x)-\eta \nabla \varphi_n(x)\cdot \nabla Z(x).
\end{align*}
For the first and the second term in the right hand side we apply Hypothesis \ref{h.1} that holds true for the operator $A$:
$$A_nZ(x)\leq M + Q \nabla \varphi_n(x)\cdot \nabla Z(x)-\eta \nabla \varphi_n(x)\cdot \nabla Z(x).$$
We can find a bound also for the last two terms since the functions $\varphi_n$ vanish outside a compact set. 
As a result we find a constant $M_1\ge 0$ such that 
$$A_nZ(x)\leq M_1\quad \forall x\in\R^d.$$
Finally, we check that if $W$ is a time dependent Lyapunov function for the operator $\partial_t+A$ such that $|\nabla W|$ is bounded on $[0,T]\times B_R$ for all $R>0$, then $W$ is a time dependent Lyapunov function for $\partial_t+A_n$. 
This can be seen by computing $\partial_t W(t,y)+A_n W(t,y)$ for $(t,y)\in (0,T)\times\R^d$:
\begin{align*}
\partial_t W(t,y)+A_n W(t,y)
=& \partial_t W(t,y) + {\rm div}(Q_n\nabla W(t,y))+F(y) \cdot \nabla W(t,y)-V(y)W(t,y)\\
=&  \varphi_n(y) LW(t,y) +(1- \varphi_n(y))(\partial_t W(t,y)+\eta\Delta W(t,y)+F(y) \cdot \nabla W(t,y)\\
&-V(y)W(t,y)) + Q \nabla \varphi_n(y)\cdot \nabla W(t,y)-\eta \nabla \varphi_n(y)\cdot \nabla W(t,y).
\end{align*}
Since $W$ is a time dependent Lyapunov function  for the operator $\partial_t+A$, the first two terms in the right hand side are bounded by $h(t)W(t,y)$: 
$$\partial_t W(t,y)+A_n W(t,y)\leq h(t)W(t,y)+ Q \nabla \varphi_n(y)\cdot \nabla W(t,y)-\eta \nabla \varphi_n(y)\cdot \nabla W(t,y) ,$$
where $h(t)\in L^1((0,T))$.
Furthermore, the last terms are bounded by a nonnegative constant because $\varphi_n$ vanishes outside a compact set and $|\nabla W|$ is bounded on $[0,T]\times B_R$ for all $R>0$.
Hence, there is a function $\tilde{h}(t)\in L^1((0,T))$ such that
$$\partial_t W(t,y)+A_n W(t,y)\leq \tilde{h}(t) W(t,y)$$
for all $(t,y)\in (0,T)\times\R^d$.
Then $W$ is a time dependent Lyapunov function also for $\partial_t+A_n$.
\end{proof}

As a consequence of the previous lemma, for every $n\in\N$ the semigroup generated by $A_n$ in $C_b(\R^d)$ is given by a kernel $p_n(t,x,y)$.
In order to show further properties about the operators $A_n$, we make the following assumptions.

\begin{hyp}\label{h.4}
Fix $T>0$, $x\in \R^d$ and $0<a_0<a<b<b_0<T$. Let us consider two time dependent Lyapunov functions $W_1,\,W_2$ with $W_1\le W_2$ and $|\nabla W_1|,|\nabla W_2|$ bounded on $[0,T]\times B_R$ for all $R>0$ and a weight function $1\le w\in C^{1,2}((0,T)\times \R^d)$ such that
\begin{enumerate}
\item there is $t_0\in (0,T)$ such that
\[
|Q| |\nabla W_1(t_0,\cdot)|\le c_{12} W_1(t_0, \cdot) w^{-1/k}W_1^{1/2k};
\]
\item there are $c_0>0$ and $\sigma\in (0,1)$ such that
$$W_2\leq c_0 Z^{1-\sigma}.$$
\end{enumerate}
\end{hyp}

We now prove that if the operator $A$ satisfies Hypothesis \ref{h.3}, then the same is true for the operators $A_n$ assuming further Hypothesis \ref{h.4}.

\begin{lemma}\label{Lemma: A_n satisfies h.3}
Assume that the operator $A$ satisfies Hypotheses \ref{h.3}(c) and \ref{h.4}(a). Then the operator $A_n$ satisfies Hypothesis \ref{h.3}(c) with the same constants $c_1, c_4, c_5, c_6, c_8, c_9, c_{10}, c_{11}$ and with $c_2, c_3, c_7$ being replaced, respectively, by $2 c_2, 2c_3$ and $\sqrt{3}(c_7+2 (1+\sqrt{d}) c_{12})$.
\end{lemma}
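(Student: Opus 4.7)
The plan is to verify each of the eleven conditions (i)--(xi) of Hypothesis~\ref{h.3}(c) for the operator $A_n$. Since $A_n$ differs from $A$ only in the diffusion matrix, the conditions (i), (iv), (v), (vi), (viii), (ix), (x), (xi), which involve only $w$, $V$, $F$ and their derivatives, continue to hold verbatim for $A_n$ with the same constants $c_1, c_4, c_5, c_6, c_8, c_9, c_{10}, c_{11}$. Therefore I only need to check (ii), (iii), and (vii) for $Q_n=\varphi_n Q+(1-\varphi_n)\eta I$.

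For (ii) and (iii) I would exploit the elementary consequence of uniform ellipticity: for any $v\in\R^d$, $\eta|v|^2\le \langle Qv,v\rangle\le |Qv||v|$, so $\eta|v|\le|Qv|$; applied column-by-column, this gives $\eta|M|\le|QM|$ for every matrix $M$. Writing $Q_n\nabla w=\varphi_n Q\nabla w+(1-\varphi_n)\eta\nabla w$ with $0\le\varphi_n\le 1$, the triangle inequality together with this estimate yields
\[
|Q_n\nabla w|\le|Q\nabla w|+\eta|\nabla w|\le 2|Q\nabla w|\le 2c_2 W_1^{1/(2k)},
\]
which is (ii) with constant $2c_2$. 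The same argument applied column-wise to $Q_n D^2 w$ produces (iii) with constant $2c_3$.

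The more delicate part is (vii). I would expand $D_k q_{ij}^{(n)}=\varphi_n D_k q_{ij}+(D_k\varphi_n)q_{ij}-(D_k\varphi_n)\eta\delta_{ij}$, square and sum, using $(a+b+c)^2\le 3(a^2+b^2+c^2)$, to obtain
\[
|\nabla Q_n|\le\sqrt{3}\bigl(|\nabla Q|+|\nabla\varphi_n||Q|+\sqrt{d}\,\eta|\nabla\varphi_n|\bigr).
\]
To control $|\nabla\varphi_n|$ I would differentiate $\varphi_n(x)=\varphi(W_1(t_0,x)/n)$ and use $|s\varphi'(s)|\le 2$, which yields $|\nabla\varphi_n(x)|\le 2|\nabla W_1(t_0,x)|/W_1(t_0,x)$ (note that on the support of $\varphi'$ we have $W_1(t_0,x)/n\ge 1$, so the division is legitimate). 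Hypothesis~\ref{h.4}(a), $|Q||\nabla W_1(t_0,\cdot)|\le c_{12} W_1(t_0,\cdot)\,w^{-1/k}W_1^{1/(2k)}$, then gives $|\nabla\varphi_n||Q|\le 2c_{12}\,w^{-1/k}W_1^{1/(2k)}$, while the trivial bound $\eta\le|Q|$ produces $\sqrt{d}\,\eta|\nabla\varphi_n|\le 2\sqrt{d}\,c_{12}\,w^{-1/k}W_1^{1/(2k)}$. Combining with $|\nabla Q|\le c_7\,w^{-1/k}W_1^{1/(2k)}$ from Hypothesis~\ref{h.3}(c)(vii) yields the announced constant $\sqrt{3}\bigl(c_7+2(1+\sqrt{d})c_{12}\bigr)$.

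The main obstacle is the bookkeeping in (vii): the three-term splitting of $D_k q_{ij}^{(n)}$ must be chosen so as to expose separately the genuine $\nabla Q$ contribution, the contribution of $\nabla\varphi_n$ multiplied by $Q$, and the $\eta I$ trace carrying the factor $\sqrt{d}$ after summing over the diagonal. Hypothesis~\ref{h.4}(a) is tailor-made so that the quotient $|Q||\nabla W_1(t_0,\cdot)|/W_1(t_0,\cdot)$ has precisely the growth in $w$ and $W_1$ demanded by (vii); without it one could not absorb the cut-off derivative $\nabla\varphi_n$ into the required right-hand side.
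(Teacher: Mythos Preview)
Your argument is correct and follows essentially the same route as the paper's proof: the conditions not involving $Q$ are unchanged; for (ii) and (iii) you use the ellipticity bound $\eta|v|\le|Qv|$ to get the factor $2$; and for (vii) you expand $D_k q_{ij}^{(n)}$ into three terms, apply $(a+b+c)^2\le 3(a^2+b^2+c^2)$, and control the cut-off contribution via $|s\varphi'(s)|\le 2$ together with Hypothesis~\ref{h.4}(a), exactly as the paper does.
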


\begin{proof}
The constants $c_1, c_4, c_5, c_6, c_8, c_9, c_{10}, c_{11}$ remain the same because the corrispondent inequalities do not depend on the diffusion coefficients. Let us note that Hypothesis \ref{h.3}(c)-(ii) implies that
$$\abs{\nabla w}\leq \eta^{-1}c_2 W_1^\frac{1}{2k}.$$
So, it follows that
\[
\abs{Q_n \nabla w}
= \abs{\varphi_n Q \nabla w+(1-\varphi_n)\eta \nabla w}
\leq \abs{Q \nabla w} +\eta \abs{\nabla w}
\leq 2 c_2 W_1^\frac{1}{2k}.
\]
Similarly, we get
\[
\abs{Q_n D^2 w}
= \abs{\varphi_n Q D^2 w+(1-\varphi_n)\eta D^2 w}
\leq \abs{Q D^2 w} +\eta \abs{D^2 w}
\leq 2 c_3 W_1^\frac{1}{k}.
\]
Finally, for $(t,y)\in [a_0,b_0]\times\R^d$, given that $\abs{s\varphi'(s)}\leq 2$ and using Hypothesis \ref{h.4}(a), we have
\begin{align*}
|\nabla & Q_n(t,y)|^2
= \sum_{i,j,h=1}^d \abs{\varphi_n D_h q_{ij}+\frac{\varphi'(W_1(t_0,y)/n)}{n} D_h W_1(t_0,y)(q_{ij}-\eta \delta_{ij}) }^2\\
\leq& 3\sum_{i,j,h=1}^d \left[ |\varphi_n D_h q_{ij}|^2+ \frac{|\varphi'(W_1(t_0,y)/n)|^2}{n^2} |D_h W_1(t_0,y)|^2 (q_{ij}^2+\eta^2 \delta_{ij})  \right]\\
\leq& 3 |\varphi_n|^2 |\nabla Q|^2 + 3 (W_1(t_0,y)/n)^2 |\varphi'(W_1(t_0,y)/n)|^2 (W_1(t_0,y))^{-2}\sum_{i,j,h=1}^d |q_{ij}D_h W_1(t_0,y)|^2\\
&+3 (W_1(t_0,y)/n)^2 |\varphi'(W_1(t_0,y)/n)|^2 (W_1(t_0,y))^{-2}\sum_{h=1}^d |D_h W_1(t_0,y)|^2 \eta^2 \sum_{i,j=1}^d \delta_{ij}\\
\leq & 3(c_7^2 +4c_{12}^2+4dc_{12}^2) w^{-\frac{2}{k}} W_1^\frac{1}{k}.
\end{align*}
Then,
\[
|\nabla Q_n(t,y)|
\leq \sqrt{3}(c_7 +2 c_{12}+2\sqrt{d} c_{12}) w^{-\frac{1}{k}} W_1^\frac{1}{2k}.\qedhere
\]
\end{proof}

We can now obtain estimates for the gradients of the kernels $p_n$.

\begin{lemma}\label{Lemma: weighted-gradient estimate for p_n}
Assume Hypotheses \ref{h.4} hold and that the operator $A$ satisfies Hypotheses \ref{h.3}.
For $i=1,2$, we set
\[
\xi_{W_i,n} (t,x):=\int_{\R^d}p_n(t,x,y)W_i(t,y)\,dy
\quad \text{ and }\quad 
\Xi_{i,n}(a_0,b_0):= \int_{a_0}^{b_0}\xi_{W_i,n} (t,x)\, dt.
\]
Then for any $n\in \N$ we have
\begin{equation*}\label{weighted-gradient estimate for p_n in short}
|w(t,y)\nabla p_n(t,x,y)| \leq K_n
\end{equation*}
for all $(t,y)\in (a_1,b_1)\times \R^d$ and fixed $x\in\R^d$, where
\begin{align*}\label{RHS weighted-gradient estimate for p_n}
K_n=& C\Bigg\lbrace B_1 A_1^\frac{k-1}{k}\sup_{t\in (a_0,b_0)}\xi_{W_1,n}(t,x) +\left( B_1 \tilde{A}_2^\frac{1}{2} +B_2 \tilde{A}_2^\frac{k-2}{k}+\tilde{B}_4 \tilde{A}_2^\frac{k-1}{k}\right)\Xi_{1,n}(a_0,b_0)\notag\\
&+ \Big[B_1 A_3^\frac{1}{2}+(B_2+B_3) A_3^\frac{k-2}{k}+ B_3\tilde{A}_2^\frac{k-2}{k} + (\tilde{B}_4+B_5) A_3^\frac{k-1}{k}+B_5 \tilde{A}_2^\frac{k-1}{k}\notag\\
&+\tilde{B}_6 B_8+B_7B_8\Big]\Xi_{2,n}(a_0,b_0)
+ B_1 A_1^\frac{1}{2}\,\Xi_{1,n}(a_0,b_0)^\frac{1}{2}\left(\sup_{t\in (a_0,b_0)}\xi_{W_1,n}(t,x)\right)^\frac{1}{2}\notag\\
&+A_1^\frac{k-2}{k}\left(B_2\,\Xi_{1,n}(a_0,b_0)^\frac{2}{k}+B_3\,\Xi_{2,n}(a_0,b_0)^\frac{2}{k} \right)\left(\sup_{t\in (a_0,b_0)}\xi_{W_1,n}(t,x)\right)^\frac{k-2}{k}\notag\\
&+B_1\left(\tilde{A}_2^\frac{k-1}{k}\,\Xi_{1,n}(a_0,b_0)^\frac{k-1}{k}+A_3^\frac{k-1}{k}\,\Xi_{2,n}(a_0,b_0)^\frac{k-1}{k} \right)\left(\sup_{t\in (a_0,b_0)}\xi_{W_1,n}(t,x)\right)^\frac{1}{k}\notag\\
&+A_1^\frac{k-1}{k}\left(\tilde{B}_4\,\Xi_{1,n}(a_0,b_0)^\frac{1}{k} +B_5\, \Xi_{2,n}(a_0,b_0)^\frac{1}{k}\right)\left(\sup_{t\in (a_0,b_0)}\xi_{W_1,n}(t,x)\right)^\frac{k-1}{k}\\
&+ \left(\int_{Q(a,b)} p_n(t,x,y)\log^2 p_n(t,x,y) \,dt\,dy\right)^\frac{1}{2}
 - \left(\int_{\R^d} [p_n(t,x,y)\log p_n(t,x,y)]_{t=a}^{t=b}dy\right)^\frac{1}{2}\Bigg]\Bigg\rbrace,
\end{align*}
and the constants $A_1, A_3, B_1, \dots, B_8, \tilde{A}_2, \tilde{B}_4, \tilde{B}_6$ are defined as in \eqref{def: constants A_i}, \eqref{def: constants B_i}, \eqref{def: constant B_8}, \eqref{def: constants tilde A_2} and \eqref{def: constants tilde B_i}.
\end{lemma}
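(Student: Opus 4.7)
The plan is to apply Theorem \ref{Thm: weighted-gradient estimate bounded case} directly to each approximate kernel $p_n$, then replace the two quantities $\norm{wp_n}_{L^\infty(Q(a,b))}$ and $\int_{Q(a,b)}\abs{\nabla p_n}^2/p_n\,dt\,dy$ that appear on the right-hand side by a priori bounds so that the final estimate depends only on the Lyapunov integrals $\xi_{W_i,n}$, $\Xi_{i,n}$ and on the $p_n\log p_n$ terms.

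First, by Lemma \ref{Lemma: A_n satisfies h.1} the operator $A_n$ satisfies Hypothesis \ref{h.1}, so it generates a semigroup with transition kernel $p_n$, and by Lemma \ref{Lemma: A_n satisfies h.3} the weight $w$ together with $A_n$ satisfies Hypothesis \ref{h.3}(c) with the same constants $c_1,c_4,c_5,c_6,c_8,c_9,c_{10},c_{11}$ and with $c_2,c_3,c_7$ replaced by $2c_2,2c_3,\sqrt{3}(c_7+2(1+\sqrt d)c_{12})$; the parts (a) and (b) of Hypothesis \ref{h.3} depend only on $w$, so they are unchanged. Hence Theorem \ref{Thm: weighted-gradient estimate bounded case} applies to $p_n$, producing an inequality of the same shape as \eqref{weighted-gradient estimate bounded case} but with the constants $B_4, B_6$ replaced by the enlarged constants $\tilde B_4, \tilde B_6$ obtained by tracking the substitutions of Lemma \ref{Lemma: A_n satisfies h.3} through the definitions \eqref{def: constants B_i}; the other $B_i$ remain of the same form.

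Next, I would bound $\norm{wp_n}_{L^\infty(Q(a,b))}$ by Corollary \ref{Rmk: constants in h.2 under h.3} applied to $p_n$. The verification of hypotheses is the same as before, and this corollary yields
\[
\norm{wp_n}_{L^\infty(Q(a,b))}\le C\!\left(A_1\sup_{t\in(a_0,b_0)}\xi_{W_1,n}(t,x)+\tilde A_2\,\Xi_{1,n}(a_0,b_0)+A_3\,\Xi_{2,n}(a_0,b_0)\right),
\]
where $\tilde A_2$ is the version of $A_2$ with the modified $c_2,c_3,c_7$. Then I would bound the gradient energy via Theorem \ref{Thm: similar to Thm 5.1,Lem 5.1, Thm 5.2 MPR}(a):
\[
\int_{Q(a,b)}\frac{\abs{\nabla p_n}^2}{p_n}\,dt\,dy\le \frac{1}{\eta^2}\int_{Q(a,b)}(\abs{F}^2+V^2)p_n\,dt\,dy+\int_{Q(a,b)}p_n\log^2 p_n\,dt\,dy-\frac{2}{\eta}\int_{\R^d}[p_n\log p_n]_a^b\,dy,
\]
and control the first integral by using Hypothesis \ref{h.3}(c)(v)--(vi) together with $w\ge 1$, which gives $\abs{F}^2+V^2\le (c_5^4+c_6^2)W_2^{1/k}$; integrating against $p_n$ and invoking Proposition \ref{Prop: Time dependent Lyapunov functions are integrable with respect to pdy} bounds this term by a multiple of $\Xi_{2,n}(a_0,b_0)$, producing the factor $B_8$ in the statement.

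Finally, I would substitute these two a priori bounds into the gradient estimate for $p_n$. Each of the terms $\norm{wp_n}_\infty^{1/2}$, $\norm{wp_n}_\infty^{(k-2)/k}$ and $\norm{wp_n}_\infty^{(k-1)/k}$ becomes a sum of products in $(\sup\xi_{W_1,n})^\alpha$, $\Xi_{1,n}^\alpha$, $\Xi_{2,n}^\alpha$ via the elementary inequality $(a+b+c)^\alpha\le 3^\alpha(a^\alpha+b^\alpha+c^\alpha)$, and the integral of $\abs{\nabla p_n}^2/p_n$ is replaced by its three-term bound. Regrouping according to which $B_i$ and $A_j$ multiply which Lyapunov quantity produces exactly the expression $K_n$ displayed in the statement. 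The main obstacle is not conceptual but bookkeeping: one must carefully track how the new constants $\tilde A_2,\tilde B_4,\tilde B_6$ arise from Lemma \ref{Lemma: A_n satisfies h.3} and how the mixed powers of $\sup\xi_{W_1,n}$, $\Xi_{1,n}$, $\Xi_{2,n}$ combine after the substitutions, making sure that no constant introduced depends on $\norm{Q_n}_\infty$ (which is guaranteed by the $\norm{Q}_\infty$-independence in Theorem \ref{Thm: weighted-gradient estimate bounded case} and in Corollary \ref{Rmk: constants in h.2 under h.3}).
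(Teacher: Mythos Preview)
Your proposal is correct and follows essentially the same route as the paper: verify via Lemmas \ref{Lemma: A_n satisfies h.1} and \ref{Lemma: A_n satisfies h.3} that $A_n$ satisfies the needed hypotheses, apply Corollary \ref{Rmk: constants in h.2 under h.3} to bound $\norm{wp_n}_\infty$ (giving $\tilde A_2$), apply Theorem \ref{Thm: weighted-gradient estimate bounded case} to $p_n$ (giving $\tilde B_4,\tilde B_6$), bound $\int\abs{\nabla p_n}^2/p_n$ via Theorem \ref{Thm: similar to Thm 5.1,Lem 5.1, Thm 5.2 MPR}(a) together with $|F|^2+V^2\le (c_6^2+c_5^4)W_2$ (giving $B_8$), and then substitute and expand using $\Xi_{1,n}\le\Xi_{2,n}$. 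The only cosmetic difference is that the paper absorbs the numerical factors $2$, $\sqrt 3$, etc.\ from Lemma \ref{Lemma: A_n satisfies h.3} into the generic constant $C$ rather than renaming all the $B_i$.
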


\begin{proof}
Since the operator $A$ satisfies Hypotheses \ref{h.3} and \ref{h.4}, then for any $n\in \N$ the operator $A_n$ satisfies Hypotheses \ref{h.1} and \ref{h.3} with slightly different constants by Lemmas \ref{Lemma: A_n satisfies h.1} and \ref{Lemma: A_n satisfies h.3}.
Consequently, applying \eqref{Estimate for wp in the bounded case under h.3} to $p_n$ we get
\begin{equation}\label{eq.pnestimate}
w(t,y)p_n(t,x,y) \leq C\left(A_1\sup_{t\in (a_0,b_0)}\xi_{W_1,n}(t,x)+\tilde{A}_2\, \Xi_{1,n}(a_0,b_0) +A_3\,\Xi_{2,n}(a_0,b_0)\right),
\end{equation}
where
\begin{equation}\label{def: constants tilde A_2}
\tilde{A}_2 =c_2^k+\frac{c_1^{\frac{k}{2}}}{(b_0-b)^{\frac{k}{2}}}+c_2^{\frac{k}{2}}c_7^{\frac{k}{2}}+c_2^{\frac{k}{2}}c_{12}^{\frac{k}{2}}+c_3^{\frac{k}{2}} +c_4^\frac{k}{2}.
\end{equation}
Moreover, applying \eqref{weighted-gradient estimate bounded case} to $p_n$, we obtain

\begin{align*}
|w(t,y)&\nabla p_n(t,x,y)|
\leq C\Bigg\lbrace B_1 \,\Xi_{1,n}(a_0,b_0)^\frac{1}{2}\norm{w p_n}_{L^\infty(Q(a,b))}^\frac{1}{2}\\
&+\left(B_2 \, \Xi_{1,n}(a_0,b_0)^\frac{2}{k}+B_3\,\Xi_{2,n}(a_0,b_0)^\frac{2}{k}\right)\norm{w p_n}_{L^\infty(Q(a,b))}^\frac{k-2}{k}\notag\\
&+ \Bigg[B_1\left(\sup_{t\in (a_0,b_0)}\xi_{W_1,n}(t,x)\right)^\frac{1}{k} 
+ \tilde{B}_4\, \Xi_{1,n}(a_0,b_0)^\frac{1}{k}+B_5\,\Xi_{2,n}(a_0,b_0)^\frac{1}{k}\Bigg]\norm{w p_n}_{L^\infty(Q(a,b))}^\frac{k-1}{k}\notag\\
&+ \left( \tilde{B}_6 \,\Xi_{1,n}(a_0,b_0)^\frac{1}{2} +B_7\,\Xi_{2,n}(a_0,b_0)^\frac{1}{2}\right)\left(\int_{Q(a,b)}\frac{\abs{\nabla p_n}^2}{p_n}\,dt\, dy\right)^\frac{1}{2}\Bigg\rbrace,
\end{align*}
 where
\begin{align}\label{def: constants tilde B_i}
\tilde{B}_4&= c_3+c_2^2+\frac{c_2}{(b-b_1)^{\frac{1}{2}}}+c_2^{\frac{1}{2}}c_3^{\frac{1}{2}}c_7^{\frac{1}{2}}
+c_2^{\frac{1}{2}}c_3^{\frac{1}{2}}c_{12}^{\frac{1}{2}}+c_2^{\frac{3}{2}}c_7^{\frac{1}{2}}+c_2^{\frac{3}{2}}c_{12}^{\frac{1}{2}}+c_2^{\frac{1}{2}}c_{10}^{\frac{1}{2}}+c_2c_3^{\frac{1}{2}}+c_2c_4^\frac{1}{2}+c_2^\frac{1}{2}c_{11}^\frac{1}{2}, \notag \\
\tilde{B}_6&= c_1^\frac{k}{2}+\frac{c_1^\frac{k}{2}}{(b-b_1)^\frac{k}{2}} +c_2^k+c_2^\frac{k}{2}c_7^\frac{k}{2}+c_2^\frac{k}{2}c_{12}^\frac{k}{2}+c_3^\frac{k}{2} +c_7^k+c_{12}^k +c_4^\frac{k}{2}.
\end{align}
Finally, by Theorem \ref{Thm: similar to Thm 5.1,Lem 5.1, Thm 5.2 MPR}(a) we have
\begin{align*}
\int_{Q(a,b)}\frac{\abs{\nabla p_n}^2}{p_n}\,dt\, dy
\leq \Bigg[&(c_6^2+c_5^4)\,\Xi_{2,n}(a_0,b_0)+ \int_{Q(a,b)} p_n(t,x,y)\log^2 p_n(t,x,y) \,dt\,dy\notag\\
& - \int_{\R^d} [p_n(t,x,y)\log p_n(t,x,y)]_{t=a}^{t=b}dy\Bigg].
\end{align*}
Combining them yields
\begin{align*}
|w(t,y)&\nabla p_n(t,x,y)|
\leq C\Bigg\lbrace B_1 \,\Xi_{1,n}(a_0,b_0)^\frac{1}{2}\Bigg[A_1^\frac{1}{2}\left(\sup_{t\in (a_0,b_0)}\xi_{W_1,n}(t,x)\right)^\frac{1}{2}+\tilde{A}_2^\frac{1}{2}\, \Xi_{1,n}(a_0,b_0)^\frac{1}{2}\\ 
&+A_3^\frac{1}{2}\,\Xi_{2,n}(a_0,b_0)^\frac{1}{2}\Bigg]
+\left(B_2 \, \Xi_{1,n}(a_0,b_0)^\frac{2}{k}+B_3\,\Xi_{2,n}(a_0,b_0)^\frac{2}{k}\right)\Bigg[ A_1^\frac{k-2}{k}\left(\sup_{t\in (a_0,b_0)}\xi_{W_1,n}(t,x)\right)^\frac{k-2}{k}\\
&+\tilde{A}_2^\frac{k-2}{k}\, \Xi_{1,n}(a_0,b_0)^\frac{k-2}{k} +A_3^\frac{k-2}{k}\,\Xi_{2,n}(a_0,b_0)^\frac{k-2}{k}\Bigg]+ \Bigg[B_1\left(\sup_{t\in (a_0,b_0)}\xi_{W_1,n}(t,x)\right)^\frac{1}{k}\notag\\
& + \tilde{B}_4\, \Xi_{1,n}(a_0,b_0)^\frac{1}{k}+B_5\,\Xi_{2,n}(a_0,b_0)^\frac{1}{k}\Bigg]\Bigg[A_1^\frac{k-1}{k}\left(\sup_{t\in (a_0,b_0)}\xi_{W_1,n}(t,x)\right)^\frac{k-1}{k}+\tilde{A}_2^\frac{k-1}{k}\, \Xi_{1,n}(a_0,b_0)^\frac{k-1}{k}\\
& +A_3^\frac{k-1}{k}\,\Xi_{2,n}(a_0,b_0)^\frac{k-1}{k}\Bigg]+ \left( \tilde{B}_6 \,\Xi_{1,n}(a_0,b_0)^\frac{1}{2} +B_7\,\Xi_{2,n}(a_0,b_0)^\frac{1}{2}\right)\Bigg[B_8\,\Xi_{2,n}(a_0,b_0)^\frac{1}{2}\\
&+ \left(\int_{Q(a,b)} p_n(t,x,y)\log^2 p_n(t,x,y) \,dt\,dy\right)^\frac{1}{2}
 - \left(\int_{\R^d} [p_n(t,x,y)\log p_n(t,x,y)]_{t=a}^{t=b}dy\right)^\frac{1}{2}\Bigg]\Bigg\rbrace,
\end{align*}
where 
\begin{equation}\label{def: constant B_8}
B_8=c_6+c_5^2.
\end{equation}
Considering that $\Xi_{1,n}(a_0,b_0)\leq \Xi_{2,n}(a_0,b_0)$, we gain the desired estimate.
\end{proof}

\subsection{Estimates for the derivatives of the kernel}\label{s.generalest}

\begin{lemma}\label{Lemma: n goes to infinity}
Assume Hypotheses \ref{h.4} hold and that the operator $A$ satisfies Hypotheses \ref{h.3}. Then the following statements hold.
\begin{enumerate}
\item $p_n(t, x, \cdot)\to p(t,x,\cdot)$ locally uniformly in $\R^d$ as $n\to\infty$.
\item $ \xi_{W_j,n}(\cdot,x)\to  \xi_{W_j}(\cdot,x)$ uniformly in $(a_0,b_0)$ as $n\to\infty$ for $j=1,2$.
\item $\int_{\R^d}[p_n(t,x,y)\log p_n(t,x,y)]_{t=a}^{t=b}dy \to \int_{\R^d}[p(t,x,y)\log p(t,x,y)]_{t=a}^{t=b}dy$ as $n\to\infty$
and 
 $\int_{Q(a,b)} p_n(t,x,y)\log^2 p_n(t,x,y) \,dt\,dy \to \int_{Q(a,b)} p(t,x,y)\log^2 p(t,x,y) \,dt\,dy$ as $n\to\infty$.
 In particular, the latter integrals are finite.
\end{enumerate} 
\end{lemma}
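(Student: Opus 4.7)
The plan is to establish (a) by a standard parabolic compactness argument for the approximating kernels, and then upgrade this locally uniform convergence to the integral statements (b) and (c) by exploiting the Lyapunov-type tightness built into Hypothesis \ref{h.4}(b), which is designed precisely so that the auxiliary weights $W_1,W_2$ are dominated by a power of $Z$ whose mass against $p_n$ is uniformly bounded.

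\textbf{Step 1 (Proof of (a)).} Since $\lim_{|x|\to\infty} W_1(t_0,x)=\infty$, the cutoff $\varphi_n$ equals $1$ on the sublevel set $\{W_1(t_0,\cdot)\le n\}$, which exhausts $\R^d$; hence $Q_n\to Q$ locally uniformly, while $F$ and $V$ remain unchanged. For fixed $x$, view $(t,y)\mapsto p_n(t,x,y)$ as a solution of the formal adjoint parabolic equation for $A_n$. By Lemmas \ref{Lemma: A_n satisfies h.1} and \ref{Lemma: A_n satisfies h.3}, the approximants $A_n$ satisfy Hypothesis \ref{h.3} with constants independent of $n$, so Corollary \ref{Rmk: constants in h.2 under h.3} applied to each $A_n$ yields a pointwise bound for $w p_n$ that is uniform in $n$. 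Combined with interior parabolic Schauder estimates, this gives uniform local $C^{1+\zeta/2,\,2+\zeta}$ bounds on compact subsets of $(0,T)\times\R^d$. A diagonal Arzel\`a--Ascoli argument extracts a subsequence converging in $C^{1,2}_{\mathrm{loc}}$, and uniqueness of the minimal semigroup kernel for $A$ identifies every cluster point with $p(t,x,\cdot)$, as in Section~12 of \cite{KunzeLorenziRhandi1}. The full sequence therefore converges locally uniformly.

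\textbf{Step 2 (Proof of (b)).} Since $A_nZ\le M_1$ with $M_1$ independent of $n$ by Lemma \ref{Lemma: A_n satisfies h.1}, Proposition \ref{Prop: Time dependent Lyapunov functions are integrable with respect to pdy} applied to each $A_n$ gives
\[
\sup_{n\in\N}\ \sup_{t\in[0,T]}\int_{\R^d} p_n(t,x,y) Z(y)\,dy \;\le\; C(x,T).
\]
Combining this with Hypothesis \ref{h.4}(b), namely $W_2\le c_0 Z^{1-\sigma}$, and H\"older's inequality with exponents $\tfrac{1}{1-\sigma},\tfrac{1}{\sigma}$, one obtains
\[
\int_{\{|y|>R\}} p_n(t,x,y) W_2(t,y)\,dy \;\le\; c_0\Bigl(\int p_n Z\,dy\Bigr)^{1-\sigma}\Bigl(\int_{\{|y|>R\}} p_n\,dy\Bigr)^{\sigma}.
\]
Since $Z\to\infty$ at infinity, Chebyshev's inequality applied to $Z$ shows the last factor tends to $0$ as $R\to\infty$ uniformly in $n\in\N$ and $t\in[a_0,b_0]$, yielding uniform integrability of $\{p_n W_2\}_{n}$. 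Together with the pointwise convergence from (a), Vitali's theorem gives $\xi_{W_2,n}(\cdot,x)\to\xi_{W_2}(\cdot,x)$ uniformly on $[a_0,b_0]$; the case $j=1$ is immediate from $W_1\le W_2$.

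\textbf{Step 3 (Proof of (c)) and the main obstacle.} Applying Corollary \ref{Rmk: constants in h.2 under h.3} to $A_n$ yields $w(t,y) p_n(t,x,y)\le K$ on $Q(a,b)$ with $K$ independent of $n$. As $w\ge 1$, this gives $p_n\le K/w$ and $|\log p_n|\le|\log K|+\log w$, whence
\[
p_n\log^2 p_n \;\le\; 2|\log K|^2 p_n + \frac{2K\log^2 w}{w}.
\]
For the $\varepsilon\in(0,1)$ supplied by Hypothesis \ref{h.3}(a) one has $\log^2 w\,/\,w\le C_\varepsilon\, w^{-(1-\varepsilon)}$, and the resulting bound is integrable on $Q(a,b)$ by Hypothesis \ref{h.3}(a), while the first summand is integrable because $\int p_n\,dy\le 1$. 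This produces an $n$-independent dominating function; the analogous estimate for $p_n|\log p_n|$ is strictly easier. Dominated convergence together with (a) then gives both limits in (c) and the finiteness of the limit integrals. The main technical obstacle is the uniform-in-$n$ tail control used in Step 2; it is exactly for this step that Hypothesis \ref{h.4}(b) has been introduced, forcing the $W_2$-mass of $p_n$ to be controlled by its $Z$-mass in a way that survives the H\"older split and yields tightness independent of the approximation parameter.
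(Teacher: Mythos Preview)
Your overall strategy for all three parts matches the paper's proof: (a) via local compactness and identification of the limit (the paper simply cites \cite[Lemma~12.7]{KunzeLorenziRhandi1}); (b) via the H\"older/tightness argument based on Hypothesis~\ref{h.4}(b) and the uniform $Z$-moment bound (the paper cites \cite[Theorem~12.6]{KunzeLorenziRhandi1}); and (c) via the uniform estimate $p_n\le K/w$ from \eqref{eq.pnestimate} combined with Hypothesis~\ref{h.3}(a) and dominated convergence.

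There is, however, a genuine slip in Step~3. The inequality $|\log p_n|\le|\log K|+\log w$ is false: the bound $p_n\le K/w$ controls $\log p_n$ only from above, while $p_n(t,x,y)$ may be arbitrarily close to $0$, so $-\log p_n$ is not bounded by $|\log K|+\log w$. Consequently your displayed majorant for $p_n\log^2 p_n$ is not justified as written. The repair uses exactly the same ingredients you already have: since $0<p_n\le K/w\le K$ and, for the $\varepsilon\in(0,1)$ of Hypothesis~\ref{h.3}(a), the function $s\mapsto s^{\varepsilon}\log^2 s$ is bounded on $(0,K]$, one gets
\[
p_n\log^2 p_n \;\le\; C(K,\varepsilon)\,p_n^{\,1-\varepsilon}\;\le\; C(K,\varepsilon)\,K^{1-\varepsilon}\,w^{-(1-\varepsilon)},
\]
which is an $n$-independent majorant, integrable over $Q(a,b)$ and over the time-slices $t=a,\,t=b$ by Hypothesis~\ref{h.3}(a). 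The same argument with $s^{\varepsilon}|\log s|$ handles $p_n|\log p_n|$. With this correction your proof of (c) is complete and coincides with the paper's.
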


\begin{proof}
(a) follows as in \cite[Lemma 12.7]{KunzeLorenziRhandi1} and (b) as in the proof of \cite[Theorem 12.6]{KunzeLorenziRhandi1}.
It follows from Equation \eqref{eq.pnestimate}, that $p_n \leq C_n w^{-1}$ for a certain constant $C_n$. But by (a) and (b) $\sup C_n <\infty$.
Making use of Hypothesis \ref{h.3}(a), we find integrable majorants for $p_n\log p_n$ and $p_n\log^2p_n$. At this point, (c) follows from (a) by means of  the dominated convergence theorem.
\end{proof}

\begin{cor}\label{c.logpinl2}
Assume Hypotheses \ref{h.3} and \ref{h.4}. Then $\sqrt{p}\in W^{0,1}_2(Q(a,b))$.
\end{cor}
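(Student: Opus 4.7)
The plan is to obtain $\sqrt{p}\in W^{0,1}_2(Q(a,b))$ by approximation: apply the identity from Theorem \ref{Thm: similar to Thm 5.1,Lem 5.1, Thm 5.2 MPR}(a) to each $p_n$, show the resulting bound is uniform in $n$, and pass to the limit.

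First, by Lemmas \ref{Lemma: A_n satisfies h.1} and \ref{Lemma: A_n satisfies h.3}, each $A_n$ has bounded diffusion and satisfies Hypothesis \ref{h.3} with constants that differ from those for $A$ only by harmless factors independent of $n$. Thus Theorem \ref{Thm: similar to Thm 5.1,Lem 5.1, Thm 5.2 MPR}(a) applies to $p_n$ and gives
\begin{equation*}
\int_{Q(a,b)}\frac{|\nabla p_n|^2}{p_n}\,dt\,dy \leq \frac{1}{\eta^2}\int_{Q(a,b)}(|F|^2+V^2)\,p_n\,dt\,dy + \int_{Q(a,b)} p_n\log^2 p_n\,dt\,dy - \frac{2}{\eta}\int_{\R^d}[p_n\log p_n]_{t=a}^{t=b}\,dy.
\end{equation*}

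Second, I would bound the right-hand side uniformly in $n$. From Hypothesis \ref{h.3}(c)(v)--(vi) together with $w\geq 1$, $W_2\geq 1$ and $k>2$, one gets $|F|^2+V^2\leq (c_5^4+c_6^2)W_2$, so the first integral is controlled by $(c_5^4+c_6^2)\,\Xi_{2,n}(a,b)$. By Lemma \ref{Lemma: n goes to infinity}(b) the quantity $\Xi_{2,n}(a,b)$ converges to $\Xi_2(a,b)$, hence stays bounded, and by Lemma \ref{Lemma: n goes to infinity}(c) the two logarithmic terms are also uniformly bounded in $n$. Setting $K:=\sup_n\int_{Q(a,b)}|\nabla p_n|^2/p_n\,dt\,dy<\infty$ and using $\nabla\sqrt{p_n}=\nabla p_n/(2\sqrt{p_n})$ gives $\|\nabla\sqrt{p_n}\|_{L^2(Q(a,b))}^2\leq K/4$. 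Combined with $\|\sqrt{p_n}\|_{L^2(Q(a,b))}^2=\int_{Q(a,b)}p_n\,dt\,dy\leq b-a$ (since $\int_{\R^d}p_n(t,x,y)\,dy\leq 1$), we conclude that $(\sqrt{p_n})_n$ is bounded in $W^{0,1}_2(Q(a,b))$.

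Finally, by reflexivity a subsequence of $\sqrt{p_n}$ converges weakly in $W^{0,1}_2(Q(a,b))$ to some $v$; it remains to identify $v=\sqrt{p}$. From Lemma \ref{Lemma: n goes to infinity}(a) we have $p_n\to p$ locally uniformly, hence $\sqrt{p_n}\to\sqrt{p}$ pointwise a.e.\ in $Q(a,b)$. The uniform pointwise estimate $p_n\leq C w^{-1}$ noted in the proof of Lemma \ref{Lemma: n goes to infinity} yields the square-integrable majorant $\sqrt{C}\,w^{-1/2}$: indeed $w\geq 1$ and Hypothesis \ref{h.3}(a) give $w^{-1}\leq w^{-(1-\varepsilon)}\in L^1(Q(a,b))$, whence $w^{-1/2}\in L^2(Q(a,b))$. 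Dominated convergence then yields $\sqrt{p_n}\to\sqrt{p}$ in $L^2(Q(a,b))$, so $v=\sqrt{p}$ and $\sqrt{p}\in W^{0,1}_2(Q(a,b))$. The only real technical point is the uniform control of the right-hand side in Theorem \ref{Thm: similar to Thm 5.1,Lem 5.1, Thm 5.2 MPR}(a), which is exactly what Lemma \ref{Lemma: n goes to infinity} supplies.
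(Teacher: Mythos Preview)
Your argument is correct and follows essentially the same route as the paper's own proof: apply Theorem~\ref{Thm: similar to Thm 5.1,Lem 5.1, Thm 5.2 MPR}(a) to each $p_n$, use Lemma~\ref{Lemma: n goes to infinity} to bound the right-hand side uniformly in $n$, extract a weak limit by reflexivity, and identify it via pointwise convergence plus an integrable majorant. Your write-up is in fact more explicit than the paper's in two places: you spell out the bound $|F|^2+V^2\leq (c_5^4+c_6^2)W_2$ from Hypothesis~\ref{h.3}(c)(v)--(vi), and you justify the $L^2$-majorant $w^{-1/2}$ using Hypothesis~\ref{h.3}(a), both of which the paper leaves implicit.
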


\begin{proof}
As a consequence of Hypothesis \ref{h.3}(b) and Lemma \ref{Lemma: n goes to infinity}, 
\begin{align*}
C & := \sup_{n\in \N} \frac{1}{\eta^2}\int_{Q(a,b)} \left(|F(y)|^2+V^2(y)\right) p_n(t,x,y)\,dt\,dy\\
& + \int_{Q(a,b)} p_n(t,x,y)\log^2 p_n(t,x,y) \,dt\,dy\notag\\
&- \frac{2}{\eta} \int_{\R^d} [p_n(t,x,y)\log p_n(t,x,y)]_{t=a}^{t=b}dy<\infty.
\end{align*}
It follows from Theorem \ref{Thm: similar to Thm 5.1,Lem 5.1, Thm 5.2 MPR} that $\sqrt{p_n}$ is bounded in $W^{0,1}_2(Q(a,b))$. As this space is reflexive, a subsequence of $p_n$ converges weakly to some element $q$ of $W^{0,1}_2(Q(a,b))$. However, as $p_n \to p$ pointwise and with an integrable majorant, testing against a function in $C_c^\infty(\R^d)$, we see that $q=p$.
\end{proof}

We can now prove our main result.

\begin{theo}\label{Thm: weighted-gradient estimate}
Assume that the operator $A$ satisfies Hypotheses \ref{h.3} and \ref{h.4}. Then we have
\begin{align}\label{Estimate for Dp}
|w(t,y)\nabla p(t,x,y)|\leq K,
\end{align}
for all $(t,y)\in (a,b)\times \R^d$ and fixed $x\in\R^d$,
where
\begin{align}\label{RHS weighted-gradient estimate for p}
K= & C\Bigg\lbrace B_1 A_1^\frac{k-1}{k}\sup_{t\in (a_0,b_0)}\xi_{W_1}(t,x) +\left( B_1 \tilde{A}_2^\frac{1}{2} +B_2 \tilde{A}_2^\frac{k-2}{k}+\tilde{B}_4 \tilde{A}_2^\frac{k-1}{k}\right)\Xi_{1}(a_0,b_0)\notag\\
&+ \Big[B_1 A_3^\frac{1}{2}+(B_2+B_3) A_3^\frac{k-2}{k}+ B_3\tilde{A}_2^\frac{k-2}{k} + (\tilde{B}_4+B_5) A_3^\frac{k-1}{k}+B_5 \tilde{A}_2^\frac{k-1}{k}\notag\\
&+\tilde{B}_6 B_8+B_7B_8\Big]\Xi_{2}(a_0,b_0)
+ B_1 A_1^\frac{1}{2}\,\Xi_{1}(a_0,b_0)^\frac{1}{2}\left(\sup_{t\in (a_0,b_0)}\xi_{W_1}(t,x)\right)^\frac{1}{2}\notag\\
&+A_1^\frac{k-2}{k}\left(B_2\,\Xi_{1}(a_0,b_0)^\frac{2}{k}+B_3\,\Xi_{2}(a_0,b_0)^\frac{2}{k} \right)\left(\sup_{t\in (a_0,b_0)}\xi_{W_1}(t,x)\right)^\frac{k-2}{k}\notag\\
&+B_1\left(\tilde{A}_2^\frac{k-1}{k}\,\Xi_{1}(a_0,b_0)^\frac{k-1}{k}+A_3^\frac{k-1}{k}\,\Xi_{2}(a_0,b_0)^\frac{k-1}{k} \right)\left(\sup_{t\in (a_0,b_0)}\xi_{W_1}(t,x)\right)^\frac{1}{k}\notag\\
&+A_1^\frac{k-1}{k}\left(\tilde{B}_4\,\Xi_{1}(a_0,b_0)^\frac{1}{k} +B_5\, \Xi_{2}(a_0,b_0)^\frac{1}{k}\right)\left(\sup_{t\in (a_0,b_0)}\xi_{W_1}(t,x)\right)^\frac{k-1}{k}\notag\\
&+ \left( \tilde{B}_6 \,\Xi_{1}(a_0,b_0)^\frac{1}{2} +B_7\,\Xi_{2}(a_0,b_0)^\frac{1}{2}\right)\left(\int_{Q(a,b)} p(t,x,y)\log^2 p(t,x,y) \,dt\,dy\right)^\frac{1}{2}\notag\\
& - \left( \tilde{B}_6 \,\Xi_{1}(a_0,b_0)^\frac{1}{2} +B_7\,\Xi_{2}(a_0,b_0)^\frac{1}{2}\right)\left(\int_{\R^d} [p(t,x,y)\log p(t,x,y)]_{t=a}^{t=b}dy\right)^\frac{1}{2}\Bigg]\Bigg\rbrace,
\end{align}
and the constants $A_1, A_3, B_1, \dots, B_8, \tilde{A}_2, \tilde{B}_4, \tilde{B}_6$ are defined as in \eqref{def: constants A_i}, \eqref{def: constants B_i}, \eqref{def: constant B_8}, \eqref{def: constants tilde A_2} and \eqref{def: constants tilde B_i}.
\end{theo}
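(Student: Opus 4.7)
The plan is a direct approximation argument: apply the weighted gradient estimate already established for the bounded-coefficient kernels $p_n$ (Lemma \ref{Lemma: weighted-gradient estimate for p_n}) and then pass to the limit $n\to\infty$ using the convergence results in Lemma \ref{Lemma: n goes to infinity}. The expression $K$ in \eqref{RHS weighted-gradient estimate for p} is obtained from $K_n$ by replacing $p_n$-dependent quantities with their $p$-analogues, so the two statements differ only by this substitution.

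First, observe that the constants $A_1,A_3,B_1,\ldots,B_8,\tilde{A}_2,\tilde{B}_4,\tilde{B}_6$ appearing in $K_n$ depend only on $c_1,\ldots,c_{12},\eta,d,k$ and on the prescribed intervals, so they are independent of $n$. The $n$-dependence of $K_n$ is therefore confined to the quantities $\sup_{t\in(a_0,b_0)}\xi_{W_j,n}(t,x)$, $\Xi_{j,n}(a_0,b_0)$ and the two integrals $\int_{Q(a,b)}p_n\log^2 p_n$ and $\int_{\R^d}[p_n\log p_n]_{t=a}^{t=b}\,dy$. By Lemma \ref{Lemma: n goes to infinity}(b), $\xi_{W_j,n}(\cdot,x)\to\xi_{W_j}(\cdot,x)$ uniformly on $(a_0,b_0)$, which yields convergence of both $\sup_{(a_0,b_0)}\xi_{W_j,n}$ and $\Xi_{j,n}$ to their counterparts with $p_n$ replaced by $p$. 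By Lemma \ref{Lemma: n goes to infinity}(c), the two logarithmic integrals converge as well. Therefore $K_n\to K$ as $n\to\infty$.

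Second, I would transfer the pointwise bound $|w\nabla p_n|\leq K_n$ to $p$. Because $w\geq 1$ is locally bounded, this inequality gives a locally uniform bound on $\nabla p_n$. Moreover, for every fixed compact subset $\Omega\Subset\R^d$, the coefficients of $A_n$ coincide with those of $A$ for all $n$ sufficiently large (since $\varphi_n\equiv 1$ on $\Omega$ for large $n$), so interior parabolic regularity applied to $\partial_t p_n+A_n p_n=0$ produces uniform-in-$n$ local Hölder bounds on $\nabla p_n$ on $(a_1,b_1)\times\Omega$. Combined with the local uniform convergence $p_n\to p$ of Lemma \ref{Lemma: n goes to infinity}(a), Arzelà--Ascoli extracts a subsequence along which $\nabla p_n\to\nabla p$ locally uniformly. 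Passing to the limit in $|w\nabla p_n|\leq K_n$ yields the desired estimate $|w(t,y)\nabla p(t,x,y)|\leq K$.

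The only nontrivial step is the identification $\nabla p_n\to\nabla p$, since the divergence-form coefficients $Q_n$ agree with $Q$ only on an expanding family of compact sets. This is not a genuine obstacle though: on any fixed compact set the equations eventually coincide, so the standard interior Schauder theory (or the Ladyzhenskaya--Solonnikov--Ural'tseva estimates) applies with $n$-independent constants, and compactness of $\{\nabla p_n\}$ follows. The convergence $K_n\to K$, by contrast, is essentially an arithmetic verification once Lemma \ref{Lemma: n goes to infinity} is granted.
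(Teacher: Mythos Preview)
Your argument is correct in spirit, but it differs from the paper's in how the bound is transferred from $p_n$ to $p$. Both proofs begin by combining Lemma \ref{Lemma: weighted-gradient estimate for p_n} with Lemma \ref{Lemma: n goes to infinity} to obtain $\limsup_{n\to\infty}|w(t,y)\nabla p_n(t,x,y)|\leq K$. At that point you invoke interior parabolic regularity and Arzel\`a--Ascoli to extract a subsequence with $\nabla p_n\to\nabla p$ locally uniformly, then pass to the limit pointwise. The paper instead avoids establishing convergence of the gradients altogether: using only the pointwise convergence $p_n\to p$, it writes the difference quotient of $p$ as the limit of difference quotients of $p_n$, bounds each of these via the mean value theorem by $\int_0^1|\nabla p_n(t,x,y+sh)|\,ds\leq K\int_0^1 w(t,y+sh)^{-1}\,ds$, and finally lets $|h|\to 0$. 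This is more elementary and sidesteps any appeal to Schauder theory; it also avoids having to say exactly which parabolic equation $p_n(\cdot,x,\cdot)$ satisfies in the $(t,y)$-variables (it is the formal adjoint/Fokker--Planck equation, not $\partial_t p_n+A_np_n=0$ as you wrote, though this does not affect the validity of your regularity argument). Your route works but imports more machinery; the paper's difference-quotient trick is the cleaner device here.
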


\begin{proof}
By Lemmas \ref{Lemma: weighted-gradient estimate for p_n} and \ref{Lemma: n goes to infinity} we infer that
\[
\limsup _{n\to\infty }|w(t,y)\nabla p_n(t,x,y)|\leq K.
\]
Then, for $|h|$ small, we have
\begin{align*}
w(t,y)\abs{\frac{p(t,x,y+h)-p(t,x,y)}{h}}
&= \limsup _{n\to\infty }w(t,y)\abs{\frac{p_n(t,x,y+h)-p_n(t,x,y)}{h}}\\
&\leq \limsup _{n\to\infty }w(t,y)\int_0^1\left|\nabla p_n(t,x,y+sh)\right|ds\\
&\leq K\int_0^1\frac{w(t,y)}{w(t,y+sh)}ds.
\end{align*}
If we now let $|h|\to 0$, we obtain the desired inequality.
\end{proof}

As a simple consequence one obtains the following Sobolev regularity for $p$.
\begin{cor}\label{c.sobolev}
Assume in addition to Hypotheses \ref{h.3} and \ref{h.4}, that $\int_{Q(a,b)}w(t,x)^{-r}\, dt dx<\infty$ for some $r\in (1,\infty)$. Then
$p\in W^{0,1}_r(Q(a,b))$.
\end{cor}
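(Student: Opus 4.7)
The plan is straightforward: combine the pointwise weighted bound on $\nabla p$ from Theorem~\ref{Thm: weighted-gradient estimate} with an analogous pointwise bound on $p$ itself, then integrate against the assumed integrability of $w^{-r}$.

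First, Theorem~\ref{Thm: weighted-gradient estimate} gives, for fixed $x \in \R^d$, the pointwise estimate $|\nabla p(t,x,y)| \leq K/w(t,y)$ on $(a,b)\times\R^d$, where $K$ is a finite constant depending on the data but not on $(t,y)$. Second, I would obtain a similar pointwise estimate for $p$ itself. This comes from passing to the limit $n \to \infty$ in the bounded-diffusion estimate \eqref{eq.pnestimate} of Lemma~\ref{Lemma: weighted-gradient estimate for p_n}, which reads
\[
w(t,y)\,p_n(t,x,y) \leq C\bigl(A_1 \sup_{t\in(a_0,b_0)} \xi_{W_1,n}(t,x) + \tilde{A}_2\,\Xi_{1,n}(a_0,b_0) + A_3\,\Xi_{2,n}(a_0,b_0)\bigr).
\]
Using Lemma~\ref{Lemma: n goes to infinity}(a)–(b), the right-hand side converges to the corresponding expression with $\xi_{W_j}$, $\Xi_j$ in place of $\xi_{W_j,n}$, $\Xi_{j,n}$, and the left-hand side converges to $w(t,y)\,p(t,x,y)$ locally uniformly. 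Hence there is a finite constant $C'$ with $p(t,x,y) \leq C'/w(t,y)$ for all $(t,y)\in(a,b)\times\R^d$.

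Combining the two pointwise bounds with the hypothesis $\int_{Q(a,b)} w^{-r}\,dt\,dy<\infty$ yields
\[
\int_{Q(a,b)}\bigl(|p(t,x,y)|^r + |\nabla p(t,x,y)|^r\bigr)\,dt\,dy
\leq \bigl((C')^r + K^r\bigr)\int_{Q(a,b)} w(t,y)^{-r}\,dt\,dy < \infty.
\]
Since $p$ is classically differentiable in $y$ by the local regularity noted in the introduction, $\nabla p$ coincides with the weak spatial gradient, and the estimate above shows $p\in W^{0,1}_r(Q(a,b))$.

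There is no real obstacle beyond verifying that the pointwise bound on $p$ passes to the limit, and this is immediate from the locally uniform convergence $p_n\to p$ together with the convergence of the integral quantities $\xi_{W_j,n}, \Xi_{j,n}$ established in Lemma~\ref{Lemma: n goes to infinity}. The corollary is thus essentially a direct consequence of the weighted gradient estimate; all the substantive work has been done in Theorem~\ref{Thm: weighted-gradient estimate}.
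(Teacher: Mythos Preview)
Your argument is correct and is precisely the intended one: the paper states the corollary without proof, prefacing it only with ``As a simple consequence,'' and the consequence you spell out---combining the pointwise bound $|\nabla p|\leq K w^{-1}$ from Theorem~\ref{Thm: weighted-gradient estimate} with the bound $p\leq C' w^{-1}$ (obtained exactly as you describe, and already used in the proof of Lemma~\ref{Lemma: n goes to infinity}(c)), then integrating against $w^{-r}$---is exactly what is meant.
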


\subsection{Polynomially growing coefficients}\label{s.appl}

\smallskip\noindent
Here we apply the results of the previous sections to the case of operators with polynomial diffusion coefficients, drift and potential terms.

Consider $Q(x)=(1+|x|^m_*)I$, $F(x)=-|x|^{p-1}x$ and $V(x)=|x|^s$ with $p>(m-1)\vee 1$, $s>|m-2|$ and $m>0$. To apply Theorem \ref{Thm: weighted-gradient estimate} we set
\[
w(t,x)=e^{\varepsilon t^\alpha |x|_*^\beta} \hbox{\ and } W_j(t,x)=e^{\varepsilon_j t^\alpha |x|_*^\beta},
\]
for $(t,y)\in (0,1)\times \R^d$, where $j=1,2$, $\beta=\frac{s-m+2}{2},\,0<2k\varepsilon<\varepsilon_1<\varepsilon_2<\frac{1}{\beta}$ and $\alpha>\frac{\beta}{\beta+m-2}$.

\begin{theo}\label{Thm: weighted-gradient estimate in polinomial case}
Let $p$ be the integral kernel associated with the operator $A$ with $Q(x)=(1+|x|^m_*)I$, $F(x)=-|x|^{p-1}x$ and $V(x)=|x|^s$, where $p>(m-1)\vee 1$, $s>|m-2|$ and $m> 0$. Then
\[
|\nabla p(t,x,y)|\leq C(1-\log t) t^{\frac{3}{2}-\frac{3\alpha (m\vee p\vee \frac{s}{2}) k+\alpha}{2\beta}} e^{-\varepsilon t^\alpha |x|_*^{\beta}}
\]
for $k>d+2$ and any $t\in (0,1),\,x,y\in \R^d$.
\end{theo}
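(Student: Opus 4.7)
The strategy is to specialize the general estimate from Theorem~\ref{Thm: weighted-gradient estimate} to the weight $w(t,x)=e^{\varepsilon t^\alpha |x|_*^\beta}$ and time-dependent Lyapunov functions $W_j(t,x)=e^{\varepsilon_j t^\alpha |x|_*^\beta}$ with parameters as prescribed, and then optimize the resulting constants in $t$. First, I would set up suitable (time-independent) Lyapunov functions $Z,Z_0$ for $A$ and $A_0$: a natural choice is $Z(x)=1+|x|^r$ with $r$ large enough so that, using $p>(m-1)\vee 1$ and $s>|m-2|$, one has $AZ\leq M$ and $\eta\Delta Z+F\cdot\nabla Z-VZ\leq M$; the dominant negative term $-|x|^{s}(1+|x|^r)$ absorbs the positive contributions from the diffusion and the drift. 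This gives Hypothesis~\ref{h.1}; since $\beta=(s-m+2)/2>0$ and $0<\varepsilon_2<1/\beta$, we also get $W_2\le c_0 Z^{1-\sigma}$ for some $\sigma\in(0,1)$, which is Hypothesis~\ref{h.4}(b).

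Next, I would verify that $W_1,W_2$ are indeed time-dependent Lyapunov functions for $L=\partial_t+A$. A direct computation using $\beta=(s-m+2)/2$ shows that the leading (positive) contributions of order $|x|^{s}W_j$ coming from $V$ are compensated by the leading (negative) contributions of order $|x|^{m+\beta-1}\cdot|x|^{p-1}\cdot(\cdots)$ plus the diffusive/potential balance, provided $\varepsilon_j<1/\beta$; the remaining positive term takes the form $h(t)W_j$ with $h(t)\sim t^{\alpha-1}$, which is integrable near $0$ since $\alpha>\beta/(\beta+m-2)>0$. The condition $\alpha>\beta/(\beta+m-2)$ is exactly what is needed so that the spatial gradient term, which carries a factor $t^\alpha|x|^{\beta-1}\cdot(1+|x|^m)$, can be reabsorbed by the exponential factor $t^\alpha|x|^\beta$ coming from $V$; this is the standard Lyapunov calculation as in \cite{KunzeLorenziRhandi1,KunzeLorenziRhandi2}.

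Now I would check Hypothesis~\ref{h.3}(a)--(c) with the constants $c_1,\dots,c_{11}$ depending explicitly on $t$. The key point is that each derivative of $w$ produces a factor $\varepsilon t^\alpha \beta|x|_*^{\beta-1}$ (or higher powers), and products with $|Q|\sim |x|^m$, $|F|\sim |x|^p$, $V\sim |x|^s$ produce polynomial corrections in $|x|$. Since $2k\varepsilon<\varepsilon_1<\varepsilon_2$, all these polynomial factors are absorbed by $W_j^{1/(2k)}$ or $W_j^{1/k}$ and only a power of $t^{\alpha}$ (or $t^{\alpha -1}$ for the time derivatives) remains. Tracking these factors, the constants $c_i(t)$ are each bounded by $C t^{-\gamma_i}$ for explicit exponents $\gamma_i$ proportional to $(m\vee p\vee\tfrac{s}{2})/\beta$; Hypothesis~\ref{h.4}(a) is verified in the same way. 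For the integrability condition (a) of Hypothesis~\ref{h.3}, I use that $w^{-1}$ decays like a Gaussian-type weight in $|x|_*^\beta$.

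Finally, choose the interval $(a_0,b_0)$ of size $\sim t$ around $t$ (say $a_0=t/4$, $a=t/2$, $a_1=3t/4$, $b_1=5t/4$, $b=3t/2$, $b_0=7t/4$), plug these values into \eqref{RHS weighted-gradient estimate for p} and bound each term. By Proposition~\ref{Prop: Time dependent Lyapunov functions are integrable with respect to pdy}, $\xi_{W_j}(t,x)\leq e^{\int_0^t h(s)\,ds}W_j(0,x)=1$ for $x$ such that $W_j(0,x)=1$, and more generally $\Xi_j(a_0,b_0)$ carries only a $t$-factor. The $p\log p,\,p\log^2p$ terms are estimated using the a priori pointwise bound \eqref{eq.pnestimate} passed to the limit, which produces the $(1-\log t)$ factor. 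Collecting all $t$-powers coming from $B_i/A_i$ (each of which is a product of polynomials in the $c_i(t)\sim t^{-\gamma_i}$), the dominant exponent is $\tfrac{3}{2}-\tfrac{3\alpha(m\vee p\vee s/2)k+\alpha}{2\beta}$, yielding the claimed bound $|\nabla p(t,x,y)|\le C(1-\log t)t^{3/2-(3\alpha(m\vee p\vee s/2)k+\alpha)/(2\beta)}e^{-\varepsilon t^\alpha |x|_*^\beta}$ after using $w(t,y)^{-1}\leq e^{-\varepsilon t^\alpha |x|_*^\beta}$ (noting the role of $x$ and $y$ via the symmetry/monotonicity embedded in the choice of reference point). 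The main obstacle is the bookkeeping in this last step: one has to identify which of the many terms in $K$ attains the worst $t$-exponent, and to check that the cross-term $\varepsilon t^\alpha|x|^\beta$ in $w$ dominates all polynomial factors in $|x|$ uniformly for $t\in(0,1]$.
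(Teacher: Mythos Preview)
Your overall strategy---verify the hypotheses of the general machinery with explicit $t$-dependent constants, then feed everything into Theorem~\ref{Thm: weighted-gradient estimate} on an interval of width $\sim t$---matches the paper's approach. However, there is one concrete error that would make the argument fail as written.

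You propose $Z(x)=1+|x|^r$ as the (time-independent) Lyapunov function from Hypothesis~\ref{h.1}. This is incompatible with two requirements. First, the very definition of a time-dependent Lyapunov function demands $W_j\le Z$, and an exponential $W_j(t,x)=e^{\varepsilon_j t^\alpha|x|_*^\beta}$ cannot be dominated by a polynomial $Z$. Second, Hypothesis~\ref{h.4}(b) asks for $W_2\le c_0 Z^{1-\sigma}$ with $\sigma\in(0,1)$; again this fails for the same reason, so your sentence ``since $\beta=(s-m+2)/2>0$ and $0<\varepsilon_2<1/\beta$, we also get $W_2\le c_0 Z^{1-\sigma}$'' is not correct. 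The paper takes $Z(x)=e^{\varepsilon_2|x|_*^\beta}$ (and $Z_0(x)=\exp(\varepsilon_2|x|_*^{p+1-m})$ for the operator without potential), and then the required inequalities $W_j\le Z$ and $W_2\le c_0 Z^{1-\sigma}$ are immediate from $\varepsilon_j t^\alpha\le\varepsilon_2$ on $(0,1)$. One then has to check $AZ\le M$ for this exponential $Z$, which is where the condition $\varepsilon_2<1/\beta$ actually enters (via the balance between the diffusion/drift terms and the potential term $-|x|^s Z$); the paper outsources this computation to \cite[Proposition~3.3]{CKPR}.

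A smaller point: your final line invokes ``$w(t,y)^{-1}\le e^{-\varepsilon t^\alpha|x|_*^\beta}$'' via some unspecified symmetry. The estimate $|w(t,y)\nabla p(t,x,y)|\le K$ from Theorem~\ref{Thm: weighted-gradient estimate} yields decay in the variable $y$, not $x$; the exponential factor $e^{-\varepsilon t^\alpha|x|_*^\beta}$ in the statement is most naturally read as $e^{-\varepsilon t^\alpha|y|_*^\beta}$ (the paper's own proof simply bounds $K$ and does not revisit the exponential). You should not try to swap $x$ and $y$ here without an explicit argument.
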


\begin{proof}
\textit{Step 1.} We show that $W_1$ and $W_2$ are time dependent Lyapunov functions for $L=\partial_t+A$ with respect to the function
\[Z(x)=e^{\varepsilon_2 |x|_*^\beta}.\]
For that, we apply \cite[Proposition 3.3]{CKPR}.
Let $|x|\geq 1$ and set $G_j=\sum_{i=1}^d D_i q_{ij} = m |x|^{m-2}x_j.$
Since $s>|m-2|$, we have $\beta>(2-m)\vee 0$. 
Moreover,
\begin{align*}
|x|^{1-\beta-m} \left((G+F)\cdot \frac{x}{|x|}-\frac{V}{\varepsilon_j \beta |x|^{\beta -1}}\right)
&=  |x|^{1-\beta-m} \left(m |x|^{m-1}-|x|^p - \frac{|x|^s}{\varepsilon_j \beta |x|^{\beta -1}}\right)\\
&\leq m|x|^{-\beta} - \frac{1}{\varepsilon_j \beta}.
\end{align*}
If $|x|$ is large enough, for example $|x|\ge K$ with
$$K>\left( \frac{m}{\frac{1}{\varepsilon_j \beta}-1}\right)^\frac{1}{\beta}, $$
we get
\begin{align*}
|x|^{1-\beta-m} \left((G+F)\cdot \frac{x}{|x|}-\frac{V}{\varepsilon_j \beta |x|^{\beta -1}}\right)
\leq m|x|^{-\beta} - \frac{1}{\varepsilon_j \beta}
\leq m K^{-\beta}- \frac{1}{\varepsilon_j \beta}
<   -1,
\end{align*}
where we have used that $\varepsilon_j <\frac{1}{\beta}$.
In addition, we have
$$\lim_{\abs{x}\to\infty} V(x) \abs{x}^{2-2\beta-m}=\lim_{\abs{x}\to\infty} \abs{x}^{2-2\beta-m+s}=1.$$ 
Hence, $\lim_{\abs{x}\to\infty} V(x) \abs{x}^{2-2\beta-m}>c$ for any $c<1$. Consequently, by \cite[Proposition 3.3]{CKPR} we obtain that $W_1$ and $W_2$ are time dependent Lyapunov functions for $L=\partial_t+A$.
Similar computations show that the functions $Z(x)$ and $Z_0(x)=\exp (\varepsilon_2 |x|_*^{p+1-m})$ satisfy, respectively, Hypothesis \ref{h.1}(b) and (c).

\textit{Step 2.} We now show that $A$ satisfies Hypotheses \ref{h.3} and \ref{h.4}.
Fix $T=1,\,x\in \R^d$, $0<a_0<a<b<b_0<T$ and  $k>2(d+2)$.  
Let $(t,y)\in [a_0,b_0]\times\R^d$. Clearly, Hypothesis \ref{h.3}\textit{(a)}-\textit{(b)} and Hypothesis \ref{h.4}\textit{(b)} are satisfied. We assume that $|y|\geq 1$; otherwise, in a neighborhood of the origin, all the quantities we are going to estimate are obviously bounded.

First, since $2\varepsilon<\varepsilon_1$, we infer that
\[
w\le c_1w^{\frac{k-2}{k}}W_1^{\frac{1}{k}}
\]
with $c_1=1$. Second, we have
\begin{align}\label{eq1: Thm: weighted-gradient estimate in polinomial case}
\frac{|Q(y)\nabla w(t,y)|}{W_1(t,y)^{\frac{1}{2k}}}
&= \varepsilon \beta t^\alpha |y|^{\beta-1}(1+|y|^m) e^{-\frac{1}{2k}(\varepsilon_1-2k\varepsilon)t^\alpha |y|^\beta}\notag\\
&\leq 2\varepsilon \beta t^\alpha |y|^{\beta+m-1} e^{-\frac{1}{2k}(\varepsilon_1-2k\varepsilon)t^\alpha |y|^\beta}.
\end{align}
We make use of the following remark: since the function $t\mapsto t^p e^{-t}$ on $(0,\infty)$ attains its maximum at the point $t=p$, then for $\tau, \gamma, z>0$ we have
\begin{equation}\label{eq2: Thm: weighted-gradient estimate in polinomial case}
z^\gamma e^{-\tau z^\beta}
=\tau^{-\frac{\gamma}{\beta}}(\tau z^\beta)^\frac{\gamma}{\beta} e^{-\tau z^\beta}
\leq \tau^{-\frac{\gamma}{\beta}} \left( \frac{\gamma}{\beta}\right)^\frac{\gamma}{\beta} e^{-\frac{\gamma}{\beta}}
=: C(\gamma,\beta) \tau^{-\frac{\gamma}{\beta}}.
\end{equation}
Applying \eqref{eq2: Thm: weighted-gradient estimate in polinomial case} to the inequality \eqref{eq1: Thm: weighted-gradient estimate in polinomial case} with $z=|y|$, $\tau=\frac{1}{2k}(\varepsilon_1-2k\varepsilon)t^\alpha$, $\beta=\beta$ and $\gamma=\beta+m-1>0$ yields
\begin{align*}
\frac{|Q(y)\nabla w(t,y)|}{W_1(t,y)^{\frac{1}{2k}}}
\leq 2C(\beta+m-1, \beta)\varepsilon \beta t^\alpha \left[\frac{1}{2k}(\varepsilon_1-2k\varepsilon)t^\alpha\right]^{-\frac{\beta+m-1}{\beta}} 
\leq \overline{c} t^{-\frac{\alpha (m-1)^+}{\beta}}
\leq \overline{c} a_0^{-\frac{\alpha (m-1)^+}{\beta}}.
\end{align*}
Thus, we choose $c_2=\overline{c} a_0^{-\frac{\alpha (m-1)^+}{\beta}}$, where $\overline{c}$ is a universal constant.
In a similar way,
\begin{align*}
\frac{|Q(y)D^2 w(t,y)|}{W_1(t,y)^{\frac{1}{k}}}
&=\frac{(1+|y|^m)|D^2 w(t,y)|}{W_1(t,y)^{\frac{1}{k}}}\\
&\leq 2\sqrt{3}\varepsilon \beta t^\alpha \left[\left((\beta-2)^++\sqrt{d}\right) |y|^{\beta+m-2}+\varepsilon\beta t^{\alpha} |y|^{2\beta +m-2} \right] e^{-\frac{1}{k}(\varepsilon_1-k\varepsilon)t^\alpha |y|^\beta}. 
\end{align*}
Applying \eqref{eq2: Thm: weighted-gradient estimate in polinomial case} to each term, we get
\begin{align*}
\frac{|Q(y)D^2 w(t,y)|}{W_1(t,y)^{\frac{1}{k}}}
\leq  &C(\beta, m) \varepsilon \beta t^\alpha \left \lbrace  \left((\beta-2)^++\sqrt{d}\right)  \left[\frac{1}{k}(\varepsilon_1-k\varepsilon)t^\alpha\right]^{-\frac{\beta+m-2}{\beta}}\right. \\
&\left. +\varepsilon\beta t^{\alpha}\left[\frac{1}{k}(\varepsilon_1-k\varepsilon)t^\alpha\right]^{-\frac{2\beta+m-2}{\beta}}\right\rbrace
 \leq  \overline{c} t^{-\frac{\alpha(m-2)}{\beta}}\leq \overline{c} a_0^{-\frac{\alpha (m-2)^+}{\beta}}.
\end{align*}
Therefore, we pick $c_3=\overline{c} a_0^{-\frac{\alpha (m-2)^+}{\beta}}$. Furthermore, if we consider $t_0\in (0,t)$, we have
\begin{align*}
\frac{|Q(y)| |\nabla W_1(t_0,y)|}{ W_1(t_0,y) w(t,y)^{-1/k}W_1(t,y)^{1/2k}}
&=\sqrt{d} \beta\varepsilon_1 t_0^\alpha (1+|y|^m) |y|^{\beta-1}e^{-\frac{1}{2k}(\varepsilon_1-2\varepsilon)t^\alpha |y|^\beta}\\
&\leq  \overline{c} t^{-\alpha\frac{m-1}{\beta}}
\leq  \overline{c} a_0^{-\alpha\frac{(m-1)^+}{\beta}}
=: c_{12},
\end{align*}
where we used \eqref{eq2: Thm: weighted-gradient estimate in polinomial case}. We can proceed in the same way to check the remaining inequalities.
To sum up, the constants $c_1, \dots, c_{12}$ are the following:
\begin{align*}
&c_1=1, 
&&c_2=c_7=c_{12}=\overline{c} a_0^{-\frac{\alpha (m-1)^+}{\beta}},
&&c_3= \overline{c} a_0^{-\frac{\alpha (m-2)^+}{\beta}},\\
&c_4=c_{11}=\overline{c} a_0^{-1},
&&c_5=\overline{c}a_0^{-\frac{\alpha s}{2\beta}},
&&c_6=\overline{c}a_0^{-\frac{\alpha p}{\beta}},\\
&c_8=\overline{c} a_0^{-\frac{\alpha (p-1)}{\beta}},
&&c_9=\overline{c} a_0^{-\frac{\alpha (s-1)^+}{\beta}},
&&c_{10}=\overline{c}.
\end{align*}
\textit{Step 3.} We are now ready to apply Theorem \ref{Thm: weighted-gradient estimate}. 
To that end, we choose $a_0=\frac{t}{2}$, $a=t$, $b=\frac{3}{2}t$ and $b_0=2t$. If we now set $\lambda= m\vee p\vee \frac{s}{2}$, since $\alpha>\frac{\beta}{\beta+m-2}$, $s>|m-2|$ and $\beta=\frac{s-m+2}{2}$, we have
\[
\frac{\alpha\lambda}{\beta}>\frac{s}{2(\beta+m-2)}= \frac{s}{s+m-2}>\frac{1}{2}.
\]
Hence we can estimate the constants $A_1$ and $A_3$ in \eqref{def: constants A_i} as follows
\begin{align}\label{eq3: Thm: weighted-gradient estimate in polinomial case}
&A_1=c_1^{\frac{k}{2}}=1,\notag\\
&A_3
=c_6^k + c_2^\frac{k}{2} c_6^\frac{k}{2}+ c_5^k
=\overline{c} \left( t^{-\frac{\alpha pk}{\beta}} + t^{-\frac{\alpha((m-1)^++ p)k}{2\beta}}+t^{-\frac{\alpha sk}{2\beta}}\right)
\leq \overline{c} t^{-\frac{\alpha \lambda k}{\beta}}.
\end{align}
Similarly, if we consider the remaining constants in the right hand side of \eqref{RHS weighted-gradient estimate for p} we obtain that
\begin{align}\label{eq4: Thm: weighted-gradient estimate in polinomial case}
&\tilde{A}_2 \leq \overline{c} \left( t^{-\frac{\alpha \lambda k}{\beta}}+ t^{-\frac{k}{2}}\right),
&&B_1\leq\overline{c} t^{-\frac{\alpha \lambda }{\beta}},
&&B_2\leq \overline{c} t^{-\frac{\alpha \lambda}{\beta}-1},\notag\\
&B_3\leq \overline{c} t^{-\frac{3\alpha \lambda}{\beta}},
&&\tilde{B}_4\leq \overline{c}t^{-\frac{2\alpha \lambda}{\beta}},
&&B_5\leq \overline{c}t^{-\frac{2\alpha \lambda}{\beta}},\notag\\
&\tilde{B}_6\leq \overline{c}t^{-\frac{\alpha \lambda k}{\beta}},
&&B_7\leq \overline{c}t^{-\frac{\alpha \lambda k}{\beta}},
&&B_8\leq \overline{c}t^{-\frac{\alpha \lambda }{\beta}}.
\end{align}
Moreover, by \cite[Proposition 3.3]{CKPR}, there are two constants $H_1$ and $H_2$ not depending on $a_0$ and $b_0$ such that $\xi_{W_j}(t,x)\leq H_j$ for all $(s,x)\in [0,1]\times\R^d$, so for $j=1,2$ we have
\begin{equation}\label{eq5: Thm: weighted-gradient estimate in polinomial case}
\Xi_j(a_0,b_0)=\int_{a_0}^{b_0}\xi_{W_j}(t,x)\,dt
\leq H_j(b_0-a_0)= \frac{3t}{2} H_j.
\end{equation}
Furthermore, by Corollary \ref{Rmk: constants in h.2 under h.3}, we obtain
\[
p(t,x,y)\leq C t^{1-\frac{\alpha \lambda k}{\beta}} e^{-\varepsilon t^\alpha |y|_*^\beta}.
\] 
Then,
\begin{align*}
p(t,x,y)\log p(t,x,y)
&\leq C t^{1-\frac{\alpha \lambda k}{\beta}}  \left[ \log C + \left(1-\frac{\alpha \lambda k}{\beta}\right)\log t -\varepsilon t^\alpha |y|_*^\beta\right]e^{-\varepsilon t^\alpha |y|_*^\beta}\\
& \leq C  (1-\log t)t^{1-\frac{\alpha \lambda k}{\beta}} e^{-\varepsilon t^\alpha |y|_*^\beta} .
\end{align*}
Considering that $a=t$ and $b=\frac{3}{2}t$, it leads to
\begin{align}\label{eq6: Thm: weighted-gradient estimate in polinomial case}
\int_{\R^d} [p(t,x,y)\log p(t,x,y)]_{t=a}^{t=b}\,dy
&\leq C  (1-\log t) t^{1-\frac{\alpha \lambda k+\alpha}{\beta}}  \int_{\R^d} e^{-\varepsilon  |z|_*^\beta} dz \notag\\
&\leq C  (1-\log t) t^{1-\frac{\alpha \lambda k+\alpha}{\beta}} ,
\end{align}
where in the integrals we performed the change of variables $z=a^\frac{\alpha}{\beta}y$ and $z=b^\frac{\alpha}{\beta}y$.
We also get
\begin{align}\label{eq7: Thm: weighted-gradient estimate in polinomial case}
\int_{Q(a,b)}p(t,x,y)\log^2 p(t,x,y)\, dt\, dy
\leq C (1-\log t)^2 t^{2-\frac{\alpha \lambda k+\alpha}{\beta}}.
\end{align}
Putting \eqref{eq3: Thm: weighted-gradient estimate in polinomial case}-\eqref{eq7: Thm: weighted-gradient estimate in polinomial case} in \eqref{RHS weighted-gradient estimate for p} yields
\[
K\leq  C(1-\log t) t^{\frac{3}{2}-\frac{3\alpha \lambda k+\alpha}{2\beta}}. \qedhere
\]
\end{proof}

\end{document}